\numberwithin{equation}{section}
\newtheorem{thm}{Theorem}[section]
\newtheorem{lem}[thm]{Lemma}
\newtheorem{prop}[thm]{Proposition}
\newtheorem{cor}[thm]{Corollary}
\newtheorem{rem}[thm]{Remark}
\newtheorem{exam}[thm]{Example}
\newtheorem{exam-nota}[thm]{Example-Notation}
\newtheorem{nota}[thm]{Notation}
\newtheorem{dfn}[thm]{Definition}
\newtheorem{dfn-nota}[thm]{Definition-Notation}
\newtheorem{dfn-lem}[thm]{Lemma-Definition}
\newtheorem{dfn-thm}[thm]{Theorem-Definition}
\theoremstyle{definition}
\newtheorem*{ack}{Acknowledgements}
\newcommand{\beqa}{\begin{eqnarray*}}
\newcommand{\eeqa}{\end{eqnarray*}}
\newcommand{\fa}{\mbox{${\mathfrak a}$}}
\newcommand{\fg}{\mbox{${\mathfrak g}$}}
\newcommand{\fh}{\mbox{${\mathfrak h}$}}
\newcommand{\fz}{\mbox{${\mathfrak z}$}}
\newcommand{\fm}{\mbox{${\mathfrak m}$}}
\newcommand{\C}{\mbox{${\mathbb C}$}}
\newcommand{\Ad}{{\rm Ad}}
\newcommand{\ad}{\text{ad}}
\newcommand{\Co}{\mathbb{C}}
\newcommand{\orbx}{\mathcal{O}_{x}}
\newcommand{\fgl}{\mathfrak{gl}}
\newcommand{\fso}{\mathfrak{so}}
\newcommand{\xifij}{\xi_{f_{i,j}}}
\newcommand{\sreg}{\fg_{n}^{sreg}}
\newcommand{\fgn}{\fg_{n}}
\newcommand{\fibre}{(\mathfrak{g}_{n})_{c}}
\newcommand{\sfibre}{(\mathfrak{g}_{n})_{c}^{sreg}}
\newcommand{\fgno}{(\mathfrak{g}_{n})_{\Omega}}
\newcommand{\Gprod}{Z_{GL(1)}(J_{1})\times\cdots\times Z_{GL(n-1)}(J_{n-1})}
\newcommand{\sol}{\Xi^{i}_{c_{i}, \, c_{i+1}}}
\newcommand{\glfibre}{\fgl(n)_{c}}
\newcommand{\glsfibre}{\fgl(n)_{c}^{sreg}}
\newcommand{\ofibre}{\fso(n)_{c}}
\newcommand{\glnil}{\fgl(n)_{0}}
\newcommand{\glsnil}{\fgl(n)_{0}^{sreg}}
\newcommand{\soll}{\Xi^{2l+1}_{c_{2l+1}, c_{2l+2}}}
\begin{document}

\title[]{The Gelfand-Zeitlin Integrable System and Its Action on Generic Elements of $\fgl(n)$ and $\fso(n)$}

\author[M. Colarusso]{Mark Colarusso}
\address{Department of Mathematics, University of Notre Dame, Notre Dame, IN, 46556}
\email{mcolarus@nd.edu}
\subjclass[2000]{ Primary 14L30, 14R20, 37K10, 53D17} 
\maketitle

\begin{abstract}
In recent work Bertram Kostant and Nolan Wallach ([KW1], [KW2]) have defined an interesting action of a simply connected Lie group $A\simeq \mathbb{C}^{{n\choose 2}}$ on $\fgl(n)$ using a completely integrable system derived from Gelfand-Zeitlin theory.   In this paper we show that an analogous action of $\Co^{d}$ exists on the complex orthogonal Lie algebra $\fso(n)$, where $d$ is half the dimension of a regular adjoint orbit in $\fso(n)$.  In [KW1],  Kostant and Wallach describe the orbits of $A$ on a certain Zariski open subset of regular semisimple elements in $\fgl(n)$.  We extend these results to the case of $\fso(n)$.  We also make brief mention of the author's results in [Col1], which describe all $A$-orbits of dimension ${n\choose 2}$ in $\fgl(n)$.  
\end{abstract}

\section{Introduction}\label{s:intro}
Let $\fg_{n}$ be the complex general linear Lie algebra $\fgl(n, \Co)$ or the complex orthogonal Lie algebra $\fso(n, \Co)$.  We think of $\fso(n)$ as the Lie algebra of $n\times n$ complex skew-symmetric matrices.  Let $d$ be half the dimension of a regular adjoint orbit in $\fg_{n}$.  In this paper, we describe the construction of an analytic action of $\Co^{d}$ on $\fgn$ using a Lie algebra of commutative vector fields derived from Gelfand-Zeitlin theory.  We then describe the action of $\Co^{d}$ on a Zariski open subset of regular semisimple elements in $\fg_{n}$.  For the case of $\fgn=\fgl(n)$, these results were proven in recent work of Kostant and Wallach in [KW1].  In the case of $\fgn=\fso(n)$, these results are new.  They first appeared in the author's doctoral thesis [Col].  

The paper is structured as follows.  In section \ref{s:intro}, we give an exposition of the work of Kostant and Wallach in [KW1].  We indicate how their results generalize to the case of $\fso(n)$ providing new proofs where necessary.   In section \ref{s:generics}, we describe the action of the group $\Co^{d}$ on a certain Zariski open subset of regular semisimple elements in $\fgn$.  The results for the case of $\fgn=\fgl(n)$ are contained in Theorems 3.23 and 3.28 in [KW1].  In section \ref{s:glngenerics}, we indicate a different proof of these results, which more readily generalizes to the case of $\fgn=\fso(n)$.  The proof in section \ref{s:glngenerics} is taken from some preliminary work of Kostant and Wallach.  For the case of $\fso(n)$, we give complete proofs of the analogues of Theorems 3.23 and 3.28 in [KW1] in section \ref{s:orthogen}.  Section \ref{s:summary} summarizes some of the other main results of the author's doctoral thesis without proof.  These results are to appear in an upcoming publication [Col1].  

We now briefly summarize the main results of each section.  To construct the action of $\Co^{d}$ on $\fgn$, we consider the Lie-Poisson structure on $\fgn\simeq\fgn^{*}$.  Let $\fg_{i}=\fgl(i),\text{ or } \fso(i)$ for $1\leq i\leq n$.  Then $\fg_{i}\subset\fgn$ is a subalgebra, where we think of an $i\times i$ matrix as the top left hand corner of an $n\times n$ matrix with all other entries zero.  Let $P(\fg_{i})$ be the algebra of polynomials on $\fg_{i}$.  Any polynomial $f\in P(\fg_{i})$ defines a polynomial on $\fgn$ as follows. For $x\in\fgn$, let $x_{i}$ denote the $i\times i$ submatrix in the top left hand corner of $x$.  Then one can show that $f(x)=f(x_{i})$.  Let $P(\fg_{i})^{G_{i}}=\Co[f_{i,1},\cdots ,f_{i, r_{i}}], \, r_{i}=rank(\fg_{i})$ denote the ring of $\Ad$-invariant polynomials on $\fg_{i}$.  In section \ref{s:GZa}, we will see that the functions $\{ f_{i,j} | 1\leq i\leq n, 1\leq j\leq r_{i}\}$ Poisson commute and in section \ref{s:generics}, we will show that they are algebraically independent.  These observations along with the surprising fact that the sum


\begin{equation}\label{eq:sumranks}
\sum_{i=1}^{n-1} r_{i}=d
\end{equation}
gives us that the functions $\{f_{i,j} | 1\leq i\leq n-1, \, 1\leq j\leq r_{i}\}$ form a completely integrable system on certain regular adjoint orbits.    

\begin{rem}
Note for $\fg_{n}=\fso(n)$, $\fg_{1}=\fso(1)=0$, so that $r_{1}=0$.  Thus, the first function in the collection $\{f_{i,1}, \cdots , f_{i, r_{i}}, \,1\leq i\leq n,\, 1\leq j\leq r_{i}\}$  is $f_{2,1}$.  We will retain this convention throughout the paper.
\end{rem}
We make a choice of generators for the ring of $\Ad$-invariant polynomials $P(\fg_{i})^{G_{i}}$.  If $\fg_{i}=\fgl(i)$, we take as generators
\begin{equation}\label{eq:ainvars}
f_{i,j}(x)=tr(x_{i}^{j}) \text{ for } 1\leq i\leq n \text{ and } 1\leq j\leq i.
\end{equation}
 For $\fg_{i}=\fso(i)$, we have to consider two cases.  If $\fg_{i}=\fso(2l) $ is of type $D_{l}$, we take 
 \begin{equation}\label{eq:Dinvars}
  f_{i,j}(x)=tr(x_{i}^{2j})\text{ for } 1\leq j\leq l-1
\text { and }  f_{i,l}(x)=Pfaff(x_{i}),
\end{equation} where $Pfaff(x_{i})$ denotes the Pfaffian of $x_{i}$.  If $\fg_{i}=\mathfrak{so}(2l+1,\mathbb{C})$ is of type $B_{l}$, we take  
\begin{equation}\label{eq:Binvars}
f_{i, j}(x)=tr(x_{i}^{2j})\text{ for } 1\leq j\leq l.
\end{equation}

Let $\fa$ be the Lie algebra of vector fields on $\fgn$ generated by the Hamiltonian vector fields $\xifij$ for the functions $\{f_{i,j}| \,  1\leq i\leq n-1, \, 1\leq j\leq r_{i}\}$.  In [KW1], it is shown that $\fa$ integrates to an action of $\Co^{d}=\Co^{n\choose 2}$ on $\fgn=\fgl(n)$.  The following theorem appears in section \ref{s:GZg} where we give a general proof that also covers the case of $\fgn=\fso(n)$.  
\begin{thm}\label{thm:iGZgroup} 
The Lie algebra $\fa$ integrates to an analytic action of $A=\Co^{d}$ on $\fg_{n}$.    
\end{thm}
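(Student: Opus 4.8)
The plan is to show that each Hamiltonian vector field $\xifij$ is complete on $\fgn$, and that the resulting one-parameter flows commute, so that the abelian Lie algebra $\fa \simeq \Co^d$ exponentiates to a genuine $\Co^d$-action by analytic diffeomorphisms. The commutativity of the flows is essentially a formal consequence of the fact, recalled in the excerpt (to be established in section \ref{s:GZa}), that the functions $\{f_{i,j}\}$ Poisson commute: Poisson-commuting Hamiltonians have commuting Hamiltonian vector fields, so any two flows in the collection commute wherever both are defined. Thus the entire burden of the proof is \emph{completeness} of the individual vector fields $\xifij$; granting that, one sets $A = \Co^d$ with coordinates dual to the chosen generators, defines the action of $(t_{i,j}) \in \Co^d$ as the composition of the time-$t_{i,j}$ flows of the $\xifij$ (the order being irrelevant by commutativity), and checks real-analyticity and the homomorphism property, which are automatic once completeness and commutativity are in hand.

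To prove completeness I would exploit the block structure of the construction. The key observation is that $\xi_{f_{i,j}}$ depends only on the top-left $i\times i$ corner: since $f_{i,j}(x) = f_{i,j}(x_i)$, the Hamiltonian vector field $\xi_{f_{i,j}}$ at $x$ is $\ad^*$-type, of the form $x \mapsto [\,x, \nabla f_{i,j}(x)\,]$ where $\nabla f_{i,j}(x)$ is a polynomial function of $x_i$ only and lands in $\fg_i$. First I would record the explicit formula for $\xifij$ (for $\fgl$, $\nabla f_{i,j}(x)$ is essentially $x_i^{\,j-1}$ placed in the top-left corner; for $\fso$ similarly in terms of odd powers of $x_i$, with the Pfaffian handled separately). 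Then I would argue that the flow of $\xifij$ preserves each ``corner'' filtration: because $f_{k,l}$ for $k \le i$ Poisson commutes with $f_{i,j}$, the functions $f_{k,l}$, $k\le i$, are constant along the flow, and in particular the corner $x_i$ moves inside a single (compact-type is false here, but \emph{closed}) adjoint orbit of $G_i$ determined by its invariants. The crucial point is that the flow line of $\xifij$ through $x$ stays in the fiber of the moment map $(f_{k,l})_{k\le n}$ over its value at $x$, and I would show this fiber is a \emph{closed} affine subvariety of $\fgn$; a flow of a polynomial vector field that is confined to a closed subvariety on which the vector field has at most linear growth (here $\xifij(x) = [x, (\text{bounded-degree polynomial in } x_i)]$, and $x_i$ ranges over a closed orbit, hence in a set where the relevant polynomial is bounded on bounded sets in a controlled way) cannot escape to infinity in finite time.

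The step I expect to be the main obstacle is precisely this completeness estimate: \emph{a priori} a polynomial vector field on $\Co^N$ need not be complete, so one genuinely needs the geometry. The cleanest route, which I would pursue, is the one suggested by Kostant--Wallach: show that along the flow of $\xifij$ the corner $x_i$ itself satisfies a \emph{linear} (indeed isospectral, Lax-type) ODE $\dot{x_i} = [x_i, Q(x_i)]$ whose coefficients are frozen by the conserved quantities, hence $x_i(t)$ is conjugate to $x_i(0)$ for all $t$ and in particular stays bounded; then the full ODE for $x$ becomes $\dot x = [x, Q(x_i(t))]$ with $Q(x_i(t))$ a bounded (in fact $\ad$-nilpotent-or-semisimple, norm-conserved) time-dependent matrix, which is a linear ODE in $x$ and therefore has solutions for all $t \in \Co$. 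Global existence for the linear system gives completeness, commutativity gives the group law, and polynomial dependence on initial data and time gives analyticity, completing the proof that $\fa$ integrates to an action of $A = \Co^d$.
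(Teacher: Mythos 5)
Your overall plan is correct, and your ``cleanest route'' is very close to the paper's proof, but it stops just short of the observation that actually makes the argument close.  You write that along the flow of $\xifij$ the corner satisfies a Lax-type ODE $\dot{x_i} = [x_i, Q(x_i)]$ with $Q = \pm\nabla f_{i,j}$, and you then assert that $x_i(t)$ ``stays bounded'' and that the coefficients are ``frozen by the conserved quantities.''  Both claims have gaps as stated.  First, over $\Co$, conjugacy does not imply boundedness: regular adjoint $G_i$-orbits in $\fg_i$ are unbounded affine varieties, so isospectrality of $x_i(t)$ gives no a priori bound.  Second, $Q(x_i) = \nabla f_{i,j}(x_i)$ is a polynomial in $x_i$ itself (not merely in its $\Ad$-invariants), so conservation of $f_{k,l}(x_i)$ does not by itself freeze $Q(x_i)$; and, logically prior to all this, the Lax ODE for $x_i$ is nonlinear, so you cannot yet invoke global existence for $x_i(t)$ before turning the ODE for $x$ into a linear one.

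The fact you are missing, which is the heart of the paper's proof, is that $\nabla f_{i,j}(x_i) \in \fz_{\fg_i}(x_i)$ (the gradient of an $\Ad$-invariant at $y$ centralizes $y$).  Hence the Lax term vanishes identically, $\dot{x_i} = [x_i, \nabla f_{i,j}(x_i)] = 0$, and $x_i(t) \equiv x_i(0)$ exactly, not just up to conjugacy.  With $x_i$ literally constant, $\nabla f_{i,j}(x_i)$ is a fixed matrix, and one can write the integral curve in closed form as
\[
\theta(t,x) = \Ad\bigl(\exp(-t\,\nabla f_{i,j}(x_i))\bigr)\cdot x ,
\]
which is defined for all $t\in\Co$ by inspection; one verifies by differentiating that this is indeed the flow of $\xifij$.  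Completeness is then free, and your remarks about commutativity of flows and assembling the $\Co^d$-action go through exactly as you describe.  So your proposal is repaired by replacing the boundedness/linear-ODE estimate with the one algebraic identity $[\,x_i,\nabla f_{i,j}(x_i)\,]=0$, which collapses the whole analytic difficulty.
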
 
We call this group $A=\Co^{d}$ following the notation of [KW1].

We call an element $x\in\fg_{n}$ strongly regular if its orbit under the group $A$ of Theorem \ref{thm:iGZgroup} is of maximal dimension $d$.  It is not difficult to see that if $x$ is strongly regular, then $x$ is regular, and its $A$-orbit is a Lagrangian submanifold of the adjoint orbit containing $x$.  (See Proposition \ref{prop:lag} and Remark \ref{r:sreg} in section \ref{s:sreg}.)  

In section \ref{s:generics}, we describe the $A$-orbit structure of a Zariksi open set of regular semisimple elements defined by 
$$
\fgno=\{x\in\fgn |\;x_{i}\text { is regular semisimple}, \, \sigma(x_{i-1})\cap \sigma (x_{i})=\emptyset, \, 2\leq i\leq n-1\},
$$
where for $y\in\fg_{i}$, $\sigma(y)$ denotes the spectrum of $y$ regarded as an element of $\fg_{i}$.  To study the action of $A$ on $\fgno$, it is helpful to study the action of $A$ on a certain class of fibres of the corresponding moment map.  We denote the moment map by $\Phi:\fgn\to \Co^{d+r_{n}}$,
\begin{equation}\label{eq:imom}
\Phi(x)=(f_{1,1}(x_{1}), f_{2,1}(x_{2}),\cdots, f_{n, r_{n}}(x))
\end{equation}
for $x\in\fgn$.  For $c\in \Co^{d+r_{n}}$, we denote the fibres of $\Phi$ by $\Phi^{-1}(c)=\fibre$.  To define these special fibres, we consider a Cartan subalgebra $\fh_{i}\subset \fg_{i}$, and we let $W_{i}$ be the Weyl group with respect to $\fh_{i}$.  We can identify the orbit space $\fh_{i}/W_{i}$ with $\Co^{r_{i}}$ via the map
\begin{equation}\label{eq:worb}
[h]_{W_{i}}\to (f_{i,1}(h),\cdots , f_{i, r_{i}}(h)),
\end{equation}
where $[h]_{W_{i}}$ denotes the $W_{i}$ orbit of $h\in\fh_{i}$.  Using this identification, we can think of the moment map in (\ref{eq:imom}) as a map $\fgn\to \fh_{1}/W_{1}\times\cdots \times \fh_{n}/W_{n}$.  We define $\Omega_{n}\subset \fh_{1}/W_{1}\times\cdots \times \fh_{n}/W_{n}$ to be the subset of $c=(c_{1},\cdots, c_{n})\in\fh_{1}/W_{1}\times\cdots \times \fh_{n}/W_{n}$ with the property that $c_{i}\in \fh_{i}/W_{i}$ is regular and the elements in the orbits $c_{i}$ and $c_{i+1}$ have no eigenvalues in common.  We can understand the action of $A$ on $\fgno$ by analyzing its action on the fibres $\fibre$ for $c\in \Omega_{n}$.  The main theorem concerning the orbit structure of the set $\fgno$ is Theorem \ref{thm:generics}, which is given in section \ref{s:generics}.
\begin{thm} \label{thm:igenerics}
The elements of $\fgno$ are strongly regular.  For $c\in\Omega_{n}$, the fibre $\fibre$ is precisely one orbit under the action of the group $A$ given in Theorem \ref{thm:iGZgroup}.  Moreover, $\fibre$ is a homogenous space for a free, algebraic action of the $d$-dimensional torus $(\Co^{\times})^{d}$.  
\end{thm}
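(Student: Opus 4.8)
The plan is to establish, by induction on $n$, the following fibrewise strengthening: for every $c\in\Omega_n$ the fibre $\fibre$ is non-empty, is a single $A$-orbit of dimension $d$, and carries a free algebraic $(\Co^\times)^d$-action making it a homogeneous space. Since $\fgno$ is the union of the fibres $\fibre$ over $c\in\Omega_n$, and since by definition an element is strongly regular exactly when its $A$-orbit has dimension $d$, the first two assertions of the theorem follow at once from this statement, and the third is part of it. The base case ($n=2$, or $n=1$) is a direct computation: for $c\in\Omega_2$ one checks that $\fibre$ is the set of off-diagonal border parameters with a prescribed nonzero product, hence a single free $\Co^\times$-orbit.

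For the inductive step I would study the projection $\pi\colon\fibre\to(\fg_{n-1})_{c'}$, $x\mapsto x_{n-1}$, where $c'=(c_1,\dots,c_{n-1})$ and $c'\in\Omega_{n-1}$; here $(\fg_{n-1})_{c'}$ denotes the analogous fibre for $\fg_{n-1}$, which by the inductive hypothesis is a single orbit of the analogue $A_{n-1}\cong\Co^{\,d-r_{n-1}}$, of dimension $d-r_{n-1}$, and a free homogeneous $(\Co^\times)^{\,d-r_{n-1}}$-space. First I would describe a single fibre $\pi^{-1}(z)$, $z\in(\fg_{n-1})_{c'}$. Writing $x$ in block form with fixed corner $x_{n-1}=z$, the invariants $f_{i,j}(x)$ with $i\le n-1$ are already pinned down by $z$, so $\pi^{-1}(z)$ is cut out only by the requirement that $x=x_n$ have the prescribed invariants $c_n$. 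Diagonalising the regular semisimple element $z$ and expanding the characteristic polynomial of $x$ by the Schur complement of the $(n,n)$-corner, one obtains a partial-fraction identity from which the hypothesis $\sigma(z)\cap\sigma(c_n)=\emptyset$ forces each rank-one parameter $u_iv_i$ (resp.\ $u_i^{2}$ in the $\fso$ case) to equal an explicit nonzero constant determined by $c$; hence $\pi^{-1}(z)$ is a single free orbit under conjugation by the maximal torus $Z_{G_{n-1}}(z)\cong(\Co^\times)^{r_{n-1}}$. In particular $\pi$ is surjective with fibres of dimension $r_{n-1}$, so $\dim\fibre=d$.

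The second step is to recognise these torus orbits inside the $A$-action. The Hamiltonian flows of the $f_{i,j}$ with $i\le n-2$ are conjugations by elements of $G_{n-2}$ determined by $x_{n-2}$, hence commute with $\pi$ and project to the flows generating $A_{n-1}$, which by induction acts transitively on $(\fg_{n-1})_{c'}$. The flows of $f_{n-1,j}$, $1\le j\le r_{n-1}$, are conjugations by $\exp\!\big(t\,\nabla f_{n-1,j}(x_{n-1})\big)\in Z_{G_{n-1}}(x_{n-1})$; since $z$ is regular, the gradients $\nabla f_{n-1,j}(z)$ span $\operatorname{Lie}(Z_{G_{n-1}}(z))$, so these flows preserve $\pi$ and sweep out the full torus on each fibre. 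Combining the two families shows $A$ is transitive on $\fibre$ (first adjust the $x_{n-1}$-component, then move within the fibre), so $\fibre$ is a single $A$-orbit of dimension $d$ and every element of $\fgno$ is strongly regular. Finally, for the $(\Co^\times)^d$-structure: since $c_{n-1}$ fixes the spectrum of $z$, the tori $Z_{G_{n-1}}(z)$ assemble into a trivial bundle of tori over $(\fg_{n-1})_{c'}$, and as that base is an affine variety (isomorphic to a torus, by induction) the $Z_{G_{n-1}}(z)$-torsor $\pi$ is trivial; thus $\fibre\cong(\fg_{n-1})_{c'}\times(\Co^\times)^{r_{n-1}}$ carries the product of the inductively supplied free $(\Co^\times)^{\,d-r_{n-1}}$-action with the fibrewise $(\Co^\times)^{r_{n-1}}$-action, a free $(\Co^\times)^d$-action making it homogeneous.

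I expect the routine part to be the $\fgl(n)$ instance of the fibre computation, which is classical and goes back to [KW1]. The main obstacle is the $\fso(n)$ case: one must redo the Schur-complement/partial-fraction analysis for a skew-symmetric border (where the last row is minus the last column and the $(n,n)$-entry vanishes), control the $\pm$-paired spectrum of a regular semisimple element of $\fso(n-1)$ together with the Pfaffian among the invariants $f_{n,j}$ (checking in particular that the gradients of $\operatorname{tr}(x^{2j})$ and of $Pfaff$ really do span the relevant Cartan), and verify that the variety of admissible last columns is exactly one free orbit of the rank-$r_{n-1}$ torus. A secondary point requiring care is the global triviality of the iterated torsor, so that the fibrewise tori genuinely assemble into a single $(\Co^\times)^d$-action rather than merely a torus action fibre by fibre.
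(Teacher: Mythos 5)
Your proposal is correct, but it is organized along a genuinely different logical route from the paper's. The paper does not prove transitivity of $A$ directly: it constructs an explicit isomorphism $\Gamma_n^c\colon(\Co^\times)^d\to\fibre$ (Theorem \ref{thm:ofibre}, with a hand-built inverse $\Psi$), differentiates the resulting free torus action to show $T_x(\fibre)=V_x$ and hence $\fibre=\sfibre$, and then invokes Theorem \ref{thm:cpts} (irreducible components of a nonempty $\sfibre$ are exactly the $A$-orbits) together with the irreducibility of $\fibre$ supplied by the isomorphism. You instead establish transitivity directly, by lifting the inductively-known transitive $A_{n-1}$-action along the projection $\pi\colon\fibre\to(\fg_{n-1})_{c'}$ and supplementing it with the fibrewise torus generated by the flows of $f_{n-1,j}$, thereby bypassing Theorem \ref{thm:cpts} and the constructibility input from Proposition \ref{prop:constr}. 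Both routes share the same computational core: the Schur-complement/partial-fraction analysis identifying $\pi^{-1}(z)$ with a single free $Z_{G_{n-1}}(z)\simeq(\Co^\times)^{r_{n-1}}$-orbit, which in the $\fso(n)$ case is precisely the content of \S\ref{s:dlsol}--\ref{s:blsol} (the equations $z_{i1}^2+z_{i2}^2=d_i$, the Pfaffian sign, the fundamental domain $\mathcal{D}_i$) that you rightly flag as the main obstacle. The trade-off is at the end: the paper trivializes the $Z_{G_{n-1}}(z)$-torsor constructively (the iterated maps $g_{i,i+1}$ of (\ref{eq:gmatrix}) furnish a section), whereas you deduce triviality abstractly from $\mathrm{Pic}$-triviality of the base torus; this is valid but nonconstructive, and the explicit section is needed anyway to write down the torus action concretely and to carry out the tangent-space computation. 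What your route buys is conceptual economy: strong regularity becomes a corollary of transitivity rather than an input to it.
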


\begin{ack}
The author would like to thank Nolan Wallach for his guidance and assistance as a thesis supervisor.
\end{ack}


\section{The Gelfand-Zeitlin Integrable System}\label{s:intro}
\subsection{The Lie-Poisson structure on $\fg$} \label{s:LP}

	We first consider a general setting.  Let $\fg$ be a finite dimensional, reductive Lie algebra over $\Co$.   Let $\beta(\cdot, \cdot)$ be the $G$-invariant form on $\fg$. 
Then $\fg$ is a Poisson manifold.  We now describe the Poisson structure.   First, we need a few preliminary notions.  Let $\mathcal{H}(\fg)$ denote the set of holomorphic functions on $\fg$.  For $x,\, y\in\fg$ define $\partial_{x}^{y}\in T_{x}(\fg)$ to be the directional derivative in the direction of $y$ evaluated at $x$ (i.e. $\partial_{x}^{y}f= \frac{d}{dt} |_{t=0}\, f(x+ty) $ for $f\in\mathcal{H}(\fg)$).   Let $\psi\in\mathcal{H}(\fg)$ and $x\in \fg$, then $d\psi_{x}\in T_{x}^{*}(\fg)\simeq \fg^{*}$.  Using the form $\beta$, we can naturally identify $d\psi_{x}$ with an element of $\fg$ denoted by $\nabla\psi (x)$ defined by the rule
\begin{equation}\label{eq:rule}
\beta(\nabla\psi(x), z)=\frac{d}{dt} |_{t=0}\, \psi(x+tz)=(\partial_{x}^{z}\psi)
\end{equation}
for all $z\in\fg$.   Then, if $\{f, g\}$ denotes the Poisson bracket of any two functions $f,\, g\in\mathcal{H}(\fg)$, one can show (see [CG], pg 36)
	\begin{equation}\label{eq:pois}
	\{f,g\}(x)=\beta(x, [\nabla f(x), \nabla g(x)]).
	\end{equation}

Note that if $f,g \in\mathcal{H}(\fg)$, then (\ref{eq:pois}) implies that their Poisson bracket $\{f, g\}\in\mathcal{H}(\fg)$.  Hence, $\mathcal{H}(\fg)$ is a Poisson algebra.  Using the form $\beta$, we can identify this Poisson structure on $\fg$ with the Lie-Poisson structure on $\fg^{*}$.  The Lie-Poisson structure on $\fg^{*}$ is the unique Poisson structure on $\fg^{*}$ such that the Poisson bracket of linear functions $f, \, g\in(\fg^{*})^{*}=\fg$ is the Lie bracket of $f$, $g$ (i.e. $\{f, g\}=[f,g]$) [CG].  In particular, the symplectic leaves are the adjoint orbits of the adjoint group $G$ of $\fg$ [Va].  Let $x\in\fg$ and $\orbx$ be its adjoint orbit.  The symplectic structure on $\orbx$ is often referred to as the Kostant-Kirillov-Souriau (KKS) structure. (See [CG, pg 23] for an explicit description of this structure.)

	
	
For each $f\in\mathcal{H}(\fg)$ we define a Hamiltonian vector field $\xi_{f}$.  The action of $\xi_{f}$ on $\mathcal{H}(\fg)$ is $\xi_{f}(g)=\{f, g\}$.  Using, (\ref{eq:pois}) we can compute the Hamiltonian vector field at a point $x\in\fg$, 
\begin{equation}\label{eq:tanvec}
(\xi_{f})_{x}=\partial_{x}^{[x,\nabla f(x)]}.
\end{equation} 
With this description of $(\xi_{f})_{x}$ it is easy to see that $(\xi_{f})_{x}\in T_{x}(\orbx)$.  

Our work focuses on adjoint orbits $\orbx$ of maximal dimension.  For $x\in\fg$, let $\fz_{\fg}(x)$ be the centralizer of $x$ in $\fg$.  
An element $x\in\fg$ is said to be regular if $\dim \fz_{\fg}(x)=r$, where $r$ is the rank of $\fg$.  Thus, $x$ is regular if and only if $\dim \fz_{\fg}(x)$ is minimal.  This is equivalent to $\dim\orbx=\dim \fg-r$ being maximal. 

We are interested in constructing polarizations of regular adjoint orbits.   A polarization of a symplectic manifold $(M,\omega)$ is an integrable subbundle $P\subset TM$ which is Lagrangian i.e. $P_{m}=P_{m}^{\perp}$ for all $m\in M$.   
It then follows that $\dim(P_{m})=\frac{1}{2}\dim (M)=d$ for $m\in M$.  Suppose that $f_{1},\cdots, f_{d}$ are independent Poisson commuting functions on $M$.  Independent means that the differentials of these functions $\{ df_{i} |\, 1\leq i\leq d\}$ are linearly independent on an open, dense subset of $M$ (see [C]). (If $f_{1}, \cdots , f_{d}$ are polynomials and $M$ is a smooth affine variety, then this definition is equivalent to the statement that $f_{1},\cdots, f_{d}$ are algebraically independent.)  The span $\{ \xi_{f_{i}} |1\leq i\leq d\}$ of the Hamiltonian vector fields gives a polarization on an open, dense subset of $M$.  The integral submanifolds of this polarization are necessarily Lagrangian submanifolds of $M$.  The functions $f_{1},\cdots, f_{d}$ are often referred to as a (completely) integrable system [C].  
In the case $(M,\omega)=(\orbx, \omega)$, where $\omega$ is the KKS symplectic structure and $\orbx$ is a regular adjoint orbit, we want to find $d$ independent Poisson commuting functions where $2d=\dim\orbx=\dim\fg-r$.  If $\fg=\fgl(n)$ or $\fso(n)$, we will produce such a family using a classical analogue of the Gelfand-Zeitlin algebra in the polynomials on $\fg$, $P(\fg)$.  

\subsection{A classical analogue of the Gelfand-Zeitlin algebra}\label{s:GZa}

For the remainder of the paper let $\fg_{n}=\fgl(n),\, \fso(n)$.  We represent $\fso(n)$ as $n\times n$ complex skew-symmetric matrices.   We can take the form $\beta$ of the last section to be the trace form.  Let $\fg_{i}=\fgl(i), \, \fso(i)$ for $1\leq i\leq n$.  Let $G_{i}$ be the corresponding adjoint group.   We then have a natural inclusion of subalgebras $\mathfrak{g}_{i}\hookrightarrow \fg_{n}$.  The embedding is
$$
Y\hookrightarrow\left [\begin{array}{cc} Y & 0\\
0 & 0\end{array}\right ]_{\mbox{\large ,}}$$
which puts the $i\times i$ matrix $Y$ as the top left hand corner of an $n\times n$ matrix.  We also have a corresponding embedding of the adjoint groups 
$$
g\rightarrow \left [\begin{array}{cc} g & 0\\
0 & Id_{n-i}\end{array}\right ]_{\mbox{\large ,}}
$$
where $Id_{n-i}$ is the $(n-i)\times (n-i)$ identity matrix.   We always think of $\fg_{i}\hookrightarrow \fg_{n}$ and $G_{i}\hookrightarrow G_{n}$ via these two embeddings unless otherwise stated. 
  We make the following definition.
\begin{dfn}\label{d:cutoffs}
For $x\in\fg_{n}$, let $x_{i}\in\fg_{i}$ be the top left hand corner of $x$, i.e. $(x_{i})_{k,l}=x_{k,l}$ for $1\leq k, l\leq i$.  We refer to $x_{i}$ as the $i\times i$ cutoff of $x$.
\end{dfn}

The set of polynomials $P(\fg_{n})$ on $\fg_{n}$ is a Poisson subalgebra of $\mathcal{H}(\fg_{n})$ (see (\ref{eq:pois})).  For any $i$, $1\leq i\leq n$,
\begin{equation}\label{eq:sum} 
\fgn=\fg_{i}\oplus\fg_{i}^{\perp},
\end{equation}
 where $\fg_{i}^{\perp}$ denotes the orthogonal complement of $\fg_{i}$ with respect to the trace form on $\fgn$.  Thus, we can use the trace form on $\fgn$ to identify $\fg_{i}\simeq \fg_{i}^{*}$.  This implies $P(\fg_{i})\subset P(\fg_{n})$ is a Poisson subalgebra.  Explicitly, if $f\in P(\fg_{i})$ and $x\in\fg_{n}$, then $f(x)=f(x_{i})$.  Moreover, the Poisson structure on $P(\fg_{i})$ inherited from $P(\fg_{n})$ agrees with the Lie-Poisson structure on $P(\fg_{i})$ [KW1, pg 330].  Thus, the $\Ad$-invariant polynomials on $\fg_{i}$, $P(\fg_{i})^{G_{i}}$, are in the Poisson centre of $P(\fg_{i})$, since their restriction to any adjoint orbit of $G_{i}$ in $\fg_{i}$ is constant.  Hence, the subalgebra of $P(\fg_{n})$ generated by the different $\Ad$-invariant polynomial rings $P(\fg_{i})^{G_{i}}$ for all $i$, $1\leq i\leq n$ is Poisson commutative.  We refer to this algebra as $J(\fg_{n})$.

  %

\begin{equation}\label{eq:invaralg}
J(\fg_{n})=P(\fg_{1})^{G_{1}}\otimes\cdots\otimes P(\fg_{n})^{G_{n}}.
\end{equation}
We say that the Poisson commutative algebra $J(\fg_{n})$ is a classical analogue of the Gelfand-Zeitlin algebra in $P(\fg_{n})$.  The Gelfand-Zeitlin algebra $GZ(\fg_{n})$ is the associative subalgebra of the universal enveloping algebra of $\fg_{n}$, $U(\fg_{n})$, generated by the centres $Z(\fg_{i})$ of $U(\fg_{i})$ for $1\leq i\leq n$,  i.e. $GZ(\fg_{n})=Z(\fg_{1})\cdots Z(\fg_{n})$.  The isomorphism  $Z(\fg_{i})\simeq S(\fg_{i})^{G_{i}}$ (see Theorem 10.4.5 in [Dix]) then justifies our terminology, because $S(\fg_{i})^{G_{i}}\simeq P(\fg_{i}^{*})^{G_{i}}\simeq P(\fg_{i})^{G_{i}}$.  From now on we simply refer to $J(\fg_{n})$ as the Gelfand-Zeitlin algebra.

\begin{rem}\label{r:GZ}
The Gelfand-Zeitlin algebra is a polynomial algebra in ${n+1\choose 2}$ generators ([DFO]).   We will soon see that this is also true of the algebra $J(\fg_{n})$ (see section \ref{s:generics}), and therefore $J(\fgn)\simeq GZ(\fg_{n})$ as associative algebras.  
\end{rem}


Since $J(\fg_{n})$ is Poisson commutative,  $V=\{\xi_{f} |f\in J(\fg_{n})\}$ is a commutative Lie algebra of Hamiltonian vector fields.    We define a general distribution by 
\begin{equation}\label{eq:distr}
V_{x}=span\{(\xi_{f})_{x} |f\in J(\fg_{n})\}\subset T_{x}(\fg_{n}).
\end{equation}
We observe that if $\{f_{i}\}_{i\in I}$ generate the Gelfand-Zeitlin algebra $J(\fg_{n})$, then $V_{x}=span\{(\xi_{f})_{x} |f\in J(\fg_{n})\}=span\{(\xi_{f_{i}})_{x} | i\in I\}$.  This follows directly from the Leibniz rule, which implies that $df\in span \{df_{i}\}_{i\in I}$.  Let $f_{i,1},\cdots, f_{i, r_{i}}$, with $r_{i}=rank(\fg_{i})$ generate the ring $P(\fg_{i})^{G_{i}}$.  $J(\fg_{n})$ is then generated by the polynomials $f_{i,1},\cdots, f_{i, r_{i}}$ for $1\leq i\leq n$.  Recall that if $f\in P(\fg_{n})^{G_{n}}$, then $\xi_{f}=0$.  Thus, 
\begin{equation}
\dim V_{x}\leq \sum_{i=1}^{n-1} r_{i}.
\end{equation}
 For $\fg_{n}=\fgl(n),\, \fso(n)$ we compute
\begin{equation}\label{eq:countd}
\sum_{i=1}^{n-1} r_{i}=d,
\end{equation}
where $d$ is half of the dimension of a regular adjoint orbit $\orbx$ in $\fgn$.  If we can show that the functions $f_{i,1}, \cdots , f_{i, r_{i}}$, $1\leq i\leq n$ are algebraically independent, we will have a completely integrable system on certain regular adjoint orbits.  

In the next section, we show that the Hamiltonian vector field $\xifij$ of $f_{i,j}$ is complete.  Since the vector fields $\xifij$ commute for all $i, j$, we obtain a global action of $\Co^{d}$ on $\fgn$.  Thus, we can study the Gelfand-Zeitlin system by studying the action of $\Co^{d}$ on $\fgn$.  In section \ref{s:sreg}, we show that the existence of orbits of $\Co^{d}$ of dimension $d$ is equivalent to the algebraic independence of the functions $\{f_{i,j}| 1\leq i\leq n,\, 1\leq j\leq r_{i}\}$.  We will then see that the $\Co^{d}$ orbits of dimension $d$ are Lagrangian submanifolds of certain regular adjoint orbits.  In section \ref{s:generics}, we describe examples of such $\Co^{d}$ orbits and obtain the complete integrability of the Gelfand-Zeitlin system on certain regular semisimple adjoint orbits.  



\begin{rem}
The difficulty of trying to reproduce this scheme for the symplectic Lie algebra $\mathfrak{sp}(2n, \Co)$ is that $\sum_{i=1}^{n-1} r_{i}< \frac{1}{2} \dim\orbx$, $\orbx$ a regular adjoint orbit in $\mathfrak{sp}(2n, \Co)$.  Thus, no choice of subalgebra of $J(\fg_{n})$ gives rise to a completely integrable system.  One can check that one needs an extra $n^{2}-\frac{n(n-1)}{2}$ independent functions. 
\end{rem}

\subsection{The group $A$ } \label{s:GZg}

Let $f\in J(\fg_{n})$.  The remarkable fact about the Hamiltonian vector fields $\xi_{f}$ is that they are complete.  We first discuss a special case of this fact.  Let $r_{i}=rank(\fg_{i})$ and let $\{ f_{i,j} | 1\leq i\leq n,\, 1\leq j\leq r_{i}\}$ be as in the previous section.  The vector field $\xifij$ integrates to a global action of $\Co$ on $\fgn$ for each $i$, $j$. 
\begin{thm}\label{thm:GZgroup}
Let $d$ be half the dimension of a regular adjoint orbit in $\fgn$.  Let $\fa$ be the commutative Lie algebra generated by the vector fields $\{\xi_{f_{i,j}} | 1\leq i\leq n-1, \, 1\leq j\leq r_{i}\}$.  Then $\fa$ integrates to an action of $\Co^{d}$ on $\fgn$.  The orbits of $\Co^{d}$ are leaves of the distribution $x\to V_{x}$ given by (\ref{eq:distr}).  The action of $\Co^{d}$ stabilizes adjoint orbits.  
\end{thm}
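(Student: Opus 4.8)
The plan is to establish completeness of each Hamiltonian vector field $\xi_{f_{i,j}}$ individually; once we know each of the commuting vector fields $\{\xi_{f_{i,j}} \mid 1 \le i \le n-1,\ 1 \le j \le r_i\}$ generates a global $\Co$-action, the fact that they pairwise commute (which follows from Poisson-commutativity of $J(\fg_n)$, already noted after \eqref{eq:invaralg}) lets us assemble them into a single action of $\Co^d$, the dimension count $\sum_{i=1}^{n-1} r_i = d$ from \eqref{eq:countd} giving the correct rank. So the heart of the argument is: \emph{for fixed $i$ and $j$, the Hamiltonian flow of $f_{i,j}$ is complete on all of $\fgn$.}

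First I would reduce to understanding the flow of $\xi_{f_{i,j}}$ explicitly. By \eqref{eq:tanvec}, $(\xi_{f})_x = \partial_x^{[x,\nabla f(x)]}$, so the flow is the solution of $\dot{x} = [x, \nabla f(x)]$. The key structural point is that $f = f_{i,j}$ depends only on the cutoff $x_i$: since $f \in P(\fg_i)^{G_i}$, we have $f(x) = f(x_i)$, and one computes that $\nabla f(x)$ lies in $\fg_i$ and in fact equals $\nabla_{\fg_i}(f)(x_i)$, an $\Ad$-$G_i$-equivariant function of $x_i$. The crucial observation is that $\nabla f(x)$ depends only on $x_i$ and, because $f_{i,j}$ is an invariant polynomial, $\nabla f(x_i)$ commutes with $x_i$ — wait, more precisely, $[\nabla f(x_i), x_i] = 0$ since $f$ is $\Ad$-invariant on $\fg_i$. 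Therefore along the flow, the $i \times i$ cutoff satisfies $\dot{x_i} = [x_i, \nabla f(x)]$ with $[x_i,\nabla f(x)] = 0$... but $\nabla f(x) \in \fg_i$ only bracketed against the full $x \in \fgn$ need not vanish. The correct statement is that $\dot x = [x, \nabla f(x_i)]$ where $\nabla f(x_i) \in \fg_i$ is \emph{constant along the flow}: indeed $\frac{d}{dt} x_i = \pr_{\fg_i}[x,\nabla f(x_i)] = [x_i, \nabla f(x_i)] = 0$, so $x_i$ is a conserved quantity, hence so is $\nabla f(x_i) =: \xi$. Then the full equation becomes the \emph{linear} ODE $\dot{x} = [x, \xi] = -\ad(\xi)(x)$ with constant coefficient matrix $\ad(\xi)$, whose solution $x(t) = \Ad(\exp(-t\xi))x(0) = \exp(-t\,\ad\xi)\,x(0)$ is defined for all $t \in \Co$. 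This proves completeness and simultaneously shows the flow is by $\Ad$ of the one-parameter subgroup $\exp(-t\,\nabla f_{i,j}(x_i)) \subset G_i \subset G_n$, which immediately gives the last assertion that the action stabilizes adjoint orbits.

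Next I would address the two remaining claims. For the statement that the orbits of $\Co^d$ are the leaves of the distribution $x \mapsto V_x$ of \eqref{eq:distr}: by construction $\fa = \{\xi_f \mid f \in J(\fg_n)\}$ as Lie algebras of vector fields (using the Leibniz-rule remark that $V_x = \mathrm{span}\{(\xi_{f_{i,j}})_x\}$), and the tangent space to the $\Co^d$-orbit through $x$ is exactly $\mathrm{span}\{(\xi_{f_{i,j}})_x\} = V_x$; since $\fa$ is an \emph{integrable} (in fact involutive, being commutative) distribution generated by complete commuting vector fields, the orbits of the resulting $\Co^d$-action are precisely its maximal integral leaves — this is a standard Frobenius/global-flow argument. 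The dimension may jump, but that is allowed for a generalized distribution. Finally, the adjoint-orbit-stabilizing property follows from the explicit flow description above: each generating flow is given by conjugation by an element of $G_n$, so the composite $\Co^d$-action moves each point within its $G_n$-adjoint orbit.

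The main obstacle I anticipate is the verification that $x_i$ (and hence $\nabla f_{i,j}(x_i)$) is genuinely conserved along the flow of $\xi_{f_{i,j}}$ — this rests on the compatibility, asserted in the excerpt via [KW1, pg.\ 330], between the Poisson structure $P(\fg_i)$ inherits from $P(\fg_n)$ and its own Lie–Poisson structure, together with the identification $\fg_i \simeq \fg_i^*$ via the trace form and the orthogonal splitting \eqref{eq:sum}. Once that compatibility is in hand, the linearization of the ODE is essentially automatic; without it one would have to argue directly that $\pr_{\fg_i^\perp}$ of $[x,\nabla f(x_i)]$ causes no trouble, which is less transparent. A secondary subtlety is making the passage from "each $\xi_{f_{i,j}}$ complete and pairwise commuting" to "a well-defined action of the abelian group $\Co^d$" fully rigorous: one must check that the flows $\varphi^{i,j}_{t_{i,j}}$ commute as diffeomorphisms (not merely infinitesimally), which again follows from completeness plus $[\xi_{f_{i,j}}, \xi_{f_{k,l}}] = 0$ by the standard theorem on commuting complete vector fields.
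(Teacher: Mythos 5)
Your proposal is correct and follows essentially the same route as the paper: the key observations (that $\nabla f_{i,j}(x)=\nabla f_{i,j}(x_i)\in\fz_{\fg_i}(x_i)$, that the flow is $\Ad\bigl(\exp(-t\,\nabla f_{i,j}(x_i))\bigr)\cdot x$ and hence complete, that the flows commute because $J(\fg_n)$ Poisson-commutes, and that conjugation preserves adjoint orbits) are precisely those used in the paper's proof of Theorem \ref{thm:GZgroup}. The only cosmetic difference is in how the flow formula is obtained: the paper writes down the candidate curve $\theta(t,x)=\Ad(\exp(-t\,\nabla f_{i,j}(x_i)))\cdot x$ and verifies by differentiating that it solves the ODE (using $\theta(t,x)_i=x_i$), whereas you first derive the conservation of $x_i$ from the ODE by projecting onto $\fg_i$ via the splitting \eqref{eq:sum}, note this makes $\dot x=[x,\nabla f(x_i)]$ a constant-coefficient linear system, and then read off the same exponential solution. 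Both arguments rest on the same structural fact (that $\ad\,\fg_i$ preserves the decomposition $\fg_n=\fg_i\oplus\fg_i^\perp$), so this is a reorganization rather than a genuinely different method.
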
 
\begin{proof}
By equation (\ref{eq:tanvec}), we have $(\xi_{f_{i,j}})_{x}=\partial_{x}^{[-\nabla f_{i,j}(x),x]}$.  The key observation is that for any $\phi\in P(\fg_{i})^{G_{i}}$, $y\in\fg_{i}$, $\nabla \phi(y)\in\fz_{\fg_{i}}(y)$, where $\fz_{\fg_{i}}(y)$ denotes the centralizer of $y$ in $\fg_{i}$.  We readily note that $\nabla f_{i,j}(x)=\nabla f_{i,j}(x_{i})\in\fg_{i}$ for $x\in\fgn$.  Thus, $\nabla f_{i,j}(x)\in\fz_{\fg_{i}}(x_{i})$.  Using this fact, we can show  
\begin{equation}\label{eq:action}
\theta(t,\, x)=\Ad(\exp(-t\, \nabla f_{i,j}(x_{i})))\cdot x
\end{equation} 
is the integral curve for the vector field $\xifij$ starting at $x\in\fg_{n}$.  We compute the differential to the curve $\theta(t,x)$ at an arbitrary $t_{0}\in\Co$. 
$$ 
\begin{array}{cc}
\frac{d}{dt} |_{t=t_{0}} \,\Ad(
\exp(-t\, \nabla f_{i,j}(x_{i})))\cdot x  = &\\
\\
\frac{d}{dt} |_{t=t_{0}}\exp(t \,\ad( -\nabla f_{i,j}(x_{i})))\cdot x =&\\
\\
\ad(-\nabla f_{i,j}(x_{i}))\cdot (\exp(t_{0} \,\ad\,( -\nabla f_{i,j}(x_{i})))\cdot x). & 
\end{array}
$$
We let $$y=\exp(t_{0} \,\ad \,(-\nabla f_{i,j}(x_{i})))\cdot x=\Ad(\exp(-t_{0}\, \nabla f_{i,j}(x_{i})))\cdot x=\theta(t_{0},\,x).$$  Since $-\nabla f_{i,j}(x_{i})$ centralizes $x_{i}$,  $\theta(t,\,x)_{i}=x_{i}$ for all $t\in\C$.  In particular, we have $y_{i}=x_{i}$.  This implies $$\ad(-\nabla f_{i,j}(x_{i}))\cdot\, (\exp(-t_{0}\,\ad(\nabla f_{i,j}(x_{i})))\cdot x)=\ad(-\nabla f_{i,j}(y_{i})) \cdot y=(\xifij)_{y}$$ by equation (\ref{eq:tanvec}), which verifies the claim.  
To complete the proof of the theorem, we observe that since the Lie algebra $\fa$ is commutative, the flows of the vector fields $\xifij$ all commute.  Thus, the actions of $\Co$ in (\ref{eq:action}) commute and give rise to an action of $\Co^{d}$ on $\fg_{n}$.  It follows easily from (\ref{eq:action}) that this action of $\Co^{d}$ preserves the adjoint orbits.  
\end{proof}

The proof given here is the one in [Col].  For a different proof in the case of $\fgn=\fgl(n)$, see Theorems 3.3, 3.4 in [KW1].  Using the the completeness of the vector fields $\xifij$, one can then prove the completeness of any Hamiltonian vector field $\xi_{f}$ for $f\in J(\fgn)$.  One can also show that the foliation of $\fg_{n}$ given by the action of $\Co^{d}$ in Theorem \ref{thm:GZgroup} is independent of the choice of generators for the Gelfand-Zeitlin algebra $J(\fg_{n})$.  
\begin{thm}\label{thm:integrate}
Let $f\in J(\fg_{n})$.  The Hamiltonian vector field $\xi_{f}$ integrates to a global action of $\Co$ on $\fg_{n}$.  Suppose that the polynomials $\{q_{i} | 1\leq i\leq k\}$ generate the Gelfand-Zeitlin algebra.  Let $\fa^{\prime}$ be the Lie algebra generated by the Hamiltonian vector fields $\{ \xi_{q_{i}} | 1\leq i\leq k\}$.  Then $\fa^{\prime}$ integrates to an action of $\Co^{k}$ on $\fgn$.  This action commutes with the action of $\Co^{d}$ of Theorem \ref{thm:GZgroup}.  The orbits of the action of $\Co^{k}$ on $\fgn$ are the same as the action of $\Co^{d}$ in Theorem \ref{thm:GZgroup}.   \end{thm}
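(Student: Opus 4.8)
The plan is to bootstrap everything from Theorem~\ref{thm:GZgroup}, which already supplies a \emph{global} $\Co^{d}$-action generated by the $\xifij$, $1\le i\le n-1$, and to exploit one structural fact throughout: since $J(\fgn)$ is Poisson commutative, every $f_{k,l}$ — and hence every polynomial in the $f_{k,l}$ — is constant along the $\Co^{d}$-orbits. First I would prove completeness of $\xi_{f}$ for an arbitrary $f\in J(\fgn)$. Write $f=F(f_{1,1},\dots,f_{n,r_{n}})$ for some polynomial $F$ (the $f_{i,j}$ need not be algebraically independent; any such $F$ will do). Applying the chain rule to $f=F\circ\Phi$, then the identification $\nabla$ and formula~(\ref{eq:tanvec}) together with the linearity of $y\mapsto\partial_{x}^{[x,y]}$, yields a global identity of vector fields
\[
\xi_{f}=\sum_{i,j} g_{i,j}\,\xifij,\qquad g_{i,j}:=\bigl(\partial F/\partial t_{i,j}\bigr)\circ\Phi ,
\]
where the terms with $i=n$ drop out because $\xi_{f_{n,j}}=0$. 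Now the key point is that each coefficient $g_{i,j}$, being a polynomial in the $f_{k,l}$, is constant along $\Co^{d}$-orbits; so, writing $\cdot$ for the $\Co^{d}$-action of Theorem~\ref{thm:GZgroup}, the curve $t\mapsto\bigl(t\,g_{1,1}(x),\dots,t\,g_{n-1,r_{n-1}}(x)\bigr)\cdot x$ is defined for all $t\in\Co$, and differentiating it (using that the $g_{i,j}$ are constant along it and that the fundamental vector field of the $(i,j)$-th coordinate direction of $\Co^{d}$ is $\xifij$) shows it is the integral curve of $\xi_{f}$ through $x$. Hence $\xi_{f}$ is complete.

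Next I would assemble the $\Co^{k}$-action. Each $q_{i}\in J(\fgn)$, so $\xi_{q_{i}}$ is complete by the previous step, and since $J(\fgn)$ is Poisson commutative we have $[\xi_{q_{i}},\xi_{q_{j}}]=\xi_{\{q_{i},q_{j}\}}=0$ and likewise $[\xi_{q_{i}},\xifij]=0$. Pairwise commuting complete (holomorphic) vector fields integrate to a (holomorphic) action of the corresponding vector group — exactly as in the last lines of the proof of Theorem~\ref{thm:GZgroup} — so the $\xi_{q_{i}}$ give an action of $\Co^{k}$ on $\fgn$; and commuting complete vector fields have commuting flows, so this $\Co^{k}$-action commutes with the $\Co^{d}$-action.

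For the equality of orbits, I would note that since the $q_{i}$ generate $J(\fgn)$ and so do the $f_{i,j}$, the Leibniz-rule computation used above (run in both directions) gives $\mathrm{span}\{(\xi_{q_{i}})_{x}\}=\mathrm{span}\{(\xifij)_{x}\}=V_{x}$ for every $x$, with $V_{x}$ the distribution~(\ref{eq:distr}). In particular each $\xi_{q_{i}}$ is tangent to the $\Co^{d}$-orbits, so the $\Co^{k}$-action preserves each $\Co^{d}$-orbit $O$; and on $O$ — which is a leaf of $V$, so $T_{y}O=V_{y}$ at each $y\in O$ by Theorem~\ref{thm:GZgroup} — the infinitesimal $\Co^{k}$-action spans $T_{y}O$ at every point. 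Thus the orbit map of the $\Co^{k}$-action on $O$ is a submersion at the identity at every point, so each $\Co^{k}$-orbit is open in $O$; since $O$ is connected it must be a single $\Co^{k}$-orbit. Therefore the $\Co^{k}$-orbits coincide with the $\Co^{d}$-orbits of Theorem~\ref{thm:GZgroup}.

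The only genuinely substantive step is the first: the identity $\xi_{f}=\sum g_{i,j}\xifij$ is routine, but the observation that its coefficients are \emph{constant along the $\Co^{d}$-orbits} is what lets us realize $\xi_{f}$, orbit by orbit, as the infinitesimal generator of an honest one-parameter subgroup of the already-global $\Co^{d}$-action, and hence deduce its completeness. Everything afterward is standard Lie-theoretic bookkeeping (commuting complete vector fields give commuting group actions; fundamental vector fields span tangent spaces to orbits).
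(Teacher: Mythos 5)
Your proposal is correct, and the central mechanism — writing $\xi_{f}=\sum_{i,j}g_{i,j}\,\xifij$ with coefficients $g_{i,j}=(\partial F/\partial t_{i,j})\circ\Phi$ that are themselves in $J(\fg_{n})$ and hence constant along $\Co^{d}$-orbits, so that the integral curve of $\xi_{f}$ through $x$ can be realized as $t\mapsto(t\,g(x))\cdot x$ inside the already-global $\Co^{d}$-action — is precisely the device used in the proof of Theorem~3.5 of [KW1], to which this paper defers for the result.

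One small point worth tightening in the orbit-equality step: the assertion that ``each $\xi_{q_{i}}$ is tangent to the $\Co^{d}$-orbits, so the $\Co^{k}$-action preserves each $\Co^{d}$-orbit'' is not automatic for an arbitrary vector field tangent to an immersed submanifold (tangency at every point does not in general keep the flow on the submanifold). Here, however, it follows immediately from the explicit integral curve you already produced in the completeness argument: $t\mapsto(t\,g(x))\cdot x$ manifestly lies in $\Co^{d}\cdot x$, so the flow of $\xi_{q_{i}}$ stays inside each $\Co^{d}$-orbit. Once that is observed you can also short-circuit the remaining ``open orbits $+$ connectedness'' argument: by the symmetric computation (expressing each $f_{i,j}$ as a polynomial in the $q_{l}$'s), the flow of each $\xifij$ stays inside each $\Co^{k}$-orbit, so the two orbit partitions refine each other and are therefore equal.
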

For a proof, we refer the reader to Theorem 3.5 in [KW1].  The proof given there works in the orthogonal case without modification.

Since we are concerned with the geometry of orbits of the Gelfand-Zeitlin system of maximal dimension $d$, we loose no information in studying a specific action of $\Co^{d}$ on $\fg_{n}$ by fixing a choice of generators $\{f_{i,j} |1\leq i\leq n, 1\leq j\leq r_{i}\}$ for $J(\fg_{n})$.  For $\fg_{i}=\fgl(i)$, we take the generators for $P(\fg_{i})^{G_{i}}$ to be given by equation (\ref{eq:ainvars}).  For $\fg_{n}=\fso(n)$, we have to consider two cases.  If $\fg_{i}=\fso(2l) $ is of type $D_{l}$, we take the generators in (\ref{eq:Dinvars}).  If $\fg_{i}=\mathfrak{so}(2l+1,\mathbb{C})$ is of type $B_{l}$, we take the generators in (\ref{eq:Binvars}).  
 %

\begin{nota}
Let $\fgn=\fgl(n)$, $f_{i,j}(x)=tr(x_{i}^{j})$, and $\fa$ be as in Theorem \ref{thm:GZgroup}.  Kostant and Wallach refer to the unique simply connected Lie group with Lie algebra $\fa$ as $A\simeq \Co^{{n\choose 2}}=\Co^{d}$.  The group $A$ acts on $\fgl(n)$ via the action of $\Co^{{n\choose 2}}$ in Theorem \ref{thm:GZgroup}, see [KW1] Theorem 3.3.  We adopt this terminology for both $\fgl(n)$ and $\fso(n)$.  That is to say, for $\fgn=\fso(n)$ we will refer to the group $\Co^{d}$ as $A$ and the action of $\Co^{d}$ given in Theorem \ref{thm:GZgroup} as the action of $A$.  
\end{nota}

It is illustrative to write out the vector fields for the Lie algebra $\fa$ in the case of $\fgl(n)$ (see Theorem 2.12 in [KW1]).  
\begin{equation}\label{eq:coords}
 (\xi_{f_{i,j}})_{x}=\partial_{x}^{[-jx_{i}^{j-1}, x]}.
\end{equation}
 This follows from the fact that for $f_{i,j}=tr(x_{i}^{j})$, $\nabla f_{i,j}(x)=j x_{i}^{j-1}$.  Using (\ref{eq:action}) and (\ref{eq:coords}), we see that $\xi_{f_{i,j}}$ integrates to an action of $\mathbb{C}$ on $\fgl(n)$ given by
 \begin{equation}\label{eq:flows}
 \Ad\left (\left [\begin{array}{cc} \exp(tjx_{i}^{j-1}) & 0 \\
0 & Id_{n-i} \end{array}\right ]\right)\cdot x
\end{equation}
for $t\in\mathbb{C}$.  
The orbits of $A$ are then the composition of the flows in (\ref{eq:flows}) for $1\leq i\leq n-1$, $1\leq j\leq i$ in any order.  

Unfortunately, for the case of $\fso(n)$, the $A$-orbits do not have such a clean description.  However, we can say that they are given by composing the flows in (\ref{eq:action}) in any order.

Using (\ref{eq:coords}), we get a fairly easy description of the distribution $V_{x}$ defined in (\ref{eq:distr}) for $x\in\fgl(n)$.  We define $Z_{x}=\sum_{i=1}^{n-1} Z_{x_{i}}$, where $Z_{x_{i}}$ is the associative subalgebra of $\fgl(i)\hookrightarrow \fgl(n)$ generated by $Id_{i}$ and $x_{i}$.  From (\ref{eq:coords}), it follows that 
\begin{equation}\label{eq:easydist}
V_{x}=span\{\partial_{x}^{ [z,x]} | z\in Z_{x}\}.
\end{equation}
 
 In the case of $\fg_{n}=\fso(n)$, there is no simple description of the distribution $V_{x}$ as in (\ref{eq:easydist}).  The difficulty lies in the fact that the differential of the Pfaffian is not a power of the matrix.  However, for our purposes it will suffice to describe $V_{x}$ as
 \begin{equation}\label{eq:odist}
 V_{x}=span\{ \partial_{x}^{[\nabla f_{i,j}(x_{i}), x]} | \, 2\leq i\leq n-1, \, 1\leq j\leq r_{i}\}.
 \end{equation}


\subsection{Strongly regular elements and the polarization of adjoint orbits} \label{s:sreg}
The results of this section are taken from [KW1] unless otherwise stated.  We provide proofs that are valid for both $\fgl(n)$ and $\fso(n)$ for the convenience of the reader.  With the exception of Proposition \ref{prop:constr}, the proofs presented are the ones in [KW1], which automatically generalize to the case of $\fso(n)$.  For Proposition \ref{prop:constr}, we present a different proof, which easily incorporates both $\fgl(n)$ and $\fso(n)$. 

	Let $r_{i}=rank(\fg_{i})$.  In this section, we show that the algebraic independence of the functions $\{f_{i,j}(x) | 1\leq i\leq n, \,1\leq j\leq r_{i}\}$ is equivalent to the existence of orbits of the group $A$ of maximal dimension $d$.  In section \ref{s:generics}, we will produce such orbits using a special Zariski open subset of regular semisimple elements in $\fgn$.  We accordingly make the following theorem-defintion.


 \begin{dfn-thm}\label{d:sreg}
$x\in\fg_{n}$ is said to be strongly regular if and only if the differentials $\{(df_{i,j})_{x} | 1\leq i\leq n, 1\leq i \leq r_{i}\}$ are linearly independent at $x$.  This is equivalent to the $A$-orbit of $x$ being of maximal dimension $d$.   We denote the set of strongly regular elements of $\fg_{n}$ by $\fg_{n}^{sreg}$. 
\end{dfn-thm}
Before giving a proof of this fact, we have to recall a basic result of Kostant (see [K, pg 382]).   

\begin{thm}\label{thm:reg}
 Let $x$ be an element of a reductive Lie algebra $\fg$.  Then $x$ is regular if and only if $(d\phi_{1})_{x}\wedge\cdots\wedge (d\phi_{l})_{x}\neq 0$, where $\phi_{1},\cdots, \phi_{l}$ generate the ring $P(\fg)^{G}$.  
\end{thm}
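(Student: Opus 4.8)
Proof proposal for Theorem \ref{thm:reg} (Kostant's criterion for regularity).

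The plan is to reduce the statement to a dimension count on the differentials $\{(d\phi_k)_x\}$ by relating the span of these differentials — viewed as elements of $\fg^*$, or equivalently (via $\beta$) the span of the gradients $\{\nabla\phi_k(x)\}$ in $\fg$ — to the centralizer $\fz_\fg(x)$. The key algebraic input is Kostant's theorem on the structure of $P(\fg)^G$: the variety of non-regular elements has codimension $\geq 3$ in $\fg$, the generators $\phi_1,\dots,\phi_l$ ($l = \mathrm{rank}\,\fg$) form a polynomial subalgebra, and $P(\fg)$ is free as a module over $P(\fg)^G$. First I would record the elementary observation that for any $G$-invariant $\phi$ and any $x$, the gradient $\nabla\phi(x)$ lies in $\fz_\fg(x)$ (differentiate the invariance identity $\phi(\Ad(g)x) = \phi(x)$ along a one-parameter subgroup, exactly as in the proof of Theorem \ref{thm:GZgroup}). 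Hence $\mathrm{span}\{\nabla\phi_1(x),\dots,\nabla\phi_l(x)\} \subseteq \fz_\fg(x)$, and since $\dim\fz_\fg(x) \geq l$ always, the wedge $(d\phi_1)_x\wedge\cdots\wedge(d\phi_l)_x$ being nonzero forces $\dim\fz_\fg(x) = l$, i.e. $x$ regular. This gives the easy implication.

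For the converse — $x$ regular implies the wedge is nonzero — the plan is to argue by a specialization/generic-rank argument combined with the codimension of the singular locus. One knows the wedge is nonzero on a nonempty Zariski open set (the $\phi_k$ are algebraically independent, as they generate $P(\fg)^G$ which has transcendence degree $l$), so the locus $Z = \{x : (d\phi_1)_x\wedge\cdots\wedge(d\phi_l)_x = 0\}$ is a proper closed subvariety. One must show $Z$ contains no regular elements. The cleanest route is: (i) reduce to $x = s$ semisimple regular by a Jordan-decomposition / limit argument, or directly (ii) at a regular semisimple $x = h$ in a Cartan subalgebra $\fh$, the restriction map $P(\fg)^G \to P(\fh)^W$ is an isomorphism (Chevalley), and the differentials $(d\phi_k)_h$ restricted to $\fh$ are exactly the differentials of a set of free generators of $P(\fh)^W$, which are linearly independent at the regular point $h$ by the classical fact that the Jacobian of the basic $W$-invariants vanishes precisely on the reflection hyperplanes; since $\fz_\fg(h) = \fh$ and the gradients $\nabla\phi_k(h)$ lie in $\fh$, linear independence on $\fh$ gives linear independence of the $(d\phi_k)_h$ in $\fg^*$. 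This handles the regular semisimple case. Finally, to pass from regular semisimple to arbitrary regular $x$: the set of regular elements is irreducible and connected, the regular semisimple ones are dense in it, and $Z$ is closed; but $Z$ meeting the regular locus in a proper closed subset would still need to be excluded. Here the codimension-$\geq 3$ estimate on the non-regular locus enters: one shows $Z \cap \fg^{\mathrm{reg}}$, if nonempty, would be a divisor (the wedge is a single section of a line bundle, so its zero locus is pure codimension one where nonempty), and one derives a contradiction with the freeness of $P(\fg)$ over $P(\fg)^G$ — the branch locus of the quotient map $\fg \to \fg/\!/G \cong \C^l$ is exactly $Z$, and Kostant shows this branch locus, intersected with $\fg^{\mathrm{reg}}$, is empty because the quotient morphism is smooth on $\fg^{\mathrm{reg}}$.

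I expect the main obstacle to be the converse direction, and specifically pinning down cleanly the fact that the quotient morphism $\pi = (\phi_1,\dots,\phi_l) : \fg \to \C^l$ is submersive at every regular element. One honest way to see this: $\pi$ is flat (by freeness of $P(\fg)$ over $P(\fg)^G$), its fibers have the expected dimension $\dim\fg - l$, each fiber $\pi^{-1}(c)$ is a finite union of $G$-orbits and contains a unique open dense regular orbit whose tangent space at a regular point $x$ is $[\fg,x]$ of dimension $\dim\fg - l$; since $\ker(d\pi_x) \supseteq T_x(\pi^{-1}(c)) \supseteq [\fg, x]$ and the ranks match, $d\pi_x$ has full rank $l$ at regular $x$, which is exactly the nonvanishing of the wedge. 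In the write-up I would simply cite Kostant [K, pg 382] for this, since the excerpt already invokes it, and keep the argument to the structural outline above rather than reproving Kostant's differential-of-invariants results from scratch.
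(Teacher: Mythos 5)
The paper does not actually prove Theorem \ref{thm:reg}: it is recalled verbatim from Kostant [K, pg 382] and used as a black box, and the proof environment that follows it in the source belongs to Theorem-Definition \ref{d:sreg}. So there is no in-text argument to compare against, and the pragmatic choice you make at the end of your write-up --- simply cite [K] --- is exactly what the paper does.

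Taking your outline on its own terms, however, the direction you call easy (wedge nonzero $\Rightarrow$ $x$ regular) has a genuine gap. From $\mathrm{span}\{\nabla\phi_1(x),\dots,\nabla\phi_l(x)\}\subseteq\fz_\fg(x)$, linear independence of the $l$ gradients, and $\dim\fz_\fg(x)\geq l$, you conclude $\dim\fz_\fg(x)=l$. But these three facts only re-derive the inequality $\dim\fz_\fg(x)\geq l$; nothing you have cited rules out an independent $l$-tuple of gradients sitting strictly inside a larger centralizer. Concretely, for $x$ the diagonal matrix with entries $(1,1,2)$ in $\fgl(3)$ the centralizer has dimension $5>3$, and the gradients $I,\,2x,\,3x^2$ lie in it; the content of the theorem is that they must then be \emph{dependent}, and that does not follow from the containment alone. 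What you actually need is Kostant's stronger statement that the gradients span all of $\fz_\fg(x)$ exactly when $x$ is regular --- but that is the theorem itself, not an elementary corollary of the containment. Both directions therefore carry the same depth. Your sketch of the converse via flatness of the adjoint quotient $\pi=(\phi_1,\dots,\phi_l)$, openness and density of the regular orbit in each fiber, and the inclusion $\ker d\pi_x\supseteq[\fg,x]$ is the correct framework, and once one grants Kostant's result that each scheme-theoretic fiber $\pi^{-1}(c)$ is a reduced complete intersection of dimension $\dim\fg-l$, that framework in fact delivers both implications at once: $\operatorname{rank} d\pi_x=l$ iff $\pi^{-1}(c)$ is smooth at $x$ iff $\dim T_x\pi^{-1}(c)=\dim\fg-l$, and the last condition is equivalent to $\dim[\fg,x]=\dim\fg-l$, i.e.\ to $x$ being regular. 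I would fold the first direction into that argument rather than present it as a short independent step.
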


\begin{proof}
Suppose $x\in\sreg$.  Then the differentials $(df_{i,j})_{x}$ are linearly independent at $x$.  Let $q_{i,j}=f_{i,j} |_{\orbx}$, with $\orbx$ the adjoint orbit containing $x$.  To show that the $A$-orbit of $x$ is of dimension $d$, it suffices to show that the tangent vectors $(\xifij)_{x}\in T_{x}(\orbx)$ for $1\leq i\leq n-1, \, 1\leq j\leq r_{i}$ are linearly independent.  This follows from the penultimate statement in Theorem \ref{thm:GZgroup}.  Because $\orbx$ is symplectic, the independence of the tangent vectors  $(\xifij)_{x}\in T_{x}(\orbx)$ is equivalent to the independence of the differentials $\{(dq_{i,j})_{x},\, 1\leq i\leq n-1, 1\leq j\leq r_{i}\}$.  Suppose to the contrary that the differentials $\{(dq_{i,j})_{x},\, 1\leq i\leq n-1, 1\leq j\leq r_{i}\}$ are linearly dependent.  This implies that there exist constants $c_{i,j}, 1\leq i\leq n-1, 1\leq j\leq r_{i}$ not all $0$ such that the sum $\sum_{1\leq i\leq n-1, 1\leq j\leq r_{i}} c_{i,j}\,(df_{i,j})_{x}\in T_{x}(\orbx)^{\perp}$, where $T_{x}(\orbx)^{\perp}\subset T_{x}^{*}(\fg_{n})$ is the annhilator of $T_{x}(\orbx)$.  Since $x$ is strongly regular, the differentials $\{(df_{n,j})_{x}, 1\leq j\leq r_{n}\}$ are independent, so by Theorem \ref{thm:reg}, $x\in\fg_{n}$ is regular.  It follows that the set $\{(df_{n,j})_{x}, 1\leq j\leq r_{n}\}$ forms a basis of $ T_{x}(\orbx)^{\perp}$.  But this implies the existence of a non-trivial linear combination of the differentials $\{(df_{i,j})_{x},\, 1\leq i\leq n-1, 1\leq j\leq r_{i}\}$ and the differentials $\{(df_{n,j})_{x},\, 1\leq j\leq r_{n}\}$, contradicting the fact that $x\in\sreg$.  

Now, suppose that the $A$-orbit through $x$ has dimension $d$.  This is equivalent to the tangent vectors $\{(\xifij)_{x}| \; 1\leq i\leq n-1, 1\leq j\leq r_{i}\}$ being linearly independent in $T_{x}(\orbx)$.  The Poisson commutativity of the functions $f_{i,j}$ gives that $V_{x}$ is an isotropic subspace of the symplectic vector space $T_{x}(\orbx)$.  It follows that $\dim\orbx= \dim V_{x}+\dim V_{x}^{\perp}\geq 2d$.  We recall that $2d$ is the maximal dimension of an adjoint orbit in $\fg_{n}$, and therefore the inequality is forced to be equality and $x$ is regular.  By Theorem \ref{thm:reg} the differentials $\{(df_{n,j})_{x},\, 1\leq j\leq r_{n}\}$ are linearly independent.  Thus, the differentials $\{(df_{i,j})_{x},\, 1\leq i\leq n-1, 1\leq j\leq r_{i}\}$ and $\{(df_{n,j})_{x},\, 1\leq j\leq r_{n}\}$ are linearly independent.  It follows easily that the differentials $\{(df_{i,j})_{x},\, 1\leq i\leq n, 1\leq j\leq r_{i}\}$ are independent, and therefore $x$ is strongly regular.  
\end{proof}

Using the penultimate statement in Theorem \ref{thm:GZgroup} and Theorem-Definition \ref{r:sreg}, we obtain 
\begin{equation}\label{eq:sreg}
x\in \fgn^{sreg}\Leftrightarrow \dim V_{x}=d,
\end{equation}
where $V_{x}\subset T_{x}(\fgn)$ is as in (\ref{eq:easydist}) and (\ref{eq:odist}).  

 The connection between polarizations of regular adjoint orbits and $\fgn^{sreg}$ is contained in the following proposition.
 \begin{prop}\label{prop:lag}
 
Let $\orbx$ be the adjoint orbit containing $x\in\fgn$.  Let $\orbx^{sreg}=\orbx\cap\sreg$.  If $\orbx^{sreg}\neq\emptyset$, then $x$ is regular.  In this case $\orbx^{sreg}$ is  Zariski open in $\orbx$ and is therefore a symplectic manifold.  Moreover, $\orbx^{sreg}$ is a union of $A$-orbits of dimension $d=\frac{1}{2}\dim \orbx$, which are necessarily Lagrangian submanifolds of $\orbx^{sreg}$.  Thus, the $A$-orbits in $\orbx^{sreg}$ form the leaves of a polarization on $\orbx^{sreg}$.  
\end{prop}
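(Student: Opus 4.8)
The plan is to prove each assertion in turn, using the already established equivalences in Theorem--Definition \ref{d:sreg} and the basic structure of the Gelfand--Zeitlin setup. First I would show that $\orbx^{sreg}\neq\emptyset$ forces $x$ to be regular: if $y\in\orbx^{sreg}$, then $y$ is strongly regular, hence regular by Theorem--Definition \ref{d:sreg} (or directly by Theorem \ref{thm:reg}, since strong regularity entails independence of $\{(df_{n,j})_y\}$); since regularity is a property of the adjoint orbit (the dimension of the centralizer is constant on $\orbx$), every element of $\orbx$, in particular $x$, is regular. Next, to see that $\orbx^{sreg}$ is Zariski open in $\orbx$, I would note that strong regularity is defined by the non-vanishing of the ${d+r_n\choose 1}$-fold (more precisely, of a maximal-rank minor of the) wedge $(df_{1,1})_x\wedge\cdots\wedge(df_{n,r_n})_x$, which is an algebraic condition; restricting these polynomial functions to the smooth affine variety $\orbx$, the locus where the restricted differentials attain their maximal rank is Zariski open, and on $\orbx^{sreg}$ this rank is the full $d+r_n$ (the $r_n$ coming from $\orbx$ being regular, the remaining $d$ from strong regularity along the orbit directions). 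Being a nonempty Zariski open subset of the symplectic manifold $\orbx$, the set $\orbx^{sreg}$ inherits the KKS symplectic structure.

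Then I would address the Lagrangian claim. The action of $A$ stabilizes adjoint orbits by the last sentence of Theorem \ref{thm:GZgroup}, so $A$ acts on $\orbx$ and preserves $\orbx^{sreg}$ (strong regularity is $A$-invariant since it is equivalent to the $A$-orbit having maximal dimension $d$). For $x\in\orbx^{sreg}$, the tangent space to the $A$-orbit at $x$ is exactly $V_x\subset T_x(\orbx)$ by the penultimate statement of Theorem \ref{thm:GZgroup}, and $\dim V_x=d$ by (\ref{eq:sreg}). Since $2d=\dim\orbx$, it remains only to observe that $V_x$ is isotropic: this is immediate from the Poisson commutativity of the functions $f_{i,j}$, because for $f,g\in J(\fg_n)$ one has $\omega_x((\xi_f)_x,(\xi_g)_x)=\{f,g\}(x)=0$, so $\omega$ restricted to $V_x$ vanishes. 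An isotropic subspace of dimension exactly half the dimension of the symplectic vector space is Lagrangian, so each $A$-orbit in $\orbx^{sreg}$ is a Lagrangian submanifold.

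Finally, to conclude that the $A$-orbits form the leaves of a polarization on $\orbx^{sreg}$, I would package the distribution $z\mapsto V_z$ on $\orbx^{sreg}$: it has constant rank $d$ there (by (\ref{eq:sreg}), every point of $\orbx^{sreg}$ is strongly regular), it is Lagrangian fibrewise by the previous paragraph, and it is integrable because it is spanned by the pairwise-commuting vector fields $\xi_{f_{i,j}}$ (Frobenius, or simply the fact that these fields integrate to the $A$-action of Theorem \ref{thm:GZgroup}, whose orbits are precisely the integral manifolds and which exhaust $\orbx^{sreg}$). Hence $V|_{\orbx^{sreg}}$ is a polarization in the sense defined in Section \ref{s:sreg}, and the $A$-orbits are its leaves.

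The main obstacle I expect is the bookkeeping in the Zariski-openness step: one must be careful that strong regularity along $\orbx$ is cut out by an algebraic (not merely analytic) condition and that the maximal rank of the relevant collection of differentials, restricted to $\orbx$, is attained exactly on a Zariski open set and equals $d+r_n$ there rather than jumping. Everything else reduces cleanly to the already-proven equivalences and to the elementary linear-algebra fact that a middle-dimensional isotropic subspace is Lagrangian.
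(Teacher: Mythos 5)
Your proof is correct and takes essentially the same approach as the paper: deduce regularity of $x$ from strong regularity of some $y\in\orbx$, then use Poisson commutativity of $J(\fg_n)$ to get isotropy of $V_y$ and the dimension count $\dim V_y=d=\tfrac12\dim\orbx$ to conclude each $A$-orbit is Lagrangian. The paper's proof is terser and does not spell out Zariski openness or integrability; one small wrinkle in your write-up is the claim that the differentials restricted to $\orbx$ attain rank $d+r_n$ there, which cannot be right since $(df_{n,j})|_{\orbx}=0$, but the correct and simpler observation you also invoke, namely that $\orbx^{sreg}=\orbx\cap\fg_n^{sreg}$ with $\fg_n^{sreg}$ cut out by the nonvanishing of the wedge $(df_{1,1})\wedge\cdots\wedge(df_{n,r_n})$ in $\fg_n$, already gives Zariski openness.
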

\begin{proof}
If $\mathcal{O}_{x}^{sreg}$ is non-empty, then by Proposition \ref{prop:sreg} it is clear that $x$ is regular. 
For $y\in\mathcal{O}_{x}^{sreg}$, $T_{y}(A\cdot y)$ is Lagrangian, since it is isotropic and of dimension exactly half the dimension of the ambient manifold $\mathcal{O}_{x}^{sreg}$.  Thus, the $A$-orbits in $\mathcal{O}_{x}^{sreg}$ are Lagrangian submanifolds of $\mathcal{O}_{x}^{sreg}$, and we have our desired polarization.
\end{proof}

\begin{rem}\label{r:sreg}
The corresponding result in [KW1] is stronger than the result stated here.  It also states that if $x$ is regular in $\fgl(n)$, then $\orbx^{sreg}$ is non-empty.  Thus, $\fgl(n)^{sreg}$ is non-empty, and any regular adjoint orbit in $\fgl(n)$ possesses a dense, open submanifold which is foliated by Lagrangian submanifolds.  However, it is not clear that the same result holds in the case of $\fso(n)$.  In section \ref{s:orthogen}, we will construct polarizations of certain regular semisimple adjoint orbits in $\fso(n)$.  
\end{rem} 
 
We now give a more concrete characterization of strongly regular elements.  
\begin{prop}\label{prop:sreg}
Let $x\in\fg_{n}$ and let $\fz_{\fg_{i}}(x_{i})$ denote the centralizer in $\fg_{i}$ of $x_{i}$.  Then $x$ is strongly regular if and only if the following two conditions hold.
\begin{itemize}
\item (a) $x_{i}\in\fg_{i}$ is regular for all $i$, $1\leq i\leq n$.  
\item (b) $\fz_{\fg_{i}}(x_{i})\cap \fz_{\fg_{i+1}}(x_{i+1})=0$ for all $1\leq i\leq n-1$.
\end{itemize}
\end{prop}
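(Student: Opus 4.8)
The plan is to reformulate strong regularity in terms of the tangent space $V_x$ to the $A$-orbit, using the characterization $x\in\fg_n^{sreg}\Leftrightarrow\dim V_x=d$ from \eqref{eq:sreg}, and then to analyze $V_x$ summand-by-summand as the filtration index $i$ runs from $1$ to $n-1$. Recall from \eqref{eq:odist} (and \eqref{eq:easydist} in the linear case) that $V_x=\sum_{i=1}^{n-1}V_x^{(i)}$, where $V_x^{(i)}=\mathrm{span}\{\partial_x^{[\nabla f_{i,j}(x_i),x]}\mid 1\le j\le r_i\}$. The key point is that $\nabla f_{i,j}(x_i)$ ranges (as $j$ varies) over a spanning set of $\nabla P(\fg_i)^{G_i}$ evaluated at $x_i$, and by Kostant's theorem (Theorem \ref{thm:reg}) the differentials $(df_{i,j})_{x_i}$ are independent exactly when $x_i$ is regular; under the trace-form identification this says the gradients $\nabla f_{i,j}(x_i)$ are linearly independent in $\fz_{\fg_i}(x_i)$, and since $\dim\fz_{\fg_i}(x_i)\ge r_i$ always, regularity of $x_i$ is equivalent to $\{\nabla f_{i,j}(x_i)\}_{j=1}^{r_i}$ being a basis of $\fz_{\fg_i}(x_i)$.

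The first step is to show that condition (a) is necessary: if some $x_i$ is not regular, I would argue that already the partial collection $\{(df_{i',j})_x\mid i'\le i\}$ fails to be independent. Indeed each $f_{i',j}$ with $i'\le i$ depends only on $x_i$ (it factors through the cutoff $x_{i'}$, hence through $x_i$), so these differentials all lie in the pullback of $T_{x_i}^*(\fg_i)$; they span a space of dimension $\sum_{i'\le i}r_{i'}$ only if the corresponding functions on $\fg_i$ are independent there, which by Theorem \ref{thm:reg} applied successively forces $x_{i'}$ regular for all $i'\le i$. (Alternatively, and more cleanly, one uses that $f_{i,1},\dots,f_{i,r_i}$ among these are the top-level invariants, so their independence at $x$ already needs $x_i$ regular.) Thus $x\in\fg_n^{sreg}$ forces (a).

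Granting (a), the second step computes $\dim V_x^{(i)}$. Since $x_i$ is regular, $\{\nabla f_{i,j}(x_i)\}_j$ is a basis of $\fz_{\fg_i}(x_i)$, so $V_x^{(i)}=\{\partial_x^{[z,x]}\mid z\in\fz_{\fg_i}(x_i)\}$, and $\dim V_x^{(i)}=\dim\fz_{\fg_i}(x_i)-\dim(\fz_{\fg_i}(x_i)\cap\fz_{\fg_n}(x))=r_i-\dim(\fz_{\fg_i}(x_i)\cap\fz_{\fg_n}(x))$, because $z\mapsto[z,x]$ has kernel $\fz_{\fg_n}(x)$. Now a short lemma (to be proved) identifies $\fz_{\fg_i}(x_i)\cap\fz_{\fg_n}(x)$: an element $z\in\fg_i$ commuting with $x$ must in particular commute with $x_{i+1}$ (project the identity $[z,x]=0$ onto $\fg_{i+1}$, using that $z$ sits in the top-left $i\times i$ block), and conversely; iterating, $\fz_{\fg_i}(x_i)\cap\fz_{\fg_n}(x)=\fz_{\fg_i}(x_i)\cap\fz_{\fg_{i+1}}(x_{i+1})\cap\cdots$, but again by a block argument this chain collapses so that $\fz_{\fg_i}(x_i)\cap\fz_{\fg_n}(x)=\fz_{\fg_i}(x_i)\cap\fz_{\fg_{i+1}}(x_{i+1})$. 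The last identity is the technical heart: one must check that if $z\in\fg_i$ commutes with both $x_i$ and $x_{i+1}$ then it automatically commutes with $x_n=x$, which follows by downward induction on the block structure — commuting with $x_{i+1}$ and lying in $\fg_i$ pins down enough of the off-diagonal entries of $x$.

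The final step assembles everything. By isotropy of $V_x$ in the symplectic space $T_x(\orbx)$ and the fact (from the proof of Theorem-Definition \ref{d:sreg}) that $V_x^{(i)}$ for distinct $i$ meet only in a controlled way, one gets $\dim V_x=\sum_{i=1}^{n-1}\bigl(r_i-\dim(\fz_{\fg_i}(x_i)\cap\fz_{\fg_{i+1}}(x_{i+1}))\bigr)=d-\sum_{i=1}^{n-1}\dim(\fz_{\fg_i}(x_i)\cap\fz_{\fg_{i+1}}(x_{i+1}))$, using $\sum_{i=1}^{n-1}r_i=d$ from \eqref{eq:countd}. Hence $\dim V_x=d$ if and only if every intersection $\fz_{\fg_i}(x_i)\cap\fz_{\fg_{i+1}}(x_{i+1})$ vanishes, i.e. condition (b) holds; combined with \eqref{eq:sreg} this is exactly the assertion. (One subtlety to handle carefully: the summands $V_x^{(i)}$ need not be independent inside $T_x(\orbx)$, so the displayed dimension count must be justified — most transparently by working not with $V_x$ directly but with the span of the differentials $(df_{i,j})_x$ in $T_x^*(\fg_n)$ and showing a linear dependence among them forces a nonzero element of some $\fz_{\fg_i}(x_i)\cap\fz_{\fg_{i+1}}(x_{i+1})$, reversing the argument in the proof of Theorem-Definition \ref{d:sreg}.) I expect this bookkeeping — pinning down $\fz_{\fg_i}(x_i)\cap\fz_{\fg_n}(x)=\fz_{\fg_i}(x_i)\cap\fz_{\fg_{i+1}}(x_{i+1})$ and controlling overlaps of the $V_x^{(i)}$ — to be the main obstacle; everything else is Kostant's theorem plus the arithmetic identity \eqref{eq:countd}.
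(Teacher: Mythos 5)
Your approach — computing $\dim V_x$ as a sum of the layer-by-layer contributions $V_x^{(i)}=\{\partial_x^{[z,x]}\mid z\in\fz_{\fg_i}(x_i)\}$ and controlling each $\dim V_x^{(i)}$ via the kernel of $z\mapsto[z,x]$ — is genuinely different from the paper's, which works on the cotangent side by showing directly that the sum $\sum_{i=1}^n\fz_{\fg_i}(x_i)$ is direct when (b) holds. Unfortunately your route has two gaps, one of which is a false claim.

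The ``short lemma'' $\fz_{\fg_i}(x_i)\cap\fz_{\fg_n}(x)=\fz_{\fg_i}(x_i)\cap\fz_{\fg_{i+1}}(x_{i+1})$ is wrong. Only the inclusion $\subseteq$ holds. Writing $x$ in $2\times 2$ block form with respect to $\C^n=\C^i\oplus\C^{n-i}$ as $\left(\begin{smallmatrix}x_i&B\\C&*\end{smallmatrix}\right)$ and $z\in\fg_i$ as $\left(\begin{smallmatrix}z_i&0\\0&0\end{smallmatrix}\right)$, the condition $[z,x]=0$ requires $z_iB=0$ and $Cz_i=0$ for the full blocks $B,C$, whereas $[z,x_{i+1}]=0$ constrains only the first column of $B$ and first row of $C$. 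A concrete counterexample with all $x_k$ regular: in $\fgl(3)$ take $x=\left(\begin{smallmatrix}a&0&1\\0&b&0\\0&0&c\end{smallmatrix}\right)$ with $a,b,c$ distinct; then $e_{11}$ lies in $\fz_{\fgl(1)}(x_1)\cap\fz_{\fgl(2)}(x_2)$ but $[e_{11},x]=e_{13}\ne0$, so it does not lie in $\fz_{\fgl(3)}(x)$. So the formula $\dim V_x^{(i)}=r_i-\dim\bigl(\fz_{\fg_i}(x_i)\cap\fz_{\fg_{i+1}}(x_{i+1})\bigr)$ is not correct; the honest formula has $\fz_{\fg_n}(x)$ in the intersection, and that intersection can be smaller. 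The second gap is one you flag yourself: even granting a correct formula for each $\dim V_x^{(i)}$, the identity $\dim V_x=\sum_i\dim V_x^{(i)}$ requires the subspaces $V_x^{(i)}\subset T_x(\orbx)$ to be independent, which is exactly what needs proving. (In the example above, $V_x^{(1)}$ and $V_x^{(2)}$ are both $\C\cdot e_{13}$.) Your closing parenthetical — pass to the differentials $(df_{i,j})_x$ and show a linear dependence forces a nonzero element of some $\fz_{\fg_i}(x_i)\cap\fz_{\fg_{i+1}}(x_{i+1})$ — is precisely what the paper does, and is where the real content lives: given a nontrivial relation $\sum_j z_{i_j}=0$ with $z_{i_j}\in\fz_{\fg_{i_j}}(x_{i_j})$, one projects $[\,\cdot\,,x]$ onto the $(i_1+1)\times(i_1+1)$ cutoff (where $i_1$ is the least index appearing) and uses that for $j>1$ the bracket $[z_{i_j},x]$ lands in $\fg_{i_j}^\perp\subset\fg_{i_1+1}^\perp$, so the relation forces $z_{i_1}\in\fz_{\fg_{i_1}}(x_{i_1})\cap\fz_{\fg_{i_1+1}}(x_{i_1+1})$. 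That block-projection step, applied to the lowest index, is the idea your outline is missing; without it the proposal does not close.
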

We will make use of only part of this proposition, namely that if $x\in\fg_{n}$ is strongly regular, then $x_{i}$ is regular for all $i$.  However, we prove the proposition in its entirety for completeness.  

\begin{proof}
Suppose that $x\in\sreg$, then by Theorem-Definition \ref{d:sreg} the differentials $\{(df_{i,j})_{x},\, 1\leq i\leq n, 1\leq j\leq r_{i}\}$ are linearly independent.  In particular for each $i, \, 1\leq i\leq n$ the differentials $\{(df_{i,j})_{x},\, 1\leq j\leq r_{i}\}$ are independent, which implies that $x_{i}$ is regular for all $i$ by Theorem \ref{thm:reg}.  The elements $\{\nabla f_{i,j}(x),\, 1\leq j\leq r_{i}\}$ then form a basis for the centralizer $\fz_{\fg_{i}}(x_{i})$.  The linear independence of the elements $\{\nabla f_{i,j}(x),\, 1\leq i\leq n, 1\leq j\leq r_{i}\}$ then implies the sum $\sum_{i=1}^{n} \fz_{\fg_{i}}(x_{i})$ is direct, which implies (b).  
  
  Now, suppose that both (a) and (b) hold.  We claim (b) implies that the sum 
  \begin{equation}\label{eq:centsum}
  \sum_{i=1}^{n} \fz_{\fg_{i}}(x_{i})
  \end{equation}
   is direct.  Suppose to the contrary that we have an increasing sequence $\{1\leq i_{1}  < \cdots < i_{m}\leq n\}$ and elements $z_{i_{j}}\neq 0\in\fz_{\fg_{i_{j}}}(x_{i_{j}})$ with the property that 
  \begin{equation}\label{eq:firstcentralizer}
 \sum_{j=1}^{m} z_{i_{j}}=0.
  \end{equation}
We claim this forces
\begin{equation}\label{eq:part}
[z_{i_{j}} , x]_{i_{1}+1}=0
\end{equation}
for $j>1$.  To see this, we make use of the decomposition $\fgn=\fg_{i_{j}}\oplus \fg_{i_{j}}^{\perp}$ (see (\ref{eq:sum})).  The component of $x$ in $\fg_{i_{j}}$ is clearly $x_{i_{j}}$, but $[ z_{i_{j}} , x_{i_{j}}]=0$.  Since $\ad\, \fg_{i_{j}} $ stabilizes the components of the above decomposition, we have $[z_{i_{j}}, x]\in \fg_{i_{j}}^{\perp}$.    
Now, since $i_{j}\geq i_{1}+1$, we have $\fg_{i_{j}}^{\perp}\subseteq \fg_{i_{1}+1}^{\perp}$, yielding equation (\ref{eq:part}).  Equations (\ref{eq:firstcentralizer}) and (\ref{eq:part}), then imply 
$$
[z_{i_{1}}, x]_{i_{1}+1}=0.
$$
But $z_{i_{1}} \in \fg_{i_{1}}$, and therefore
\begin{equation}\label{eq:etc}
[z_{i_{1}} , x_{i_{1}+1}]=[z_{i_{1}}, x]_{i_{1}+1}=0.
\end{equation}
Thus, $z_{i_{1}}\in\fz_{\fg_{i_{1}}}(x_{i_{1}})\cap \fz_{\fg_{i_{1}+1}}(x_{i_{1}+1})=0$, which is a contradiction.  From (a) and Theorem \ref{thm:reg}, it follows that the differentials $\{(d f_{i,j})_{x},\, 1\leq j\leq r_{i}\}$ are linearly independent for each $i$, $1\leq i\leq n$.   The fact the sum in (\ref{eq:centsum}) is direct then implies the entire set of differentials $\{(d f_{i,j})_{x},\, 1\leq i\leq n, \,1\leq j\leq r_{i}\}$ is linearly independent.  Thus, $x$ is strongly regular.  


  
\end{proof}

We conclude this section with a technical result about strongly regular orbits that will be of use to us in section \ref{ss:moment}. 
\begin{prop}\label{prop:constr}
Let $x\in\sreg$.  Let $Z_{G_{i}}(x_{i})$ denote the centralizer in $G_{i}$ of $x_{i}$.   Consider the morphism of affine algebraic varieties
$$
\begin{array}{c}
\psi: Z_{G_{1}}(x_{1})\times Z_{G_{2}}(x_{2})\times\cdots\times Z_{G_{n-1}}(x_{n-1})\to \fg_{n},\\
\\
\psi(g_{1},\cdots ,g_{n-1})=\Ad(g_{1})\Ad(g_{2})\cdots \Ad(g_{n-1})\cdot x.
\end{array}
$$
 The image of $\psi$ is exactly the $A$-orbit of $x$, $A\cdot x$.  Hence $A\cdot x$ is an irreducible, Zariski constructible subset of $\fgn$.  
\end{prop}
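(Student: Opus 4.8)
The plan is to show the two inclusions $\psi(Z_{G_1}(x_1)\times\cdots\times Z_{G_{n-1}}(x_{n-1}))\subseteq A\cdot x$ and $A\cdot x\subseteq \operatorname{Im}\psi$ separately, and then harvest the conclusion about constructibility from Chevalley's theorem. The first inclusion is essentially the observation already embedded in the proof of Theorem~\ref{thm:GZgroup}: each flow $\theta(t,y)=\Ad(\exp(-t\,\nabla f_{i,j}(y_i)))\cdot y$ has the property that $\exp(-t\,\nabla f_{i,j}(y_i))$ lies in $Z_{G_i}(y_i)$, because $\nabla f_{i,j}(y_i)\in\fz_{\fg_i}(y_i)$. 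So the entire $A$-orbit of $x$, being obtained by composing such flows in all possible orders, is swept out by products $\Ad(g_1)\cdots\Ad(g_{n-1})\cdot x$ with $g_k$ in the relevant centralizer — one just has to keep track of the fact that after applying the flows at level $i$ the $i\times i$ cutoff is unchanged, so the centralizer conditions at all levels are preserved along the way. This shows $A\cdot x\subseteq\operatorname{Im}\psi$; running the argument in reverse (each factor $\Ad(g_k)$ with $g_k\in Z_{G_k}(x_k)$ is reached by composing flows of the $\xi_{f_{k,j}}$) gives $\operatorname{Im}\psi\subseteq A\cdot x$ and hence equality.

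The point where I expect to have to be careful — and which I regard as the main obstacle — is the reverse direction: showing that an \emph{arbitrary} element $g_k\in Z_{G_k}(x_k)$, not merely one of the special form $\exp(-t\,\nabla f_{k,j}(x_k))$, is realized by the flows. Here one uses that $x$ is strongly regular, so by Proposition~\ref{prop:sreg}(a) each $x_k$ is regular in $\fg_k$; for a regular element the centralizer $\fz_{\fg_k}(x_k)$ is abelian and is exactly spanned by $\{\nabla f_{k,j}(x_k)\mid 1\le j\le r_k\}$ (this is the infinitesimal content of Kostant's Theorem~\ref{thm:reg}). Exponentiating, the connected subgroup of $Z_{G_k}(x_k)$ generated by the one-parameter subgroups $t\mapsto\exp(t\,\nabla f_{k,j}(x_k))$ is all of the identity component $Z_{G_k}(x_k)^\circ$. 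One then needs the standard fact that the centralizer of a regular element in the adjoint group is connected (for $GL$ this is classical; for $SO(n)$ one can cite it or note that a regular element is the statement that $Z_{G_k}(x_k)=Z_{G_k}(x_k)^\circ$ is a maximal torus — or, more carefully, that $\psi$ only needs to surject onto $A\cdot x$, so if $Z_{G_k}(x_k)$ had extra components the image would simply be larger and still contain $A\cdot x$, which is all that is claimed). Since the proposition only asserts that the image \emph{equals} $A\cdot x$, it suffices that $Z_{G_k}(x_k)^\circ$ suffices to generate the flows and that the extra components — if any — do not take us outside $A\cdot x$; but in fact, as noted, the flows land in $Z_{G_k}(x_k)$, so $\operatorname{Im}\psi\subseteq A\cdot x$ regardless, and equality follows once $Z_{G_k}(x_k)^\circ\subseteq\operatorname{Im}(\text{flows})$.

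Finally, for the last sentence: $Z_{G_k}(x_k)$ is a (closed) affine algebraic group, hence an irreducible (if we use the identity component) or at worst constructible affine variety, and $\psi$ is a morphism of affine varieties given by the polynomial formula $\psi(g_1,\dots,g_{n-1})=\Ad(g_1)\cdots\Ad(g_{n-1})\cdot x$. By Chevalley's theorem the image of a morphism of varieties is constructible, and the image of an irreducible variety is irreducible; combining with the identification $\operatorname{Im}\psi=A\cdot x$ gives that $A\cdot x$ is an irreducible, Zariski constructible subset of $\fg_n$. The one bookkeeping remark to include is that each $Z_{G_k}(x_k)$ is irreducible precisely because $x_k$ is regular (centralizer of a regular semisimple element is a torus; of a general regular element, still connected), which is exactly what Proposition~\ref{prop:sreg}(a) supplies.
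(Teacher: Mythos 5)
Your proposal follows the paper's own proof quite closely: two inclusions, with the forward inclusion coming from the fact that the flows are by $\exp(-t\,\nabla f_{i,j}(x_i))\in Z_{G_i}(x_i)$ (after ordering the factors from level $n-1$ down to level~$1$ so that all gradients are evaluated at the original $x_i$), and the reverse inclusion using regularity of the cutoffs (Proposition~\ref{prop:sreg}) so that $\{\nabla f_{k,j}(x_k)\}$ spans $\fz_{\fg_k}(x_k)$ (Theorem~\ref{thm:reg}), together with connectedness of $Z_{G_k}(x_k)$. The paper cites Kostant [K, Prop.~14] for the fact that the centralizer of a regular element is abelian and connected — exactly the ``standard fact'' you reach for — and then concludes irreducibility and constructibility from Chevalley's theorem, as you do.

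One passage of your write-up is logically confused and should be cut, though it does not break the argument since you also give the correct route. You write that ``the flows land in $Z_{G_k}(x_k)$, so $\operatorname{Im}\psi\subseteq A\cdot x$ regardless'': this is the wrong direction. The flows landing in $Z_{G_k}(x_k)$ establishes $A\cdot x\subseteq\operatorname{Im}\psi$, not the containment you claim. The inclusion $\operatorname{Im}\psi\subseteq A\cdot x$ genuinely \emph{requires} that every $g_k\in Z_{G_k}(x_k)$ be a product of the relevant exponentials, i.e.\ it requires $Z_{G_k}(x_k)=Z_{G_k}(x_k)^\circ$. If the centralizer were disconnected, $\operatorname{Im}\psi$ would be strictly larger than $A\cdot x$ (and, incidentally, possibly reducible, ruining the last sentence of the proposition). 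So there is no ``fallback'': connectedness is unavoidable, and your citation of Kostant's result is the correct — and the paper's — way to supply it. Also, ``centralizer of a regular semisimple element is a torus'' is beside the point here; strongly regular $x$ need not have semisimple cutoffs, so one must use the connectedness statement for general regular elements.
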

\begin{proof}
We first show $A\cdot x\subset Im\psi$, where $Im\psi$ denotes the image of the morphism $\psi$.  Let $x\in\fgn$.  For $\underline{t}\in A=\Co^{d}$, we write $\underline{t}=(t_{1,1},\cdots, t_{i,j},\cdots,  t_{n-1, r_{n-1}})$ with $t_{i,j}\in\Co$.  In these coordinates, the action of the $(i,j)$-th coordinate $v_{i,j}=(0,\cdots, t_{i,j}, 0, \cdots, 0)$ on $x$ is given by the flow of $\xifij$ as in (\ref{eq:action})
$$
\theta(t_{i,j},\, x)=\Ad(\exp(-t_{i,j}\, \nabla f_{i,j}(x_{i})))\cdot x.
$$
We noted in the proof of Theorem \ref{thm:GZgroup} that this action of $\Co$ centralizes $x_{i}$.  Thus, the action of $\underline{t}$ on $x$ is
\begin{equation}\label{eq:orbit}
\underline{t}\cdot x=\Ad(\exp(-t_{1,1}\, \nabla f_{1,1}(x_{1})))\cdots \Ad(\exp(-t_{n-1, r_{n-1}}\, \nabla f_{n-1, r_{n-1}}(x_{n-1})))\cdot x.
\end{equation} 
The expression in (\ref{eq:orbit}) is in $Im\psi$, because $\nabla f_{i,j}(x_{i})\in\fz_{\fg_{i}}(x_{i})$, and therefore $\exp(c\nabla f_{i,j}(x_{i}))\in Z_{G_{i}}(x_{i})$ for any $c\in \Co$.  

We now prove $Im\psi\subset A\cdot x$.  To show this inclusion, we make use of the fact that $x$ is strongly regular.  By Proposition \ref{prop:sreg}, $x_{i}$ is regular for all $i$.  A basic result of Kostant (Proposition 14 in [K]) says that $Z_{G_{i}}(x_{i})$ is an abelian, connected algebraic group.  Since $Z_{G_{i}}(x_{i})$ is a connected algebraic group over $\Co$, it is also connected as a complex Lie group (see Theorem 11.1.22 in [GW]).  Thus, given $g_{i}\in Z_{G_{i}}(x_{i})$, $g_{i}=\exp (V)$ with $V\in Lie(Z_{G_{i}}(x_{i}))=\fz_{\fg_{i}}(x_{i})$.  Since $x_{i}\in\fg_{i}$ is regular, $V=\sum_{j=1}^{r_{i}} c_{i,j} \nabla f_{i,j}(x_{i})$ with $c_{i,j}\in\Co$ by Theorem \ref{thm:reg}.  This implies $g_{i}=\exp(c_{i,1} \nabla f_{i,1}(x_{i}))\cdots \exp(c_{i,r_{i}}\nabla f_{i, r_{i}} (x_{i})) $.  Repeating this argument for each $g_{i}$ for $1\leq i\leq n-1$ and using (\ref{eq:orbit}), we obtain $Im \psi\subset A\cdot x$.  The last statement of the theorem follows from the fact that the image of a morphism is a Zariski constructible set [Hum].  The image of an irreducible variety under a morphism is also irreducible. 
\end{proof}

 
\begin{rem}
As mentioned at the beginning of the section, the proof of Proposition \ref{prop:constr} presented here differs from the one in [KW1, Theorem 3.7].   They prove a stronger result for $\fgn=\fgl(n)$ that does not require that $x$ is strongly regular.  However, we will only need the strongly regular case.
\end{rem}

We now turn our attention to the study of the action of the group $A$ on $\sreg$.  One way to approach this is to study the moment map for the group $A$.  The connected components of regular level sets of this map are orbits of strongly regular elements under the action of $A$.  The next section discusses the properties of this map.


\subsection{The moment map for the $A$-action}\label{ss:moment}
	
	We now study the map $\Phi: \fgn\to \Co^{d+ r_{n}}$ defined by
	\begin{equation}\label{eq:map}
	\Phi(x)=(f_{1,1}(x_{1}), f_{2,1}(x_{2}),\cdots, f_{n, r_{n}}(x)).
	\end{equation}
For $c\in\Co^{d+r_{n}}$ denote $\Phi^{-1}(c)=(\mathfrak{g}_{n})_{c}$.   It is a basic fact from Poisson geometry that the action the group $A$ preserves the fibres $\fibre$.  Let us denote the open subset of strongly regular elements in the fibre $(\mathfrak{g}_{n})_{c}\cap \mathfrak{g}_{n}^{sreg}$ by $\sfibre$.   One of the deep results in [KW1] is that for $\fg_{n}=\fgl(n)$, $\sfibre$ is non-empty for any $c\in\Co^{d+r_{n}}=\Co^{\frac{n(n+1)}{2}}$ (see Theorem 2.3).  This is not necessarily the case for $\fg_{n}=\fso(n)$.  However, we will consider a special class of $c\in\Co^{d+r_{n}}$ for which the statement is true in section \ref{s:generics}.  Given the assumption that $\sfibre\neq\emptyset$, the results we state in the rest of this section carry over to the orthogonal case.  

Let $x\in\sfibre$.  The connected components of $\sfibre$ are $A$-orbits in $\orbx^{sreg}=\orbx\cap\fgn^{sreg}$ and hence are Lagrangian submanifolds of $\orbx^{sreg}$ by Proposition \ref{prop:lag}.  The fibre $\sfibre$ also has the property that the connected components in the Euclidean topology and irreducible components in the Zariski topology coincide so that there are only finitely many orbits of the group $A$ in $\sfibre$. 



\begin{thm}\label{thm:cpts} 
Let $c\in\mathbb{C}^{d+r_{n}}$, with $c=(c_{1,1},\cdots, c_{i,j}, \cdots, c_{n, r_{n}})$, $c_{i,j}\in\Co$.  Let $\sfibre=\bigcup_{i=1}^{N(c)} (\fg_{n}^{sreg})_{c,i}$ be the irreducible component decomposition of the variety $\sfibre$.  Then $\sfibre$ is a smooth variety of pure dimension $d$.  Moreover, the irreducible components $ (\fg_{n}^{sreg})_{c,i}$ are precisely the $A$-orbits in $\sfibre$.  Hence for $x\in\sreg$, $A\cdot x$ is an irreducible, non-singular variety of dimension $d$.  
\end{thm}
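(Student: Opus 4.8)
The plan is to show three things in sequence: (1) the fibre $\sfibre$ is smooth of pure dimension $d$; (2) each irreducible component is a single $A$-orbit; (3) conclude that an arbitrary $A$-orbit $A\cdot x$ with $x$ strongly regular is irreducible, nonsingular, of dimension $d$. The key point feeding all of this is that on $\sreg$ the differentials $\{(df_{i,j})_x\mid 1\le i\le n,\ 1\le j\le r_i\}$ are linearly independent (Theorem-Definition \ref{d:sreg}), so $\Phi$ is a submersion at every strongly regular point.

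For step (1), I would argue as follows. Since $\fgn^{sreg}$ is Zariski open in $\fgn$ (it is the non-vanishing locus of the relevant wedge of differentials, hence open; one can cite Proposition \ref{prop:sreg} to see it is nonempty in the cases we care about), the fibre $\sfibre = \fibre \cap \fgn^{sreg}$ is an open subvariety of $\fibre$. At any $x\in\sfibre$ the $d+r_n$ differentials $(df_{i,j})_x$ are independent, so $\Phi$ is a submersion there; hence $\sfibre$ is a smooth complex manifold of pure dimension $\dim\fgn - (d+r_n)$. One checks this number equals $d$: indeed $\dim\fgn = 2d + r_n$ (since $2d = \dim\orbx = \dim\fgn - r_n$ for a regular orbit $\orbx$, using $(\ref{eq:countd})$), so $\dim\fgn - (d+r_n) = d$. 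Because a smooth variety has its (Euclidean) connected components equal to its irreducible components, we get the decomposition $\sfibre = \bigcup_{i=1}^{N(c)} (\fgn^{sreg})_{c,i}$ into disjoint smooth irreducible pieces, each of dimension $d$, and $N(c)<\infty$.

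For step (2), fix $x\in\sfibre$ and let $(\fgn^{sreg})_{c,i}$ be the component containing it. By Proposition \ref{prop:lag}, the $A$-orbit $A\cdot x$ is a Lagrangian submanifold of $\orbx^{sreg}$, in particular a locally closed submanifold of $\fgn$ of dimension exactly $d$; since the $A$-action preserves fibres of $\Phi$, $A\cdot x \subseteq \sfibre$, and since $A\cdot x$ is connected and $d$-dimensional while the component is connected, smooth, and $d$-dimensional, $A\cdot x$ is open in $(\fgn^{sreg})_{c,i}$. On the other hand, by Proposition \ref{prop:constr}, $A\cdot x$ is a Zariski constructible irreducible subset of $\fgn$; a constructible set that is also open in the irreducible variety containing it is, after passing to closures, all of it up to a lower-dimensional difference — more precisely, $A\cdot x$ contains a dense open subset of $(\fgn^{sreg})_{c,i}$, so $\overline{A\cdot x} \supseteq (\fgn^{sreg})_{c,i}$, whence $\dim(\fgn^{sreg})_{c,i} = \dim\overline{A\cdot x} = d$. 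To upgrade ``open and dense'' to ``equal'', I would use that distinct $A$-orbits in $\sfibre$ are disjoint and each is open in $\sfibre$ (the argument just given applies to every orbit), so each orbit is also closed in $\sfibre$; a nonempty open-and-closed subset of the connected set $(\fgn^{sreg})_{c,i}$ is the whole thing, giving $A\cdot x = (\fgn^{sreg})_{c,i}$.

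Step (3) is then immediate: given any $x\in\sreg$, set $c = \Phi(x)$; then $x\in\sfibre$, and by step (2) the orbit $A\cdot x$ equals an irreducible component of $\sfibre$, which by step (1) is a nonsingular variety of dimension $d$.

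\smallskip
The main obstacle I anticipate is step (2), specifically the transition from ``$A\cdot x$ is open in its irreducible component'' to ``$A\cdot x$ equals it.'' A constructible set need not be open, and a single orbit could a priori be a proper open subset of the component with the complement being another orbit; the resolution is the observation that \emph{every} orbit in $\sfibre$ is open (hence the orbits partition $\sfibre$ into open sets), which forces each to be closed and therefore, by connectedness of the components, to coincide with a component. One must be slightly careful that Proposition \ref{prop:constr}'s constructibility is used only to conclude irreducibility of $A\cdot x$ and hence of its closure; the smoothness and dimension count come from $\Phi$ being a submersion, and the matching of Euclidean with Zariski components on a smooth variety is what glues the two topologies together.
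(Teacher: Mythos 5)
Your proof is correct, and step (2) takes a genuinely different route from the paper's. For step (1) you argue directly that $\Phi$ is a submersion on $\sreg$ (via independence of the $(df_{i,j})_x$) and deduce smoothness and pure dimension $d$ from the implicit function theorem; the paper instead invokes the algebraic version of this statement (Theorem~4 of Mumford's Red Book applied to the regular sequence $f_{i,j}-c_{i,j}$). These are the same mechanism in analytic versus algebraic dress. Step (2) is where the approaches truly diverge: the paper works Zariski-topologically, using Proposition~\ref{prop:constr} to see $A\cdot x$ is irreducible and constructible, deriving $\overline{A\cdot x}=\overline{(\fg_n^{sreg})_{c,i}}$ by a dimension argument through a smooth open subset of $\overline{A\cdot x}$, then getting a contradiction from the fact that two Zariski-open subsets of an irreducible variety must meet. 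You instead make the Euclidean-topological observation that every $A$-orbit in $\sfibre$ is a $d$-dimensional immersed submanifold (Proposition~\ref{prop:lag}, plus $V_y=T_y(\sfibre)$ from the dimension count) sitting inside the $d$-dimensional smooth fibre, so each orbit is open, hence each is also closed (complement is a union of open orbits), hence each coincides with a connected component; identifying Euclidean components with Zariski irreducible components of the smooth variety finishes it. Your argument is cleaner and shorter once the open/closed trick is seen, and it only uses constructibility (Proposition~\ref{prop:constr}) in the final statement that $A\cdot x$ is an irreducible variety --- the middle paragraph of your step (2) about ``dense open up to lower-dimensional difference'' is in fact superfluous, as you yourself note when you replace it by the open/closed argument. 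The paper's argument avoids appealing to the Lie-theoretic fact that an orbit of a holomorphic Lie group action is an immersed complex submanifold with tangent space $V_y$, paying instead with the more involved constructibility/dimension bookkeeping. Both are valid; yours buys elegance at the cost of leaning a bit harder on the unproved (but standard) smooth structure of orbits.
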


\begin{proof}
In this proof overline denotes Zariski closure, unless otherwise stated.  The statement that $\sfibre$ is a smooth variety of pure dimension $d$ follows directly from Theorem 4 in [M, pg 172].  In the notation of that reference take $X=\fgn$, $Y=\overline{\sfibre}$, let $U$ run through all sets in a finite open, affine cover of $\fgn^{sreg}$, and take the functions $f_{k}$ to be $f_{i,j}(x)-c_{i,j}$ for $1\leq i\leq n,\, 1\leq j\leq r_{i}$.


 We now show that each irreducible component $(\fg_{n}^{sreg})_{c,i}$ is an $A$-orbit.  Let $x\in(\fg_{n}^{sreg})_{c,i}$ and consider the $A$-orbit through $x$, $A\cdot x$.  By Proposition \ref{prop:constr}, $A\cdot x$ is irreducible, which implies
\begin{equation}\label{eq:contain} 
\overline{A\cdot x}\subseteq\overline{ (\fg_{n}^{sreg})_{c,i}}.
\end{equation}
 Let $k=\dim \overline{A\cdot x}$.  Then by (\ref{eq:contain}) $k\leq d$.  We now show $k=d$.  By Proposition \ref{prop:constr}, $A\cdot x$ is a constructible subset of $\fgn$, so there exists a subset $U\subset A\cdot x$ which is open in $\overline{A\cdot x}$.  Let $W$ be the set of smooth points of $U$.  $W$ is then open in $U$, and therefore $\dim W=k$.  Since $W$ is a smooth subvariety of $\fgn$, it is an analytic submanifold of $\fgn$.  $W$ is then an open submanifold of the $d$-dimensional manifold $A\cdot x$.  Hence, $k=d$.  We thus have equality in (\ref{eq:contain})
\begin{equation}\label{eq:equality}
\overline{A\cdot x}=\overline{ (\fg_{n}^{sreg})_{c,i}}.
\end{equation}
Now, we observe
\begin{equation}\label{eq:closinter}
\overline{ (\fg_{n}^{sreg})_{c,i}}\cap\fgn^{sreg}=(\fg_{n}^{sreg})_{c,i}.
\end{equation}
Since $A\cdot x$ is constructible, its Zariski closure is the same as its closure in the Euclidean topology on $\fgn$ (see [M, pg 60]).  Equations (\ref{eq:equality}) and (\ref{eq:closinter}) then imply that $(\fg_{n}^{sreg})_{c,i}$ is $A$-invariant, and therefore $A\cdot x\subset(\fg_{n}^{sreg})_{c,i}$.  Now, we suppose that $A\cdot x\neq (\fg_{n}^{sreg})_{c,i}$.  Let $y\in (\fg_{n}^{sreg})_{c,i}-A\cdot x$.  The same argument applied to $y$ implies that $\overline{A\cdot y}=\overline{(\fg_{n}^{sreg})_{c,i}}$.  But $A\cdot y$ contains a Zariski open subset of $\overline{(\fg_{n}^{sreg})_{c,i}}$, and hence $A\cdot x\cap A\cdot y\neq \emptyset$, by the irreducibility of $\overline{(\fg_{n}^{sreg})_{c,i}}$.  We have obtained a contradiction and therefore $A\cdot x=(\fg_{n}^{sreg})_{c,i}$.  Repeating this argument for each irreducible component $(\fg_{n}^{sreg})_{c,i}$, $1\leq j\leq N(c)$, we obtain the desired result.

\end{proof}


\section{The Action of the group $A$ on Generic Matrices}\label{s:generics}

For $x\in\fg_{i}$ let $\sigma(x)$ denote the spectrum of $x$, where $x$ is viewed as an element of $\fg_{i}$.  We consider the following set of regular semisimple elements of $\fgn$.  
\begin{equation}
\fgno=\{x\in\fgn |\;x_{i}\text { is regular semisimple}, \, \sigma(x_{i-1})\cap \sigma (x_{i})=\emptyset, \, 2\leq i\leq n\}.
\end{equation}
In the case of $\fgl(n)$, $\fgno$ consists of matrices each of whose cutoffs are diagonalizable with distinct eigenvalues and no two adjacent cutoffs share any eigenvalues.  Here is an example of such a matrix.
\begin{exam}\label{ex:omega}
Consider the matrix in $\fgl(3)$ 
$$
X=\left[\begin{array}{ccc}
0 & 20 & 28\\
1& 1 &-14\\
0 & 1 &2\end{array}\right]_{\mbox{\large .}}
$$
 One can compute that $X$ has eigenvalues $\sigma(X)=\{-2, 2, 3\}$ so that $X$ is regular semisimple and that $\sigma(X_{2})=\{5,-4\}$.  Clearly $\sigma(X_{1})=\{0\}$.  Thus $X\in \fgl(3)_{\Omega}$.
\end{exam}

Let $d=\frac{1}{2}\dim\orbx,$ $\orbx$ a regular adjoint orbit in $\fgn$.  By (\ref{eq:sumranks}), $d=\sum_{i=1}^{n-1} r_{i}$.  Given $c\in\Co^{d+r_{n}}$, we write $c=(c_{1}, \cdots, c_{i},\cdots, c_{n})\in\Co^{r_{1}}\times\cdots\times \Co^{r_{i}}\times\cdots\times \Co^{r_{n}}$ with $c_{i}\in\Co^{r_{i}}$.  We identify $\Co^{r_{i}}\simeq \fh_{i}/W_{i}$ using the map in (\ref{eq:worb}).  We define a subset $\Omega_{n}\subset\Co^{d+r_{n}}$ as the set of $c$ such that $c_{i}$ and $c_{i+1}$ are regular orbits whose elements share no eigenvalues in common.  $\Omega_{n}$ is Zariski open in $\fgn$ by Remark 2.16 in [KW1].  With this definition, it is easy to note that $\fgno=\bigcup_{c\in\Omega_{n}} \fibre$.  This follows from the fact that if $x\in\fg_{i}$ takes the same value on the fundamental $\Ad$-invariants as a regular semisimple element, then it is conjugate to that element.  We are now ready to state the theorem concerning the orbit structure of the group $A$ on $\fgno$.  In the case of $\fgl(n)$ this theorem is due to Kostant and Wallach (see [KW1] Theorems 3.23, 3.28), and in the case of $\fso(n)$, it is due to [Col].



\begin{thm} \label{thm:generics}
The elements of $\fgno$ are strongly regular and therefore $\fg_{n}^{sreg}$ is non-empty.  If $c\in\Omega_{n}$, then $(\mathfrak{g}_{n})_{c}=\sfibre$ is precisely one $A$-orbit.  Moreover, $(\mathfrak{g}_{n})_{c}$ is a homogeneous space for a free, algberaic action of the torus $(\mathbb{C}^{\times})^{d}$. 
\end{thm}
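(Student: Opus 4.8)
The plan for the first assertion is to apply Proposition~\ref{prop:sreg}: for $x\in\fgno$ one checks that each cutoff $x_i$ is regular (immediate, since $x_i$ is regular semisimple) and that $\fz_{\fg_{i}}(x_{i})\cap\fz_{\fg_{i+1}}(x_{i+1})=0$. For the second condition, suppose $0\neq z$ lies in this intersection. Because $x_i$ is regular semisimple, $\fz_{\fg_{i}}(x_{i})$ is a Cartan subalgebra, so $z$ is semisimple with a non-zero eigenvalue $\mu$ on $\C^{i+1}$; its $\mu$-eigenspace $W$ lies in $\C^{i}$, since $z$, viewed in $\fg_{i+1}$, kills the $(i+1)$-st coordinate direction. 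As $z$ commutes with both $x_i$ and $x_{i+1}$, the space $W$ is invariant under each; and since $x_{i+1}$ carries $W\subseteq\C^{i}$ into itself, it agrees there with $x_i$. Hence every eigenvalue of $x_i|_W$ lies in $\sigma(x_i)\cap\sigma(x_{i+1})$, contradicting the definition of $\fgno$; so $z=0$, and $x$ is strongly regular. Since $\fgno$ is non-empty Zariski open in $\fgn$ (it contains the matrix of Example~\ref{ex:omega} when $n=3$, and non-emptiness in general also follows from the fibre computation below), $\sreg\neq\emptyset$. The identical argument applied to a non-identity element shows $Z_{G_i}(x_i)\cap Z_{G_{i+1}}(x_{i+1})=\{e\}$ for $x\in\fgno$, a fact I will use below.

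\textbf{Reduction to irreducibility, and an inductive fibration.} Fix $c=(c_1,\dots,c_n)\in\Omega_n$ and put $c'=(c_1,\dots,c_{n-1})\in\Omega_{n-1}$. For $x\in\fibre$, each $x_i$ takes the invariant values of the regular semisimple orbit $c_i$, hence (as noted before the theorem) is $G_i$-conjugate to it and in particular regular semisimple, with consecutive spectra disjoint; so $\fibre\subseteq\fgno\subseteq\sreg$ and $\fibre=\sfibre$. By Theorem~\ref{thm:cpts}, $\fibre$ is then a smooth variety of pure dimension $d$ whose finitely many irreducible components coincide with its connected components and with the $A$-orbits it contains, so everything reduces to proving $\fibre$ irreducible. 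I would do this by induction on $n$ via the morphism $p\colon\fibre\to(\fg_{n-1})_{c'}$, $x\mapsto x_{n-1}$ (well defined since $f_{i,j}(x_{n-1})=f_{i,j}(x_i)=c_{i,j}$ for $i\le n-1$), the base case being a single point. The crux is to identify $p^{-1}(y)$ for $y\in(\fg_{n-1})_{c'}$: a matrix in $\fgn$ with prescribed $(n-1)$-cutoff $y$ is determined by one further block of entries (last row, last column and corner entry in the $\fgl$ case; only the last column when the matrices are skew-symmetric), and imposing $f_{n,j}(x)=c_{n,j}$ cuts the resulting affine space down. Working in an eigenbasis of the regular semisimple matrix $y$, and using that $\sigma(y)$ is disjoint from the spectrum prescribed by $c_n$, one shows $p^{-1}(y)$ is non-empty and isomorphic to the torus $(\Co^{\times})^{r_{n-1}}$. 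As all fibres of $p$ are irreducible of the same dimension $r_{n-1}$ and $(\fg_{n-1})_{c'}$ is irreducible by induction, $\fibre$ is irreducible, hence a single $A$-orbit, of dimension $(d-r_{n-1})+r_{n-1}=d$.

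\textbf{The free torus action.} Since $\fibre$ is a single $A$-orbit, fix $x\in\fibre$; by Proposition~\ref{prop:constr} we have $\fibre=A\cdot x=\{\text{image of }\psi\}$, where $\psi\colon T_x:=Z_{G_1}(x_1)\times\cdots\times Z_{G_{n-1}}(x_{n-1})\to\fgn$, $\psi(g_1,\dots,g_{n-1})=\Ad(g_1\cdots g_{n-1})\cdot x$. By Kostant's theorem each $Z_{G_i}(x_i)$ is a connected abelian algebraic group --- a torus, of dimension $r_i$ as $x_i$ is regular --- so $T_x$ is a torus of dimension $\sum_{i=1}^{n-1}r_i=d$. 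Next I would show $\psi$ is injective by descending through the cutoffs: comparing $(n-1)$-cutoffs of $\psi(g)=\psi(g')$ (using $g_1\cdots g_{n-1}\in G_{n-1}$) together with $\psi(g)=\psi(g')$ itself produces an element of $Z_{G_{n-1}}(x_{n-1})\cap Z_{G_{n}}(x)=\{e\}$, forcing $g_1\cdots g_{n-1}=g_1'\cdots g_{n-1}'$; descending one cutoff at a time, at each stage the commutativity of the tori $Z_{G_i}(x_i)$ and the vanishing $Z_{G_i}(x_i)\cap Z_{G_{i+1}}(x_{i+1})=\{e\}$ give $g_{i+1}=g_{i+1}'$, until $g=g'$. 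Thus $\psi\colon T_x\to\fibre$ is a bijective morphism; as $\fibre$ is smooth (hence normal) and we work in characteristic zero, $\psi$ is an isomorphism of varieties by Zariski's main theorem. Transporting the translation action of $T_x\cong(\Co^{\times})^d$ through $\psi$ exhibits $\fibre$ as a homogeneous space for a free, algebraic action of $(\Co^{\times})^d$, completing the proof.

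\textbf{Main obstacle.} The step I expect to be hardest --- and where the orthogonal case genuinely departs from the $\fgl(n)$ case of [KW1] --- is the fibre identification $p^{-1}(y)\cong(\Co^{\times})^{r_{n-1}}$, especially its non-emptiness. For $\fgl(n)$ this is the classical bordered-matrix calculation: in an eigenbasis with $\sigma(y)=\{\mu_k\}$, the conditions $f_{n,j}(x)=c_{n,j}$ pin the corner entry to a fixed value and each product $x_{k,n}x_{n,k}$ to $-\prod_j(\mu_k-\nu_j)\big/\prod_{l\neq k}(\mu_k-\mu_l)$ (with $\{\nu_j\}$ the target spectrum), which is non-zero precisely because $\sigma(y)$ and $\{\nu_j\}$ are disjoint, so $p^{-1}(y)$ is a product of $r_{n-1}=n-1$ copies of $\Co^{\times}$. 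For $\fso(n)$ the Pfaffian is among the invariants and the eigenvalues of $y$ occur in $\pm$ pairs (with an extra zero eigenvalue when $n-1$ is odd), which makes the analogous computation considerably more delicate; carrying it out --- and in particular establishing the corresponding non-vanishing, hence that $p^{-1}(y)$ is a non-empty torus of dimension $r_{n-1}$ --- is the real work, done in Section~\ref{s:orthogen} with the cases $\fso(2l)$ and $\fso(2l+1)$ treated separately.
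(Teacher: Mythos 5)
Your argument is correct in outline and reaches the same endpoint, but it is organized quite differently from the paper's. The paper (in \S\ref{s:orthogen}) builds an explicit algebraic isomorphism $\Gamma_n^c : SO(2)^d \to (\fso(n))_c$ out of the solution varieties of \S\ref{s:dlsol}--\S\ref{s:blsol}, deduces that $(\fso(n))_c$ is smooth and irreducible of dimension $d$ from Theorem~\ref{thm:ofibre}, establishes strong regularity by computing $T_x((\fso(n))_c)=V_x$ and invoking (\ref{eq:sreg}), and then applies Theorem~\ref{thm:cpts}; the free torus action is the translation action transported through $\Gamma_n^c$. You instead: (i) prove strong regularity of $\fgno$ directly from the centralizer criterion of Proposition~\ref{prop:sreg} by a clean eigenspace argument (a genuinely more self-contained route than the paper's, which gets it as a by-product of the tangent-space computation); (ii) obtain irreducibility of $\fibre$ by induction via the cutoff fibration $p:\fibre\to(\fg_{n-1})_{c'}$ with fibres isomorphic to $(\Co^\times)^{r_{n-1}}$, rather than via $\Gamma_n^c$; and (iii) get the torus action from bijectivity of the orbit map $\psi$ of Proposition~\ref{prop:constr} plus Zariski's Main Theorem, rather than from $\Gamma_n^c$. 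Both approaches bottom out in the same hard computation, which you correctly identify: non-emptiness of $p^{-1}(y)$ is exactly non-emptiness of the solution varieties $\sol$, which for $\fso$ requires the case analysis and Pfaffian bookkeeping of Theorems~\ref{thm:bdextension} and~\ref{thm:extensionDB}, so your proposal does not actually bypass the bulk of \S\ref{s:orthogen}.

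Two places to tighten before this would stand on its own. First, ``irreducible base with irreducible equidimensional fibres implies irreducible total space'' is not true without further hypotheses; here you should use the pure $d$-dimensionality of $\fibre$ (from Theorem~\ref{thm:cpts}) to show every irreducible component of $\fibre$ dominates $(\fg_{n-1})_{c'}$, and then run the standard component-counting argument (Shafarevich, Ch.~I). Second, the ``identical argument'' giving $Z_{G_i}(x_i)\cap Z_{G_{i+1}}(x_{i+1})=\{e\}$ is only morally identical: at the group level one works with a semisimple element $g\neq e$ of the centralizer torus and an eigenvalue $\lambda\neq 1$ (rather than $\mu\neq 0$), using that $g$ fixes $e_{i+1}$ so the $\lambda$-eigenspace sits inside $\C^i$. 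Neither of these is a substantive gap, but both deserve to be written out.
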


\begin{rem}
In Remark \ref{r:sreg}, we noted that $\fgl(n)^{sreg}\neq\emptyset$.  This can be shown without use of Theorem \ref{thm:generics} (see Theorem 2.3. in [KW1]).  At this point, Theorem \ref{thm:generics} is our only way of producing strongly regular elements in $\fso(n)$.  
\end{rem}

We give a complete proof of this statement for $\fso(n)$ in section \ref{s:orthogen}.  We sketch the proof for $\fgl(n)$ in the next section, concentrating on the case $n=3$ for the illustration of the main ideas.  We conclude this section with some corollaries of Theorem \ref{thm:generics}. The fact that $\fgn^{sreg}$ is non-empty implies the following. 

\begin{cor}\label{c:algind}
Let $f_{i,j}\in P(\fg_{i})^{G_{i}}$ for $1\leq j\leq r_{i}$ generate the ring $P(\fg_{i})^{G_{i}}$.  Then the functions $\{f_{i,j}| 1\leq i\leq n, \, 1\leq j\leq r_{i}\}$ are algebraically independent over $\Co$. 
\end{cor}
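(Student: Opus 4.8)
I want to deduce algebraic independence of the collection $\{f_{i,j}\mid 1\le i\le n,\ 1\le j\le r_i\}$ from the fact, supplied by Theorem~\ref{thm:generics}, that $\fgn^{sreg}\neq\emptyset$. By Theorem-Definition~\ref{d:sreg}, a point $x\in\fgn^{sreg}$ is exactly a point at which the differentials $\{(df_{i,j})_x\mid 1\le i\le n,\ 1\le j\le r_i\}$ are linearly independent in $T_x^*(\fgn)$. So the total number of these functions is at most $\dim\fgn$, and — more to the point — they have linearly independent differentials at at least one point of $\fgn$, hence on a Zariski-open dense subset of $\fgn$.

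\textbf{Key steps.} First I would recall the standard dictionary between algebraic independence and rank of the Jacobian: for polynomials $g_1,\dots,g_m\in P(\fgn)$ on the irreducible affine variety $\fgn\simeq\Co^{\dim\fgn}$, the $g_k$ are algebraically independent over $\Co$ if and only if the differentials $(dg_k)_x$ are linearly independent for $x$ in some (equivalently, a dense open) subset of $\fgn$. This is exactly the notion of ``independent functions'' already used in the paper (the parenthetical remark in \S\ref{s:LP} makes this equivalence explicit for polynomials on a smooth affine variety). Second, I would invoke Theorem~\ref{thm:generics} to produce $x\in\fgn^{sreg}$ — concretely, any $x\in\fgno$, which is nonempty (e.g. Example~\ref{ex:omega} in the $\fgl(3)$ case, and the general construction for $\fso(n)$ in \S\ref{s:orthogen}). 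Third, by Theorem-Definition~\ref{d:sreg} the differentials $\{(df_{i,j})_x\}$ are linearly independent at that point $x$, which by the first step forces the $f_{i,j}$ to be algebraically independent over $\Co$.

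\textbf{Main obstacle.} There is essentially no obstacle here: the corollary is a soft consequence once Theorem~\ref{thm:generics} is in hand. The only point requiring a word of care is making sure the equivalence ``linear independence of differentials at one point $\Rightarrow$ algebraic independence'' is applied correctly — it needs $\fgn$ irreducible (true, since it is an affine space) and it is the easy direction of the Jacobian criterion (a nontrivial polynomial relation $P(g_1,\dots,g_m)=0$ would, upon differentiating, give a linear dependence among the $(dg_k)_x$ at every $x$, contradicting independence at the chosen point). So the write-up is a one-paragraph argument: cite Theorem~\ref{thm:generics} for a strongly regular element, cite Theorem-Definition~\ref{d:sreg} for the independence of differentials there, and conclude via the Jacobian criterion.
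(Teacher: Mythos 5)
Your proposal is correct and matches the paper's reasoning: the paper states Corollary~\ref{c:algind} as an immediate consequence of $\fgn^{sreg}\neq\emptyset$ (Theorem~\ref{thm:generics}), which by Theorem-Definition~\ref{d:sreg} gives a point where the differentials $(df_{i,j})_x$ are linearly independent, and algebraic independence then follows from the standard Jacobian criterion. You have simply spelled out the one-line argument the paper leaves implicit.
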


\begin{cor}
The classical analogue of the Gelfand-Zeitlin algebra $J(\fgn)\subset P(\fgn)$ defined in equation (\ref{eq:invaralg}) is isomorphic as an associative algebra to the Gelfand-Zeitlin subalgebra $GZ(\fgn)$ of $U(\fgn)$.
\end{cor}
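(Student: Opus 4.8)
The plan is to show that $GZ(\fgn)$ and $J(\fgn)$ are both free commutative $\Co$-algebras on $N:=\sum_{i=1}^{n}r_i$ generators (with $r_i=\mathrm{rank}\,\fg_i$), whence they are isomorphic as associative algebras. For $J(\fgn)$ this is immediate from Corollary \ref{c:algind}: by the very definition (\ref{eq:invaralg}), $J(\fgn)$ is generated inside $P(\fgn)$ by the $N$ elements $\{f_{i,j}\mid 1\le i\le n,\ 1\le j\le r_i\}$, and Corollary \ref{c:algind} asserts precisely that these are algebraically independent over $\Co$; hence the evaluation map $\Co[t_{i,j}]\to J(\fgn)$, $t_{i,j}\mapsto f_{i,j}$, from the polynomial ring on $N$ indeterminates is an isomorphism.

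For $GZ(\fgn)$ I would argue through the filtration calculus in $U(\fgn)$. Each $\fg_i$ is reductive, the principal symbol of a central element is $G_i$-invariant, and by Theorem 10.4.5 in [Dix] (which gives $Z(\fg_i)\cong S(\fg_i)^{G_i}$ as filtered algebras, whose associated graded is the symbol map) the symbol map $Z(\fg_i)\to S(\fg_i)^{G_i}$ is surjective. Using the trace-form identification $S(\fg_i)^{G_i}\cong P(\fg_i)^{G_i}$, we may therefore pick generators $z_{i,1},\dots,z_{i,r_i}$ of $Z(\fg_i)$ whose principal symbols are exactly the chosen invariants $f_{i,j}$. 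The inclusions $U(\fg_i)\hookrightarrow U(\fgn)$ are filtered, so these symbols, computed in $\mathrm{gr}\,U(\fgn)=S(\fgn)\cong P(\fgn)$, are again the $f_{i,j}$. Since $S(\fgn)$ is a domain, principal symbols are multiplicative; combining this with the algebraic independence of $\{f_{i,j}\}$ (Corollary \ref{c:algind}) and the standard fact that a family in a filtered algebra with algebraically independent symbols is itself algebraically independent, we conclude that the $z_{i,j}$ are algebraically independent in $U(\fgn)$. As $GZ(\fgn)=Z(\fg_1)\cdots Z(\fg_n)$ is the subalgebra generated by all the $z_{i,j}$, it follows that $GZ(\fgn)=\Co[z_{i,j}]$ is a polynomial ring on $N$ generators. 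Composing the two presentations through $\Co[t_{i,j}]$ yields the asserted isomorphism $J(\fgn)\cong GZ(\fgn)$ carrying $f_{i,j}$ to $z_{i,j}$; equivalently, the symbol calculus identifies $\mathrm{gr}\,GZ(\fgn)$ with $J(\fgn)$, and $GZ(\fgn)$ being itself polynomial lifts this to an algebra isomorphism.

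Essentially all of the content is imported: Corollary \ref{c:algind}, whose proof occupies Section \ref{s:generics}, is the only nontrivial ingredient, the rest being a formal comparison of presentations. The one point requiring care is the orthogonal case: one cannot simply invoke the Drozd--Futorny--Ovsienko polynomiality result [DFO] recorded in Remark \ref{r:GZ}, which is stated for $\fgl(n)$. For $\fgn=\fgl(n)$ the shortcut is available — $GZ(\fgl(n))$ is polynomial in $\binom{n+1}{2}=\sum_{i=1}^{n}i=N$ generators by [DFO], and $J(\fgl(n))$ is polynomial in $N$ generators by Corollary \ref{c:algind}, so the two agree — but for $\fgn=\fso(n)$ the polynomiality of $GZ(\fso(n))$ must be obtained from the symbol argument above (or from the corresponding classical statement for $\fso$ in the literature) rather than quoted from [DFO]. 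Modulo that, the isomorphism of associative algebras is clear, and it is of course non-canonical, depending on the chosen generating sets $\{f_{i,j}\}$ and $\{z_{i,j}\}$.
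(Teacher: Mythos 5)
Your proof is correct. For $\fgl(n)$ it reproduces the paper's one-line argument exactly: $J(\fgl(n))$ is polynomial on $\binom{n+1}{2}$ generators by Corollary \ref{c:algind}, $GZ(\fgl(n))$ is polynomial on $\binom{n+1}{2}$ generators by [DFO] as recorded in Remark \ref{r:GZ}, and two polynomial rings of the same Krull dimension are isomorphic. Where you genuinely diverge from the paper --- and improve on it --- is in the orthogonal case. The paper's proof is literally ``follows from Remark \ref{r:GZ} and Corollary \ref{c:algind},'' but Remark \ref{r:GZ} gives the generator count $\binom{n+1}{2}$ and the citation [DFO], both of which are specific to $\fgl(n)$; for $\fso(n)$ the relevant count is $\sum_{i=1}^n r_i$, and no reference is supplied for the polynomiality of $GZ(\fso(n))$. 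Your symbol-calculus argument closes this gap cleanly and uniformly: lift the chosen invariants $f_{i,j}\in S(\fg_i)^{G_i}$ to central elements $z_{i,j}\in Z(\fg_i)$ having those principal symbols, note that a family in a PBW-filtered algebra with algebraically independent symbols in $\mathrm{gr}\,U(\fg_n)\cong S(\fg_n)$ is itself algebraically independent (assign each variable the weight of its symbol and look at the top weighted-homogeneous piece), invoke Corollary \ref{c:algind} for the independence of the $f_{i,j}$, and conclude $GZ(\fg_n)=\Co[z_{i,j}]$. This is more self-contained than the citation to [DFO], works in both types, and makes the dependence on Corollary \ref{c:algind} --- the genuinely hard input --- completely explicit. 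As you note, the resulting isomorphism $f_{i,j}\mapsto z_{i,j}$ is non-canonical, depending on the chosen generating sets, which matches the paper's intent.
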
 
\begin{proof}
The corollary follows from Remark \ref{r:GZ} and Corollary \ref{c:algind}. 

\end{proof}


\begin{cor}
Let $x\in\fgno$ and let $\orbx$ be its adjoint orbit.   Let $\orbx^{sreg}=\orbx\cap\fgn^{sreg}$.  The $A$-orbits in $\orbx^{sreg}$ are Lagrangian submanifolds of $\orbx$ and form the leaves of a polarization of $\orbx^{sreg}$.  
\end{cor}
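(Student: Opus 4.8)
The plan is to derive this as a formal consequence of Theorem \ref{thm:generics} and Proposition \ref{prop:lag}. First I would note that, by the first assertion of Theorem \ref{thm:generics}, every element of $\fgno$ is strongly regular; in particular $x\in\orbx\cap\fgn^{sreg}=\orbx^{sreg}$, so $\orbx^{sreg}\neq\emptyset$. This non-emptiness is the only hypothesis needed to invoke Proposition \ref{prop:lag}.

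Applying Proposition \ref{prop:lag} to $\orbx$ then gives at once that $x$ is regular, that $\orbx^{sreg}$ is Zariski open in $\orbx$ and hence a symplectic submanifold for the restriction of the KKS form, and that $\orbx^{sreg}$ is a union of $A$-orbits each of dimension $d=\frac{1}{2}\dim\orbx$. To see these orbits are Lagrangian in $\orbx$ (equivalently, in the open subset $\orbx^{sreg}$), recall from Theorem \ref{thm:GZgroup} that the $A$-action preserves adjoint orbits, so the $A$-orbit through $y\in\orbx^{sreg}$ lies in $\orbx^{sreg}$; its tangent space at $y$ is the distribution $V_{y}$ of (\ref{eq:distr}), which is isotropic by the Poisson commutativity of the $f_{i,j}$ (as in the proof of Theorem-Definition \ref{d:sreg}) and, by strong regularity of $y$ together with (\ref{eq:sreg}), has dimension $d$, hence is Lagrangian. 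Since the $A$-orbits are the integral manifolds of the constant-rank integrable distribution $y\mapsto V_{y}$ on $\orbx^{sreg}$, they are precisely the leaves of a polarization, which completes the argument.

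There is no real obstacle here: the corollary is exactly the conclusion of Proposition \ref{prop:lag}, which was stated earlier but only becomes non-vacuous in the orthogonal case once Theorem \ref{thm:generics} supplies a non-empty $\orbx^{sreg}$. The one thing to verify is that passing from $x\in\fgno$ to its ambient orbit $\orbx$ keeps us inside $\fgn^{sreg}$, and that is immediate from the first sentence of Theorem \ref{thm:generics}. It is worth contrasting this with the $\fgl(n)$ situation (Remark \ref{r:sreg}), where every regular adjoint orbit is already known to be polarized on a dense open subset, whereas here we obtain the polarization only for orbits meeting $\fgno$.
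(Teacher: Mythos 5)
Your proposal is correct and takes essentially the same route as the paper: the paper simply says the corollary follows directly from Proposition \ref{prop:lag}, with the non-emptiness of $\orbx^{sreg}$ supplied (as you note) by the first assertion of Theorem \ref{thm:generics}. Your unpacking of the isotropy and dimension count is just a restatement of the content already established in Proposition \ref{prop:lag} and Theorem-Definition \ref{d:sreg}, so there is no substantive difference.
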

\begin{proof}
The corollary follows directly from Proposition \ref{prop:lag}.
\end{proof}

We now obtain the complete integrability of the Gelfand-Zeitlin system on certain regular adjoints orbits in $\fgn$.
\begin{cor}\label{c:compint}
Let $x\in\fgno$ and let $\orbx$ be its adjoint orbit.  Let $q_{i,j}=f_{i,j}|_{\orbx}$, $1\leq i\leq n-1$, $1\leq j\leq r_{i}$.  The functions $\{q_{i,j} |  1\leq i\leq n-1,\, 1\leq j\leq r_{i}\}$ form a completely integrable system on $\orbx$.  
\end{cor}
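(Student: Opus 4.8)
The plan is to verify directly the three defining properties of a completely integrable system recalled in Section~\ref{s:LP}: that the family contains exactly $d=\frac{1}{2}\dim\orbx$ functions, that they pairwise Poisson commute on $\orbx$, and that their differentials are linearly independent on a dense open subset of $\orbx$. The first two points follow immediately from material already in hand, and the third is where Theorem~\ref{thm:generics} enters; there is no genuine obstacle here, only the matter of assembling the right facts.

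For the count, by~(\ref{eq:sumranks}) (equivalently~(\ref{eq:countd})) we have $\sum_{i=1}^{n-1}r_i=d$, and since $\orbx$ is a regular adjoint orbit, $\dim\orbx=\dim\fgn-r_n=2d$; hence $\{q_{i,j}\mid 1\le i\le n-1,\ 1\le j\le r_i\}$ has exactly $d=\frac{1}{2}\dim\orbx$ members. For Poisson commutativity, the polynomials $f_{i,j}$ all lie in the classical Gelfand--Zeitlin algebra $J(\fgn)$, which is Poisson commutative (Section~\ref{s:GZa}), so $\{f_{i,j},f_{k,l}\}=0$ in $P(\fgn)$. Because $\orbx$ is a symplectic leaf of the Lie--Poisson structure on $\fgn$, the restriction map $\mathcal{H}(\fgn)\to\mathcal{H}(\orbx)$ is a homomorphism of Poisson algebras, so $\{q_{i,j},q_{k,l}\}_{\orbx}=\{f_{i,j},f_{k,l}\}|_{\orbx}=0$.

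It remains to establish independence, i.e.\ that the differentials $(dq_{i,j})$ are linearly independent on an open dense subset of $\orbx$. Since $\fgno$ is Zariski open in $\fgn$, the set $\orbx\cap\fgno$ is Zariski open in $\orbx$; it is nonempty because it contains $x$, and $\orbx$ is irreducible, so $\orbx\cap\fgno$ is dense (in both the Zariski and the Euclidean topology). By Theorem~\ref{thm:generics}, every $y\in\orbx\cap\fgno$ is strongly regular. In the proof of Theorem-Definition~\ref{d:sreg} it is shown that for strongly regular $y$ the Hamiltonian tangent vectors $\{(\xifij)_y\mid 1\le i\le n-1,\ 1\le j\le r_i\}\subset T_y(\orbx)$ are linearly independent, and --- since $\orbx$ carries the KKS symplectic form --- that this is equivalent to the linear independence of the differentials $\{(dq_{i,j})_y\}$. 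Thus the $(dq_{i,j})$ are independent on the dense open set $\orbx\cap\fgno$, and the three properties together show that $\{q_{i,j}\}$ is a completely integrable system on $\orbx$. The only mild subtlety worth flagging is the passage between independence of the differentials and independence of the associated Hamiltonian vector fields, but this is precisely the symplectic-duality argument already carried out in Section~\ref{s:sreg}.
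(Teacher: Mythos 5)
Your proof is correct and follows essentially the same route as the paper: the paper's one-line proof simply cites Theorem-Definition~\ref{d:sreg} and its proof, which is exactly the ingredient you invoke for independence on the dense open set $\orbx\cap\fgno$, with strong regularity there supplied by Theorem~\ref{thm:generics}. You have merely spelled out the count, the Poisson commutativity on the leaf, and the density argument that the paper leaves implicit.
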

\begin{proof}
The corollary follows from the definition of strong regularity in Theorem-Definition \ref{d:sreg} and its proof.  
\end{proof}

\begin{rem}
If $\fgn=\fgl(n)$, then Remark \ref{r:sreg} states that $\orbx^{sreg}$ is non-empty for any regular $x\in\fgl(n)$.  Thus, the Gelfand-Zeitlin system in completely integrable on any regular adjoint orbit in $\fgl(n)$.  
\end{rem}


\subsection{ The generic general linear case }\label{s:glngenerics}

Let us first consider Theorem \ref{thm:generics} in the case of $\fgl(3)$.  The idea behind the proof is to reparameterize the action of $A$ given by the composition of the flows in (\ref{eq:flows}) by a simpler action of $(\Co^{\times})^{3}$ which allows us to count $A$-orbits in the fibre $\fgl(3)_{c}^{sreg}$.  Equation (\ref{eq:flows}) implies the $A$-orbit of $x\in\fgl(3)$ is
\begin{equation}\label{eq:aact}
\Ad\left(\left[\begin{array}{ccc}
z_{1} & & \\
 & 1 & \\
 & & 1\end{array}\right] \left[\begin{array}{ccc}
z_{2} & & \\
 & z_{2} & \\ 
 & & 1\end{array}\right]\left[\begin{array}{ccc}
\multicolumn{2}{c}{\exp(tx_{2})} &  \\
 &  & \\ 
 & & 1 \end{array}\right]\right)\cdot x,
 \end{equation}
 where $z_{1},\, z_{2}\in\Co^{\times}$ and $t\in\Co$.  The difficulty with analyzing this action is that even for $x\in\fgl(3)_{\Omega}$, $\exp (tx_{2})$ can be complicated (see Example \ref{ex:omega}).  (For larger values of $n$ the flows of the vector fields $\xi_{f_{i,j}}$ for $j>1$ are much more complicated to handle (see (\ref{eq:flows})).)  If we let $Z_{GL(i)}(x_{i})\subset GL(i)$ be the centralizer of $x_{i}$ in $GL(i)$, we observe from (\ref{eq:aact}) that the action of $A$ appears to push down to an action of $Z_{GL(1)}(x_{1})\times Z_{GL(2)}(x_{2})$.  Since $x\in\fgl(3)_{\Omega}$, we should expect the orbits of $A$ to be given by orbits of an action of $(\Co^{\times})^{3}$. 
 
The construction of the action of $(\Co^{\times})^{3}$ begins by considering the following elementary question in linear algebra, which we state in a more general setting.   Suppose that we are given an $(i+1)\times (i+1)$ matrix of the following form
\begin{equation}\label{eq:easymatrix}
\left[\begin{array}{cc}
\begin{array}{cccc}
\mu_{1}& 0 & \cdots & 0\\
0& \mu_{2}&\ddots &\vdots\\
\vdots&\;& \ddots & 0\\
0& \cdots &\cdots &\mu_{i}\\
\end{array} & \begin{array}{c}
y_{1}\\
\vdots\\
\vdots\\
y_{i}\end{array}\\
\begin{array}{cccc}
z_{1}&\cdots&\cdots&z_{i}
\end{array} & w
\end{array}\right]
\end{equation}
with $\mu_{j}\neq\mu_{k}$.  We want to determine the values of the $z_{i},\, y_{i},\,\text{and } w$ that force the matrix in (\ref{eq:easymatrix}) to have characteristic polynomial $f(t)=\prod_{j=1}^{i+1} (\lambda_{j}-t)$ with $\lambda_{k}\neq\lambda_{j}$ and $\lambda_{j}\neq\mu_{k}$.  These values can be found by equating the characteristic polynomial of the matrix in (\ref{eq:easymatrix}) evaluated at $\mu_{j}$ to $f(\mu_{j})$ for $1\leq j\leq i$ and solving the resulting system of equations.  Performing this calculation, we find the matrix in (\ref{eq:easymatrix}) has characteristic polynomial $f(t)$ if and only if it is of the form 
\begin{equation}\label{eq:solmatrix}
\left[\begin{array}{cc}
\begin{array}{cccc}
\mu_{1}& 0 & \cdots & 0\\
0& \mu_{2}&\ddots &\vdots\\
\vdots&\;& \ddots & 0\\
0& \cdots &\cdots &\mu_{i}\\
\end{array} & \begin{array}{c}
-z_{1}^{-1}\zeta_{1}\\
\vdots\\
\vdots\\
-z_{i}^{-1}\zeta_{i}\end{array}\\
\begin{array}{cccc}
z_{1}&\cdots&\cdots&z_{i}
\end{array} & w
\end{array}\right]_{\mbox{\large ,}}
\end{equation}
where $z_{j}\in\mathbb{C}^{\times}$, $\zeta_{j}\neq 0$ and depends only on the eigenvalues $\mu_{l}$ and $\lambda_{k}$ for $1\leq l\leq i$, $1\leq k\leq i+1$, and $w=\sum_{j=1}^{i+1}\lambda_{j}-\sum_{k=1}^{i}\mu_{i}$.  To avoid ambiguity, it is necessary to fix an ordering of the eigenvalues of the $i\times i$ cutoff of the matrix in (\ref{eq:solmatrix}).  To do this, we introduce a lexicographical ordering on $\C$ defined as follows. Let $z_{1},\, z_{2}\in \C$.  We say that $z_{1}>z_{2}$ if and only if $Re z_{1}> Re z_{2}$ or if $Re z_{1}=Re z_{2}$ then $Im z_{1}>Im z_{2}$.  
\begin{dfn}\label{d:sol}
Let $c\in\Omega_{n}$, $c=(c_{1},\cdots, c_{i}, c_{i+1}, \cdots, c_{n})$ with $c_{i}\in \Co^{r_{i}}=\Co^{i}$.  Suppose that the regular semisimple orbit represented by $c_{i}$ consists of matrices with characteristic polynomial $\prod_{j=1}^{i} (\mu_{j}-t)$ and suppose that $\mu_{1}>\mu_{2}>\cdots >\mu_{i}$ in the lexicographical order on $\Co$.  We define $\sol$ as the elements of the form (\ref{eq:solmatrix}).  We refer to $\sol$ as the (generic) solution variety at level $i$.  
\end{dfn}

\begin{rem}
By (\ref{eq:solmatrix}) $\sol$ is isomorphic to $(\Co^{\times})^{i}$ as an algebraic variety.  We will identify $\sol$ with $(\Co^{\times})^{i}$ for the remainder of this section.   
\end{rem}



We now return to the case $n=3$. Let $x\in\fgl(3)_{c}$ with $c\in\Omega_{3}$.  Suppose that $c\in\Omega_{3}$ is such that $\sigma(x_{1})=\{a\}$, and $\sigma(x_{2})=\{\mu_{1},\mu_{2}\}$ with $\mu_{1}>\mu_{2}$ in lexicographical order.  Then $x_{2}\in\Xi^{1}_{c_{1},c_{2}}$, so that
\begin{equation}\label{eq:littlematrix}
x_{2}=\left[\begin{array}{cc} a & -z_{1}^{-1}\zeta_{1}\\
z_{1} & z \end{array}\right]_{\mbox{\large ,}}
\end{equation}
with $z_{1}\in\Co^{\times}$ and $\zeta_{1}\neq 0$ and independent of $z_{1}$.  There exists a morphism $\Co^{\times}\to GL(2)$, $z_{1}\to \gamma_{1}(z_{1})$ such that 
 \begin{equation}\label{eq:diag}
 \Ad(\gamma(z_{1})) x=\left[\begin{array}{ccc} \mu_{1}& 0 & -z_{2}^{-1}\zeta_{2}\\
0 & \mu_{2} &-z_{3}^{-1}\zeta_{3}\\
z_{2}& z_{3} & w\end{array}\right ]
\end{equation}
is in $\Xi^{2}_{c_{2}, c_{3}}$.  (This can be checked by explicit computation.)
%

Using equation (\ref{eq:diag}), we can define an isomorphism of affine varieties $\Gamma_{3}^{c}: (\Co^{\times})^{3}\to \fgl(3)_{c}$,
  $$
 \Gamma_{3}^{c} (z_{1},z_{2},z_{3})=  \Ad(\gamma(z_{1})^{-1})  \left[\begin{array}{ccc} \mu_{1}& 0 & -z_{2}^{-1}\zeta_{2}\\
0 & \mu_{2} &-z_{3}^{-1}\zeta_{3}\\
z_{2}& z_{3} & w\end{array}\right ]_{\mbox{\large .}}
$$
 The map $\Gamma_{3}^{c}$ starts with an element of the solution variety at level $2$ and then conjugates the $2\times 2$ cutoff of this element into $z_{1}\in\Xi^{1}_{c_{1}, c_{2}}$.  
Using the isomorphism, $\Gamma_{3}^{c}$, we can define a free algebraic action of $(\Co^{\times})^{3}$ on $\fgl(3)_{c}$.  One can write down this action explicitly as follows.  Suppose that $\Gamma_{3}^{c}(z_{1}, z_{2}, z_{3})=x$, then for $(z_{1}^{\prime}, z_{2}^{\prime}, z_{3}^{\prime})\in (\Co^{\times})^{3}$  
\begin{equation}\label{eq:cact}
(z_{1}^{\prime}, z_{2}^{\prime}, z_{3}^{\prime}) \cdot x= \Ad\left(\left[\begin{array}{ccc}
z_{1}^{\prime} & & \\
 & 1 & \\
 & & 1\end{array}\right]\gamma(z_{1})^{-1} \left[\begin{array}{ccc}
z_{2}^{\prime} & & \\
 & z_{3}^{\prime} & \\
 & & 1\end{array}\right] \gamma (z_{1})\right)\cdot x.
\end{equation}

Note that this action is conjugation by the centralizers of the $i\times i$ cutoffs of $x$ starting with the $2\times 2 $ cutoff.   The difference with (\ref{eq:aact}) is that we now diagonalize the $2\times 2$ cutoff before performing the conjugation, which makes the action much easier to understand.  Thus, it is reasonable to believe that the orbits of the action in (\ref{eq:aact}) and the action in (\ref{eq:cact}) coincide.  To show this precisely, one must first show that $\fgl(3)_{c}=\fgl(3)^{sreg}_{c}$.  Then the fact that $\fgl(3)_{c}$ is one $A$-orbit follows immediately from the irreducibility of $\fgl(3)_{c}$ and Theorem \ref{thm:cpts}.  One way of showing that $\fgl(3)_{c}=\fgl(3)^{sreg}_{c}$ is to show that the tangent space $T_{x}(\fgl(3)_{c})=V_{x}$, with $V_{x}$ as in (\ref{eq:easydist}).  Since $\dim(T_{x}(\fgl(3)_{c}))=3$, $\fgl(3)_{c}=\fgl(3)^{sreg}_{c}$ from (\ref{eq:sreg}).  We will compute the tangent space $T_{x}((\fgn)_{c})$, $c\in\Omega_{n}$ for $\fgn=\fso(n)$ in subsection \ref{s:proof}.  The computation in the case of $\fgn=\fgl(n)$ is analogous.  (In the case of $\fgl(n)$, one can also obtain $\fgl(3)_{c}=\fgl(3)^{sreg}_{c}$ by appealing directly to Theorem 2.17 in [KW1].)  


  
The general case proceeds similarly; we briefly summarize it here.  Let $(\mathbf{z_{1}}, \mathbf{z_{2}},\cdots, \mathbf{z_{n}})\in\Co^{\times}\times\cdots\times(\Co^{\times})^{i}\times\cdots\times(\Co^{\times})^{n-1}=(\Co^{\times})^{{n\choose 2}}$ with $\mathbf{z_{i}}=(z_{i,1},\cdots ,z_{i,i})\in(\Co^{\times})^{i}\simeq \sol$.  One can write down a matrix $\gamma_{i,i+1}(\mathbf{z_{i}})$ which diagonalizes $\mathbf{z_{i}}$ and depends regularly on $\mathbf{z_{i}}$.  Now, we can define a bijective morphism as in the case of $n=3$ by

\begin{equation}\label{eq:gammagen}
\Gamma_{n}^{c}(\mathbf{z_{1}},\mathbf{z_{2}},\cdots, \mathbf{z_{n}})=\Ad(\gamma_{1,2}(\mathbf{z_{1}})^{-1}\gamma_{2,3}(\mathbf{z_{2}})^{-1}
 \cdots \gamma_{n-2,n-1}(\mathbf{z_{n-2}})^{-1}) \cdot(\mathbf{z_{n-1}}).
\end{equation}
 One can show by explicit computation that $\Gamma_{n}^{c}:(\Co^{\times})^{{n\choose 2}}\to \glfibre$ is an isomorphism of affine varieties.

 
 \begin{rem}
 The proof of Theorem \ref{thm:generics} in the case of $\fgn=\fgl(n)$ in [KW1] (see Theorems 3.23 and 3.28) is different than the one outlined here.  This technique goes back to some preliminary work of Kostant and Wallach.  It is emphasized here, since it generalizes to describe less generic orbits of the group $A$ (see [Col1]), as well as the generic orthogonal case.  
 \end{rem}
 \subsection{ The generic orthogonal case}\label{s:orthogen}
In this section, we prove Theorem \ref{thm:generics} for $\fso(n)$.  We prove the theorem by constructing an algebraic isomorphism $\Gamma_{n}^{c}: (\Co^{\times})^{d}\to \fso(n)_{c}$ for $c\in\Omega_{n}$.  As in the case of $\fgl(n)$, we start by considering a problem in linear algebra.  Let $\fh_{2l}=\bigoplus_{i=1}^{l}\fso(2)$ be the standard Cartan subalgebra of block diagonal matrices in $\fso(2l)$.  For $\fso(2l+1)$, let $\fh_{2l+1}=\bigoplus_{i=1}^{l}\fso(2)\oplus \{0\}$, where $\{0\}$ is the $1\times 1$ $0$-matrix, denote the standard Cartan subalgebra.  Let $\fh^{reg}$ denote the regular elements in the Cartan $\fh$.  Suppose we are given an element in $\fso(i+1)$ of the form	\begin{equation}\label{eq:orthomatrix}
	  \left[\begin{array}{cc}
 h & \underline{z}\\
 -\underline{z}^{t} & 0\end{array}\right ]_{\mbox{\large ,}}
\end{equation}
where $h\in\fh_{i}^{reg}\subset\mathfrak{so}(i)$ is in the $W_{i}$ orbit determined by $c_{i}\in\fh_{i}^{reg}/W_{i}$ and $\underline{z}\in\Co^{i}$ is a column vector.  Suppose we are given $c_{i+1}\in\fh_{i+1}^{reg}/W_{i+1}$, a regular semisimple orbit whose elements have no eigenvalues in common with those of $c_{i}$.  The problem is to determine the value of $\underline{z}$ that forces the matrix in (\ref{eq:orthomatrix}) to lie in the orbit $c_{i+1}$.  

To avoid ambiguity, we need to choose a fundamental domain $\mathcal{D}_{i}$ for the action of the Weyl group $W_{i}$ on $\fh_{i}^{reg}$.  Although we will not make explicit use of it, we give an example of such a fundamental domain for completeness.  Let $\Phi(\fso(i), \fh_{i})=\Phi$ be a system of roots relative to the Cartan subalgebra $\fh_{i}$, and let $\Phi^{+}$ denote a choice of positive roots.  Given $z\in\mathfrak{h}_{i}$, we write $z=Re z+\imath \,Im z$ with $Re z \in(\fh_{i})_{\mathbb{R}}$ and $Im z\in (\fh_{i})_{\mathbb{R}}$.  It is a standard result that a fundamental domain for the action of $W_{i}$ on $\mathfrak{h}_{i}^{reg}$ is (see [CM])
\begin{equation}\label{eq:fundom}
\mathcal{D}_{i}=\{z\in \fh_{i}^{reg}| \alpha(Rez)\geq 0, \text{ if } \alpha(Re z)=0,\, \alpha(Im z)>0\text{ for all }\alpha\in\Phi^{+}\}.
\end{equation}

\begin{dfn}\label{d:orthosol}
Let $c_{i}\in\fh_{i}^{reg}/ W_{i}$ and $c_{i+1}\in \fh_{i+1}^{reg}/ W_{i+1}$ be regular semisimple adjoint orbits in $\fso(i)$ and $\fso(i+1)$ respectively whose elements contain no eigenvalues in common.  We define $\sol$ to be the set of matrices in $\fso(i+1)$ of the form (\ref{eq:orthomatrix}) which are in the regular semisimple orbit $c_{i+1}$ with $h\in \mathcal{D}_{i}$.  We refer to $\sol$ as the (generic) orthogonal solution variety at level $i$.    
\end{dfn}  

There are now two types different types of solution varieties depending on the type of $\fso(i+1)$.  The more subtle case is when $\fso(i+1)=\fso(2l+2)$ is of type $D_{l+1}$.  We will carefully study the geometry of $\sol$ in this case in subsection \ref{s:dlsol}.  In subsection \ref{s:blsol}, we will sketch the analogous results for when $\fso(i+1)=\fso(2l+1)$ is of type $B_{l}$.  This last case is dealt with similarly to the one in \ref{s:dlsol}.  In subsection \ref{s:proof}, we use the results about the varieties $\sol$ to construct the isomorphism $\Gamma_{n}^{c}$ and prove Theorem \ref{thm:generics}.  

\subsubsection{Solution varieties in type $D_{l+1}$.} \label{s:dlsol}
Let $c_{2l+1}\in\fh_{2l+1}/ W_{2l+1}$ and $c_{2l+2}\in\fh_{2l+2}/ W_{2l+2}$ be regular semisimple orbits whose elements have no eigenvalues in common.  We now show that $\Xi^{2l+1}_{c_{2l+1}, c_{2l+2}}$ is non-empty.  Let $h\in\mathcal{D}_{2l+1}\cap c_{2l+1}$ with $\mathcal{D}_{2l+1}$ as in (\ref{eq:fundom}).  Suppose $h=\oplus_{i=1}^{l} a_{i} J+\{0\}$ with $J=\left[ \begin{array}{cc} 
0 & 1\\
-1 & 0\end{array}\right]
$ and $\{0\}$ denoting the $1\times 1$ $0$-matrix, with $a_{i}\neq (+/-) a_{j}$ for $j\neq i$ and $a_{i}\neq 0$ for all $i$. Suppose that the orbit $c_{2l+2}$ consists of elements with characteristic polynomial 
\begin{equation}\label{eq:Dchar}
\prod_{i=1}^{l+1} (t^{2}+b_{i}^{2})
\end{equation} 
and Pfaffian either 
\begin{equation}\label{eq:Pfaffs}
\prod_{i=1}^{l+1} b_{i} \text{ or } -\prod_{i=1}^{l+1} b_{i}.
\end{equation}

We consider elements of $\fso(2l+2)$ of the form 
\begin{equation}\label{eq:bd}
X=\left[\begin{array}{ccccccccc}
   0 & a_{1}&  & & & & &  &z_{11}\\
	-a_{1}& 0 &   &  & & & & & z_{12}\\
	 &  &0 & a_{2} & &  & & &z_{21}   \\
	&  & -a_{2}& 0 & &  & & &z_{22}\\
	& &  &  & \ddots &   & & &\vdots   \\
	&  &  & &  &  0 &a_{l}& &z_{l1} \\
	  &  & &  &  & -a_{l} & 0 &  &z_{l2}\\
	  & & & & & & & 0 & z_{l+1}\\
	 -z_{11}& -z_{12}& -z_{21}& -z_{22}& \cdots & -z_{l1} &-z_{l2}& -z_{l+1} & 0\end{array}\right]_{\mbox{\large ,}}
	  \end{equation}  
as in (\ref{eq:orthomatrix}).  We need to show that the coordinates $z_{i,j}$ and $z_{l+1}$ can be chosen to give $X$ characteristic polynomial (\ref{eq:Dchar}) and either choice of Pfaffian in (\ref{eq:Pfaffs}).  Then $X\in c_{2l+2}$, since it takes the same values on the fundamental $\Ad$-invariants as an element of $c_{2l+2}$.  We begin by computing the characteristic polynomial of $X$.   

\begin{lem}\label{l:Dchar}
The characteristic polynomial of the matrix in (\ref{eq:bd}) is
\begin{equation}\label{eq:dpoly}
\sum_{i=1}^{l}\left((z_{i1}^{2}+z_{i2}^{2})\,t^{2}\,\left[\prod_{j=1,\,j\neq i}^{l}(t^{2}+a_{j}^{2})\right]\right)+z_{l+1}^{2}\prod_{i=1}^{l}(t^{2}+a_{i}^{2})+t^{2}\prod_{i=1}^{l}(t^{2}+a_{i}^{2}).
\end{equation}
\end{lem}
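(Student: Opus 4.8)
The plan is to compute the characteristic polynomial $\det(tI_{2l+2}-X)$ by a bordered-determinant (Schur complement) argument rather than by a brute-force cofactor expansion. First I would reorganize $X$ according to the direct sum decomposition $\Co^{2l+2}=\Co^{2l}\oplus\Co\oplus\Co$, in which $\Co^{2l}$ carries the block $H=\bigoplus_{i=1}^{l}a_{i}J$, the middle $\Co$ is the $(2l+1)$-st coordinate line, and the last $\Co$ is the $(2l+2)$-nd coordinate line. Writing $\underline{w}=(z_{11},z_{12},\dots,z_{l1},z_{l2})^{t}\in\Co^{2l}$, one then has
\[
tI-X=\begin{bmatrix} tI_{2l}-H & 0 & -\underline{w}\\ 0 & t & -z_{l+1}\\ \underline{w}^{t} & z_{l+1} & t\end{bmatrix}.
\]
Let $B=\mathrm{diag}(tI_{2l}-H,\,t)$ denote the top-left $(2l+1)\times(2l+1)$ block; it is block diagonal with $2\times2$ blocks $tI_{2}-a_{i}J$ and a trailing scalar block $t$, so $\det B=t\prod_{i=1}^{l}(t^{2}+a_{i}^{2})$, while the first $2l+1$ entries of the last row and the last column of $tI-X$ form $u^{t}$ and $-u$ respectively, where $u=(z_{11},\dots,z_{l2},z_{l+1})^{t}$.

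Next I would apply the identity $\det\begin{bmatrix}B & -u\\ u^{t} & t\end{bmatrix}=t\det B+u^{t}\,\mathrm{adj}(B)\,u$, read either as a polynomial identity in $t$ and the parameters or, after passing to the fraction field $\Co(t)$ in which $B$ is invertible, as the Schur complement formula $\det B\cdot(t+u^{t}B^{-1}u)$. Since $B$ is block diagonal, $u^{t}B^{-1}u=\sum_{i=1}^{l}\underline{w}_{i}^{t}(tI_{2}-a_{i}J)^{-1}\underline{w}_{i}+z_{l+1}^{2}/t$ with $\underline{w}_{i}=(z_{i1},z_{i2})^{t}$, and using $(tI_{2}-a_{i}J)^{-1}=(t^{2}+a_{i}^{2})^{-1}\begin{bmatrix}t & a_{i}\\ -a_{i} & t\end{bmatrix}$ a one-line computation gives $\underline{w}_{i}^{t}(tI_{2}-a_{i}J)^{-1}\underline{w}_{i}=t(z_{i1}^{2}+z_{i2}^{2})/(t^{2}+a_{i}^{2})$. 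Here the cross terms $\pm a_{i}z_{i1}z_{i2}$ cancel, which is precisely what removes every odd power of the $a_{i}$ from the final answer.

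Finally I would multiply out
\[
\det(tI-X)=t\prod_{i=1}^{l}(t^{2}+a_{i}^{2})\Big(t+\sum_{i=1}^{l}\tfrac{t(z_{i1}^{2}+z_{i2}^{2})}{t^{2}+a_{i}^{2}}+\tfrac{z_{l+1}^{2}}{t}\Big),
\]
distributing the product over the three summands: the first contributes $t^{2}\prod_{i}(t^{2}+a_{i}^{2})$, the second contributes $\sum_{i}(z_{i1}^{2}+z_{i2}^{2})\,t^{2}\prod_{j\neq i}(t^{2}+a_{j}^{2})$, and the third contributes $z_{l+1}^{2}\prod_{i}(t^{2}+a_{i}^{2})$. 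Collecting these three terms yields exactly (\ref{eq:dpoly}). A route that avoids rational functions altogether is a Laplace expansion of $\det(tI-X)$ first along row and column $2l+1$ (which peels off the $z_{l+1}$ term and leaves a $\fgl(n)$-type bordered determinant of the kind already met in section \ref{s:glngenerics}) and then along the last row and column; either way the whole computation reduces to the $2\times2$ inversion above. Accordingly I do not expect a genuine obstacle here: the only points needing care are the legitimacy of the rational-function manipulation (handled cleanly by the adjugate form, which requires no invertibility) and the bookkeeping of signs in the three border interactions.
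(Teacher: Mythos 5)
Your proposal is correct and follows essentially the same route as the paper: both compute $\det(tI-X)$ by the Schur complement with respect to the top-left $(2l+1)\times(2l+1)$ block $tI_{2l+1}-h$, invert that block-diagonal matrix blockwise, observe that the cross terms $\pm a_i z_{i1}z_{i2}$ cancel to give $\underline{w}_i^t(tI_2-a_iJ)^{-1}\underline{w}_i=t(z_{i1}^2+z_{i2}^2)/(t^2+a_i^2)$, and clear denominators. The only cosmetic differences are that you phrase the decomposition as $\Co^{2l}\oplus\Co\oplus\Co$ and note that the adjugate form sidesteps the passage to $\Co(t)$, but the computation is the same.
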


\begin{proof}
 
We want to compute $\det (t-X)$.  We compute the determinant using the Schur complement determinant formula (see [HJ, pgs 21-22]).  In the notation of that reference $\alpha=\{1,\cdots, n-1\}$ and $\alpha^{\prime}=\{n\}$.  The Schur complement formula gives
\begin{equation}\label{eq:Schur}
\det(t-X)=\det(t-h)\,[t+\underline{z}^{T}\; (t-h)^{-1}\;\underline{z}],
\end{equation}
where $\underline{z}^{T}=[z_{11}, z_{12}, \cdots, z_{l1}, z_{l2}, z_{l+1}]$ is a row vector (see (\ref{eq:bd})).  We compute that $(t-h)^{-1}$ is the block diagonal matrix
\begin{equation}\label{eq:adjmatrix}
\bigoplus_{j=1}^{l}\, \frac{1}{ (t^{2}+a_{j}^{2})} \left[\begin{array}{cc} t & a_{j}\\
												                                                             -a_{j} & t \end{array}\right] \oplus \frac{1}{t}.
\end{equation}
We then compute that $t+\underline{z}^{T} (t-h)^{-1}\underline{z}$ is
\begin{equation}\label{eq:adj}
t+\sum_{j=1}^{l} \frac{t}{t^{2}+a_{j}^{2}}(z_{j1}^{2}+z_{j2}^{2})+ \frac{z_{l+1}^{2}}{t}.
\end{equation}
We see easily that $\det(t-h)=t\prod_{i=1}^{l} (t^{2}+a_{i}^{2})$.  Multiplying (\ref{eq:adj}) by $\det(t-h)$ yields (\ref{eq:dpoly}).  


\end{proof}

By Lemma \ref{l:Dchar} the matrix in (\ref{eq:bd}) has characteristic polynomial (\ref{eq:Dchar}) if and only if  

\begin{equation}\label{eq:cond1}
	  z_{i1}^{2}+z_{i2}^{2}=\frac{\prod_{j=1}^{l+1}(b_{j}^{2}-a_{i}^{2})}{-(a_{i}^{2})\prod_{j=1,\,j\neq i}^{l}(a_{j}^{2}-a_{i}^{2})} \text  { for } 1\leq i\leq l \text{ and }
	  \end{equation}

\begin{equation}\label{eq:cond3}
z_{l+1}^{2}=\frac{\prod_{i=1}^{l+1} b_{i}^{2}}{\prod_{i=1}^{l}a_{i}^{2}}.
\end{equation}
(These conditions are obtained by substituting the $2l+1$ distinct eigenvalues of $h$ into both (\ref{eq:dpoly}) and (\ref{eq:Dchar}) and equating the results.)  The right hand sides of (\ref{eq:cond1}) and (\ref{eq:cond3}) are defined precisely because of our assumption that $h$ is regular.   Moreover, the right hand sides of both (\ref{eq:cond1}) and (\ref{eq:cond3}) are non-zero because $h$ does not share any eigenvalues with elements of $c_{2l+2}$.  We write (\ref{eq:cond1}) as
\begin{equation}\label{eq:cond2}
(z_{i1}^{2}+z_{i2}^{2})=d_{i}
\end{equation}
with $d_{i}\neq 0$, $$d_{i}=\frac{\prod_{j=1}^{l+1}(b_{j}^{2}-a_{i}^{2})}{-(a_{i}^{2})\prod_{j=1,\,j\neq i}^{l}(a_{j}^{2}-a_{i}^{2})}.$$  Observe that $d_{i}$ only depends upon the values of the $a_{j}$ and the $b_{k}$.  
%
  We have some ambiguity in the choice of $z_{l+1}$.  It is exactly this ambiguity that allows us to prescribe the value of $z_{l+1}$ so that the Pfaffian of the matrix in (\ref{eq:bd}) can be made to be either expression in (\ref{eq:Pfaffs}).  Indeed, Lemma \ref{l:Dchar} gives that the determinant of $X$ is 
 $$
z_{l+1}^{2}\prod_{i=1}^{l}a_{i}^{2}.
$$
Therefore, the Pfaffian of $X$ is $(+/-)z_{l+1}\prod_{i=1}^{l}a_{i}$.   
Thus, we can choose the sign of $z_{l+1}$ in (\ref{eq:cond3}) so that the Pfaffian of the matrix (\ref{eq:bd}) is either $\prod_{i=1}^{l+1} b_{i}$ or $-\prod_{i=1}^{l+1} b_{i}$. Thus, the generic orthogonal solution variety $\soll$ is non-empty in this case. 



	 We now study the geometry of $\soll$ in more detail.    
	 We consider the equation (\ref{eq:cond2}) for $i=1,\cdots, l$.  We define a closed subvariety of $\Co^{2}$ 
$$ V_{i}=\{(z_{i1},\,z_{i2})\in\mathbb{C}^{2}| z_{i1}^{2}+z_{i2}^{2}=d_{i}\}.
$$ 
Let $SO(2)$ (thought of as $2\times 2$ complex orthogonal matrices) act on $\Co^{2}$ as linear transformations.  It is then easy to see that this action preserves $V_{i}\subset\Co^{2}$.  Moreover, there is a natural $SO(2)$-equivariant isomorphism $V_{i}\simeq SO(2)$, where we think of $SO(2)$ acting on itself by left translation.

Thus, $\soll$ is isomorphic to $SO(2)^{l}$.  The centralizer of $h$ in $SO(2l+1)$ acts on $\soll$ by conjugation.  Since $h$ is regular, $Z_{SO(2l+1)}(h)=SO(2)^{l}\times \{1\}$, where $\{1\}$ denotes the $1\times 1$ identity matrix.  The action of $Z_{SO(2l+1)}(h)$ on $\soll$ is the standard diagonal action of $SO(2)^{l}$ on the column vector $[z_{11}, z_{12},\cdots, z_{l1}, z_{l2}]^{T}$ (see (\ref{eq:bd})), and the dual action on the row vector\\ $-[z_{11}, z_{12},\cdots, z_{l1}, z_{l2}]$.  Under the identification $\soll\simeq SO(2)^{l}$ this action can be identified with the action of $SO(2)^{l}$ on itself by left translation.  Since this action of $Z_{SO(2l+1)}(h)$ is free, it follows that $Z_{SO(2l+1)}(h)$ acts simply transitively on $\soll$.  We have now proven the following theorem.  


\begin{thm}\label{thm:bdextension}
  Given any regular semisimple orbits $c_{2l+1}\in\fh_{2l+1}^{reg}/W_{2l+1}$ and $c_{2l+2}\in\fh_{2l+2}^{reg}/W_{2l+2}$ whose elements share no eigenvalues in common, the solution variety $\soll$ is non-empty and is a homogenuous space for a free, algebraic action of $Z_{SO(2l+1)}(h)= SO(2)^{l}$.  Moreover, any element in $\fso(2l+2)$ of the the form (\ref{eq:bd}) which is in the orbit $c_{2l+2}\in \fh_{2l+2}^{reg}/ W_{2l+2}$ is necessarily in $\soll$.  
	\end{thm}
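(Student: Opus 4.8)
The plan is to package the linear-algebra computation already set up above into the two assertions of the theorem. First I would establish non-emptiness. Fix $h=\oplus_{i=1}^{l}a_{i}J+\{0\}\in\mathcal{D}_{2l+1}\cap c_{2l+1}$ and consider a general matrix $X$ of the form (\ref{eq:bd}). By Lemma \ref{l:Dchar} the characteristic polynomial of $X$ is (\ref{eq:dpoly}); substituting the $2l+1$ distinct eigenvalues of $h$ and equating with (\ref{eq:Dchar}) produces precisely the scalar equations (\ref{eq:cond1}) and (\ref{eq:cond3}). Regularity of $h$ makes the denominators there nonzero, and the hypothesis that $c_{2l+1}$ and $c_{2l+2}$ share no eigenvalue (so $b_{j}^{2}\neq a_{k}^{2}$ and $b_{j}\neq 0$) makes the numerators nonzero; hence the $d_{i}$ of (\ref{eq:cond2}) and the right side of (\ref{eq:cond3}) are well defined and nonzero, so solutions $(z_{i1},z_{i2})$ and $z_{l+1}$ exist. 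Since $\det X=z_{l+1}^{2}\prod_{i=1}^{l}a_{i}^{2}$ by the same lemma, $Pfaff(X)=\pm z_{l+1}\prod_{i=1}^{l}a_{i}$, and the two square roots $z_{l+1}$ of (\ref{eq:cond3}) realize both signs in (\ref{eq:Pfaffs}); choosing the sign matching the Pfaffian of $c_{2l+2}$, the matrix $X$ takes the same values as a representative of $c_{2l+2}$ on all fundamental $\Ad$-invariants, hence lies in $c_{2l+2}$. This gives $\soll\neq\emptyset$.

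Next I would identify the variety together with its symmetry. For each $i$ put $V_{i}=\{(z_{i1},z_{i2})\in\Co^{2}\mid z_{i1}^{2}+z_{i2}^{2}=d_{i}\}$; since $d_{i}\neq 0$ this smooth conic carries a simply transitive linear $SO(2)$-action, so $V_{i}\cong SO(2)$ $SO(2)$-equivariantly. The computation above shows a matrix of the form (\ref{eq:bd}) lies in $c_{2l+2}$ if and only if $(z_{i1},z_{i2})\in V_{i}$ for every $i$ and $z_{l+1}$ is the unique root of (\ref{eq:cond3}) giving the correct Pfaffian; therefore $\soll\cong V_{1}\times\cdots\times V_{l}\cong SO(2)^{l}$. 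As already noted, since $h$ is regular $Z_{SO(2l+1)}(h)=SO(2)^{l}\times\{1\}$, and conjugation by this group acts on the off-diagonal column $[z_{11},z_{12},\dots,z_{l1},z_{l2}]^{t}$ (and dually on the corresponding row) through the standard diagonal action of $SO(2)^{l}$, while fixing $h$ and $z_{l+1}$. Under the identification $\soll\cong SO(2)^{l}$ this is left translation, which is free and transitive, so $\soll$ is a homogeneous space for a free algebraic action of $Z_{SO(2l+1)}(h)$.

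Finally, the ``moreover'' clause follows from the same equivalence: any $X\in\fso(2l+2)$ of the form (\ref{eq:bd}) lying in $c_{2l+2}$ must satisfy (\ref{eq:cond1}), (\ref{eq:cond3}) and the Pfaffian sign condition, i.e.\ it is one of the matrices just described; since its $h$-block was chosen in $\mathcal{D}_{2l+1}\cap c_{2l+1}$, Definition \ref{d:orthosol} puts $X\in\soll$. The step I expect to require the most care is the Pfaffian bookkeeping --- confirming the factorization of $\det X$, checking that the two values of $z_{l+1}$ hit the two signs in (\ref{eq:Pfaffs}) and no more, and observing that pinning down that sign is exactly what cuts $\soll$ down to $SO(2)^{l}$ rather than $SO(2)^{l}\times\{\pm 1\}$. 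The identification $V_{i}\cong SO(2)$ and the translation of the conjugation action into left translation are then routine.
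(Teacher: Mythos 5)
Your argument is correct and follows essentially the same path as the paper: compute the characteristic polynomial via Lemma \ref{l:Dchar}, reduce to the scalar equations (\ref{eq:cond1}) and (\ref{eq:cond3}), use regularity of $h$ and disjointness of spectra to get $d_i\neq 0$, fix the sign of $z_{l+1}$ via the Pfaffian, identify each conic $V_i$ with $SO(2)$, and recognize the conjugation action of $Z_{SO(2l+1)}(h)=SO(2)^l$ as left translation. The one point you flag as needing care (that pinning the Pfaffian sign is what makes $\soll\cong SO(2)^l$ and not $SO(2)^l\times\{\pm 1\}$) is indeed the place where Definition \ref{d:orthosol} does the work, and your treatment of it matches the paper's.
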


\subsubsection{ Solution varieties in type $B_{l}$}\label{s:blsol} 
Let $c_{2l}\in\fh_{2l}^{reg}/ W_{2l}$ and $c_{2l+1}\in\fh_{2l+1}^{reg}/W_{2l+1}$ be regular semisimple orbits whose elements have no eigenvalues in common.
 Let $h\in \mathcal{D}_{2l}\cap c_{2l}\subset \fso(2l)$ be the block diagonal matrix $h=\oplus_{i=1}^{l} a_{i} J$ with $J\in\fso(2)$ as in the previous section.  We are forced to have $a_{i}\neq 0$ for any $i$ and that $a_{i}\neq (+/-) a_{j}$ for $i\neq j$.  For this choice of $h$ the matrix in (\ref{eq:orthomatrix}) can be written as
 \begin{equation}\label{eq:extension2}
Y=\left[\begin{array}{cccccccc}
   0 & a_{1}&  & & & & &z_{11}  \\
	-a_{1}& 0 &   &  & & & & z_{12}  \\
	 &  &0 & a_{2} & &  & &z_{21}   \\
	&  & -a_{2}& 0 & &  & & z_{22}\\
	& &  &  & \ddots &   & &\vdots   \\
	&  &  & &  &  0 &a_{l}&z_{l1}  \\
	  &  & &  &  & -a_{l} & 0 & z_{l2}\\
	 -z_{11}& -z_{12}& -z_{21}& -z_{22}& \cdots & -z_{l1} &-z_{l2}& 0\end{array}\right]_{\mbox{\large .}}
	  \end{equation}  

Suppose that the orbit $c_{2l+1}$ is the set of elements in $\fso(2l+1)$ with characteristic polynomial
\begin{equation}\label{eq:extend}
 t\,\prod_{i=1}^{l} (t^{2}+b_{i}^{2}),
\end{equation}
with $b_{i}\in\mathbb{C}$ satisfying the conditions that $b_{j}\neq (+/-) a_{k}$, $b_{i}\neq (+/-) b_{j}$, and $b_{i}\neq 0$.   We want to find values for the $z_{i,j}$ so that $Y$ has characteristic polynomial (\ref{eq:extend}).  The matrix $Y$ in (\ref{eq:extension2}) is then in the orbit $c_{2l+1}$.  
As in the previous case, we begin by computing the characteristic polynomial of $Y$.  The following lemma is an easy consequence of Lemma \ref{l:Dchar}.
\begin{lem}\label{l:Bchar}
 The characteristic polynomial of the matrix in (\ref{eq:extension2}) is 
\begin{equation}\label{eq:bpoly}
t\left[\prod_{i=1}^{l}(t^{2}+a_{i}^{2})+\sum_{i=1}^{l}(z_{i1}^{2}+z_{i2}^{2})\prod_{j=1,\,j\neq i}^{l} (t^{2}+a_{j}^{2})\right].
\end{equation}
\end{lem}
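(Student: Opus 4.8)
The plan is to derive the formula directly from Lemma~\ref{l:Dchar} by specialization, rather than redoing the determinant computation from scratch. The key observation I would make is that the $(2l+1)\times(2l+1)$ matrix $Y$ in (\ref{eq:extension2}) is exactly the $(2l+2)\times(2l+2)$ matrix $X$ in (\ref{eq:bd}) with $z_{l+1}$ set to $0$ and with the row and column through the $1\times 1$ zero block of $h$ deleted. Indeed, once $z_{l+1}=0$, that row and column of $X$ are identically zero, so up to a permutation of the standard basis one has $X|_{z_{l+1}=0}=Y\oplus\{0\}$, whence
\begin{equation*}
\det(t-X)\big|_{z_{l+1}=0}=t\cdot\det(t-Y).
\end{equation*}
I would then set $z_{l+1}=0$ in the formula (\ref{eq:dpoly}) of Lemma~\ref{l:Dchar}, factor out a $t$, and read off (\ref{eq:bpoly}).

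If a self-contained argument is preferred, I would instead repeat the Schur complement computation from the proof of Lemma~\ref{l:Dchar} with $h=\bigoplus_{i=1}^{l}a_{i}J$ now taken as the full $2l\times 2l$ block. Here
\begin{equation*}
(t-h)^{-1}=\bigoplus_{j=1}^{l}\frac{1}{t^{2}+a_{j}^{2}}\left[\begin{array}{cc} t & a_{j}\\ -a_{j} & t\end{array}\right],
\end{equation*}
the cross terms $z_{j1}z_{j2}$ cancel because the two off-diagonal entries of each block are $a_{j}$ and $-a_{j}$, so $\underline{z}^{T}(t-h)^{-1}\underline{z}=\sum_{j=1}^{l}\frac{t(z_{j1}^{2}+z_{j2}^{2})}{t^{2}+a_{j}^{2}}$; multiplying this (after adding $t$) by $\det(t-h)=\prod_{i=1}^{l}(t^{2}+a_{i}^{2})$ gives (\ref{eq:bpoly}) immediately.

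I do not expect a genuine obstacle here: the statement is a routine determinant identity, which is precisely why the paper calls it an easy consequence of Lemma~\ref{l:Dchar}. The only things needing a moment's care are the indexing involved in identifying $Y$ with the relevant principal submatrix of $X|_{z_{l+1}=0}$ --- equivalently, checking that killing $z_{l+1}$ strips off exactly one factor of $t$ from the characteristic polynomial of (\ref{eq:bd}) --- and tracking the signs of the $\pm a_{i}$ so that no spurious cross terms survive.
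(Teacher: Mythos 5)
Your proposal is correct, and your first argument (set $z_{l+1}=0$ in the matrix of (\ref{eq:bd}), observe that row and column $2l+1$ then vanish so $X|_{z_{l+1}=0}$ is $Y\oplus\{0\}$ up to a basis permutation, and divide one factor of $t$ out of (\ref{eq:dpoly})) is exactly the specialization the paper has in mind when it says the lemma is an ``easy consequence'' of Lemma~\ref{l:Dchar}. Your alternative self-contained Schur-complement computation is also correct and is structurally identical to the paper's proof of Lemma~\ref{l:Dchar}, with the cross terms $z_{j1}z_{j2}$ cancelling for the same reason as there.
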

\begin{rem}\label{r:pfaff}
Notice that the polynomial in (\ref{eq:bpoly}) is invariant under sign changes $a_{j}\to -a_{j}$.  Thus, the result of Lemma \ref{l:Bchar} is independent of the Pfaffian of the matrix $h$. 
\end{rem}

The polynomial in (\ref{eq:bpoly}) is equal to the one in (\ref{eq:extend}) if and only if
\begin{equation}\label{eq:more}
(z_{i1}^{2}+z_{i2}^{2})=\frac{\prod_{j=1}^{l}(b_{j}^{2}-a_{i}^{2})}{\prod_{j=1,\,j\neq i}^{l}(a_{j}^{2}-a_{i}^{2})}, 
\end{equation}
for $1\leq i\leq l$.
The right hand side of (\ref{eq:more}) is defined precisely because $h$ is regular.  It is also non-zero because $h$ shares no eigenvalues with elements in the orbit $c_{2l+1}$.   If we let 
\begin{equation}\label{eq:thing}
d_{i}=\frac{\prod_{j=1}^{l}(b_{j}^{2}-a_{i}^{2})}{\prod_{j=1,\,j\neq i}^{l}(a_{j}^{2}-a_{i}^{2})}
\end{equation}
(\ref{eq:more}) becomes,
\begin{equation}\label{eq:onemore}
z_{i1}^{2}+z_{i2}^{2}=d_{i},
\end{equation}
with $d_{i}\neq 0$.  As in the previous section, $d_{i}$ depends only on the values of the $a_{j}$ and $b_{k}$.  Comparing (\ref{eq:onemore}) with (\ref{eq:cond2}) and using $Z_{SO(2l)}(h)=\prod_{i=1}^{l} SO(2)$, we can argue as we did in subsection \ref{s:dlsol} to obtain the following theorem.
\begin{thm}\label{thm:extensionDB}
  Given any regular semisimple orbits $c_{2l}\in\fh_{2l}^{reg}/W_{2l}$ and $c_{2l+1}\in\fh_{2l+1}^{reg}/W_{2l+1}$ whose elements have no eigenvalues in common, the solution variety $\Xi^{2l}_{c_{2l}, c_{2l+1}}$ is non-empty and is a homogenuous space for a free, algebraic action of $Z_{SO(2l)}(h)= SO(2)^{l}$.   Moreover, any element in $\fso(2l+1)$ of the the form (\ref{eq:extension2}) which is in the orbit $c_{2l+1}$ is necessarily in $\Xi^{2l}_{c_{2l}, c_{2l+1}}$.\end{thm}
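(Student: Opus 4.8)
The plan is to mirror the argument of subsection~\ref{s:dlsol} that produced Theorem~\ref{thm:bdextension}, taking advantage of the fact that the type-$B_l$ case is in fact milder: $\fso(2l+1)$ has no Pfaffian, so the fundamental $\Ad$-invariants are just $tr(x^{2j})$, $1\le j\le l$, equivalently the characteristic polynomial, and there is no extra coordinate (the $z_{l+1}$ of the $D_{l+1}$ case) whose sign must be pinned down. Thus everything reduces to the single family of equations (\ref{eq:onemore}) already extracted from Lemma~\ref{l:Bchar}.

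First I would establish non-emptiness together with the ``moreover'' clause. By Lemma~\ref{l:Bchar}, a matrix $Y$ of the form (\ref{eq:extension2}) has characteristic polynomial (\ref{eq:extend}) if and only if its entries satisfy (\ref{eq:onemore}), i.e. $z_{i1}^{2}+z_{i2}^{2}=d_{i}$ for $1\le i\le l$, where $d_i$ is the scalar in (\ref{eq:thing}), nonzero because $h$ is regular and shares no eigenvalue with $c_{2l+1}$, and depending only on the $a_j$ and $b_k$. Over $\Co$ each conic $V_i=\{(z_{i1},z_{i2})\in\Co^2| z_{i1}^2+z_{i2}^2=d_i\}$ is nonempty, so such a $Y$ exists; its characteristic polynomial $t\prod_{i=1}^{l}(t^2+b_i^2)$ has the $2l+1$ distinct roots $0,\pm\imath b_1,\dots,\pm\imath b_l$ (using $b_i\neq 0$ and $b_i\neq\pm b_j$), so $Y$ is regular semisimple, and since it takes the same values on the fundamental $\Ad$-invariants as the elements of $c_{2l+1}$ it lies in that orbit. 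As $h=\oplus_{i=1}^{l}a_iJ$ was chosen in $\mathcal{D}_{2l}$, this gives $Y\in\Xi^{2l}_{c_{2l},c_{2l+1}}$, so the solution variety is nonempty; running the same computation backwards shows conversely that any $Y$ of the form (\ref{eq:extension2}) lying in $c_{2l+1}$ must satisfy (\ref{eq:onemore}), hence belongs to $\Xi^{2l}_{c_{2l},c_{2l+1}}$. In particular $\Xi^{2l}_{c_{2l},c_{2l+1}}$ is exactly the set of matrices (\ref{eq:extension2}) with $(z_{i1},z_{i2})\in V_i$ for all $i$, so $\Xi^{2l}_{c_{2l},c_{2l+1}}\simeq\prod_{i=1}^{l}V_i$.

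Finally I would put the torsor structure on this, exactly as in subsection~\ref{s:dlsol}. Since $h$ is regular, $Z_{SO(2l)}(h)=\prod_{i=1}^{l}SO(2)$, which I embed in $SO(2l+1)$ block-diagonally; it commutes with $h$, so conjugation by it preserves the shape (\ref{eq:extension2}), acting by the standard diagonal $SO(2)^l$-action on the column $[z_{11},z_{12},\dots,z_{l1},z_{l2}]^T$ and the dual action on the corresponding row, and it visibly preserves the orbit $c_{2l+1}$; hence it acts algebraically on $\Xi^{2l}_{c_{2l},c_{2l+1}}\simeq\prod_i V_i$. Because $d_i\neq 0$, each $V_i$ is $SO(2)$-equivariantly isomorphic to $SO(2)$ acting on itself by left translation (the same isomorphism used in the $D_{l+1}$ case, e.g. through the isotropic coordinates $z_{i1}\pm\imath z_{i2}$), so the product action of $Z_{SO(2l)}(h)=SO(2)^l$ on $\prod_i V_i\simeq SO(2)^l$ is simply transitive, which is the asserted homogeneous-space statement. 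I do not expect a genuine obstacle here: all the ingredients (Lemma~\ref{l:Bchar}, the nonvanishing of the $d_i$, the description of $Z_{SO(2l)}(h)$, and the $SO(2)$-structure of the conics $V_i$) are already at hand, and the only points needing a word of care are that the constructed $Y$ is genuinely regular semisimple --- forced by the distinctness of its $2l+1$ eigenvalues --- and that a regular semisimple element of $\fso(2l+1)$ is determined up to conjugacy by its characteristic polynomial, which is what licenses concluding $Y\in c_{2l+1}$ from Lemma~\ref{l:Bchar}.
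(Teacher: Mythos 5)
Your proposal is correct and follows essentially the same approach as the paper, which after obtaining (\ref{eq:onemore}) from Lemma~\ref{l:Bchar} simply defers to the argument of subsection~\ref{s:dlsol}. You correctly identify that the only simplification over the $D_{l+1}$ case is the absence of the Pfaffian and the $z_{l+1}$ coordinate, and the remaining ingredients you invoke (nonvanishing of the $d_{i}$, the $SO(2)$-torsor structure of the conics $V_{i}$, and the centralizer $Z_{SO(2l)}(h)\simeq SO(2)^{l}$ acting by conjugation) are exactly those the paper relies on.
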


\subsubsection{Proof of Theorem \ref{thm:generics} for $\fso(n)$. }\label{s:proof}

We can use our description of $\sol$ in Theorems \ref{thm:bdextension} and \ref{thm:extensionDB} to define a morphism 
$$
\Gamma_{n}^{c}: SO(2)^{d}\to\ofibre
$$
for $c\in\Omega_{n}$, as we did in the case of $\fg_{n}=\fgl(n)$.  However, there is one difficulty in this case that was not present in the generic case in $\fgl(n)$.  To define the map $\Gamma_{n}^{c}$, we must construct a morphism $\sol\to SO(i+1)$ which sends $z\to g_{i,i+1}(z)$, where $g_{i,i+1}(z)$ conjugates $z$ into the unique element in $c_{i+1}\cap \mathcal{D}_{i+1}$.  In the case of $\fgl(n)$, Wallach did this by explicit computation.  In this case, we have to be more indirect. 
Let $l=rank(\mathfrak{so}(i))=r_{i}$.  Let $h$ be the $i\times i$ cutoff of the matrices in (\ref{eq:bd}) and (\ref{eq:extension2}).  By Theorems \ref{thm:bdextension} and \ref{thm:extensionDB} we can identify $\sol\simeq SO(2)^{l}\simeq Z_{SO(i)}(h)$.  Under this identification the $SO(2)^{l}$ action on $\sol$ is identified with action of left translation of $SO(2)^{l}$ on itself.  To define $g_{i, i+1}(z)$, fix a choice of element $p_{i,i+1}\in SO(i+1)$ such that $\Ad(p_{i,i+1})Id_{i}\in\mathcal{D}_{i+1}\cap c_{i+1}$, where $Id_{i}\in SO(2)^{l}$ is the identity element.  

Let $z\in\Xi^{i}_{c_{i},\, c_{i+1}}$ be arbitrary.   Then 
\begin{equation}\label{eq:gmatrix}
g_{i,i+1}(z)=p_{i,i+1} z^{-1}
\end{equation}
 conjugates $z$ into $\mathcal{D}_{i+1}\cap c_{i+1}$ and the function 
\begin{equation}\label{eq:funmatrix}
z\to p_{i,i+1} z^{-1}
\end{equation}
is a morphism from $\Xi^{i}_{c_{i},\, c_{i+1}}$ to $SO(i+1)$.  

We can now define a morphism
	
	\begin{equation}\label{eq:orthodefn}
	\begin{array}{c}
\Gamma_{n}^{c}: SO(2)\times\cdots\times SO(2)^{r_{n-1}}\rightarrow \mathfrak{so}(n)_{c},\\
\\
\\
 \\
\Gamma_{n}^{c}(z_{2},\cdots, z_{n-1})=
\Ad(g_{2,3}(z_{2})^{-1} \cdots g_{n-2,n-1}(z_{n-2})^{-1}) z_{n-1}.\end{array}
\end{equation}
 \begin{rem}\label{r:image}
 To see that $\Gamma_{n}^{c}$ maps into $\ofibre$, note that for $Y\in Im\Gamma_{n}^{c}$, $Y_{i+1}\in \Ad(SO(i)) \cdot z_{i}$ for $3\leq i\leq n-1$.  Thus, $Y_{i+1}$ takes the same values on the fundamental $\Ad$-invariants for $\fso(i+1)$ as $z_{i}\in\sol$.  Note also that $Y_{3}=z_{2}\in\Xi^{2}_{c_{2}, c_{3}}$.  
 \end{rem}
The existence of this mapping gives us that $\fso(n)_{c}$ is non-empty.  As in the case of $\fgl(n)$, we have the following theorem concerning the morphism $\Gamma_{n}^{c}$.

\begin{thm}\label{thm:ofibre}
Let $c\in\mathbb{C}^{d+r_{n}}\in\Omega_{n}$ and let $d=\frac{1}{2}\dim\orbx$, $\orbx\subset\fso(n)$ a regular adjoint orbit.  Then the fibre $\mathfrak{so}(n)_{c}$ is non-empty.  The morphism $\Gamma_{n}^{c}$ is an isomorphism of affine varieties.  Therefore, $\mathfrak{so}(n)_{c}$ is a smooth, irreducible affine variety of dimension $d$.  
\end{thm}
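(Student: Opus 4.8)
Everything reduces to showing that the morphism $\Gamma_{n}^{c}$ of (\ref{eq:orthodefn}) --- which is well defined and has image inside $\fso(n)_{c}$ by (\ref{eq:funmatrix}) and Remark \ref{r:image} --- is an isomorphism of affine varieties onto $\fso(n)_{c}$. Granting this, non-emptiness of $\fso(n)_{c}$ is immediate since the source $SO(2)\times\cdots\times SO(2)^{r_{n-1}}$ is non-empty, and since $SO(2,\Co)\cong\Co^{\times}$ that source is a smooth, irreducible, affine variety of dimension $\sum_{i=2}^{n-1}r_{i}=d$ (by (\ref{eq:countd}), with $r_{1}=0$), so $\fso(n)_{c}$ inherits all three properties. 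I would prove the isomorphism statement by induction on $n$. The base case $n=3$ is immediate: because $\fso(1)=0$ and $\fso(2)$ is abelian, the $2$-cutoff of every element of $\fso(3)_{c}$ must equal the unique point of the orbit $c_{2}$, so that $\fso(3)_{c}=\Xi^{2}_{c_{2},c_{3}}$ and $\Gamma_{3}^{c}$ is the identity map of this solution variety.

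For the inductive step, write $c'=(c_{1},\dots,c_{n-1})\in\Omega_{n-1}$ and $u=g_{n-2,n-1}(z_{n-2})\,g_{n-3,n-2}(z_{n-3})\cdots g_{2,3}(z_{2})\in SO(n-1)$. The crucial observation is the cutoff identity $Y_{n-1}=\Gamma_{n-1}^{c'}(z_{2},\dots,z_{n-2})$ for $Y=\Gamma_{n}^{c}(z_{2},\dots,z_{n-1})$: indeed $g_{n-2,n-1}(z_{n-2})$ sends $z_{n-2}$ to the standard element $h_{n-1}\in\mathcal{D}_{n-1}\cap c_{n-1}$, which is exactly the $(n-1)$-cutoff of $z_{n-1}\in\Xi^{n-1}_{c_{n-1},c_{n}}$, and conjugation by $SO(k)$ respects cutoffs of size $\leq k$. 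Injectivity of $\Gamma_{n}^{c}$ then follows: passing to $(n-1)$-cutoffs and invoking the inductive hypothesis determines $z_{2},\dots,z_{n-2}$, after which $z_{n-1}$ is determined by $\Gamma_{n}^{c}(z_{2},\dots,z_{n-1})=\Ad(u^{-1})\,z_{n-1}$. For surjectivity, given $Y\in\fso(n)_{c}$ the cutoff $Y_{n-1}$ lies in $\fso(n-1)_{c'}$ (since $(Y_{n-1})_{k}=Y_{k}$ for $k\leq n-1$), so by induction $Y_{n-1}=\Gamma_{n-1}^{c'}(z_{2},\dots,z_{n-2})$ for a unique $(z_{2},\dots,z_{n-2})$; set $z_{n-1}:=\Ad(u)\,Y$. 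One computes $(z_{n-1})_{n-1}=\Ad(u)\,Y_{n-1}=h_{n-1}\in\mathcal{D}_{n-1}$, so $z_{n-1}$ has the block form (\ref{eq:bd}) (type $D_{l+1}$) or (\ref{eq:extension2}) (type $B_{l}$), while $z_{n-1}=\Ad(u)\,Y$ lies in the orbit $c_{n}$; hence $z_{n-1}\in\Xi^{n-1}_{c_{n-1},c_{n}}$ by Theorem \ref{thm:bdextension} (resp. Theorem \ref{thm:extensionDB}), and $\Gamma_{n}^{c}(z_{2},\dots,z_{n-1})=\Ad(u^{-1})\,z_{n-1}=Y$. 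Finally, the resulting inverse $Y\mapsto(z_{2},\dots,z_{n-1})$ is assembled from the inductively given morphism $(\Gamma_{n-1}^{c'})^{-1}$, the morphisms $z\mapsto g_{i,i+1}(z)=p_{i,i+1}z^{-1}$ of (\ref{eq:funmatrix}), conjugation, and passage to cutoffs --- all morphisms of affine varieties --- so $\Gamma_{n}^{c}$ is an isomorphism of affine varieties.

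The heavy inputs --- non-emptiness and the simply transitive $Z_{SO(i)}(h)$-actions on the solution varieties (Theorems \ref{thm:bdextension} and \ref{thm:extensionDB}) and the morphisms $g_{i,i+1}$ --- are already established, so the step I expect to be most delicate is the bookkeeping with the nested embeddings $SO(3)\subset SO(4)\subset\cdots\subset SO(n)$ that underlies both the cutoff identity $Y_{n-1}=\Gamma_{n-1}^{c'}(z_{2},\dots,z_{n-2})$ and the computation $(z_{n-1})_{n-1}=h_{n-1}$, together with running the $B_{l}$ and $D_{l+1}$ cases in parallel when applying Theorems \ref{thm:bdextension} and \ref{thm:extensionDB} (the $D$ case additionally using the freedom in the sign of the Pfaffian analyzed in Subsection \ref{s:dlsol}). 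Transporting the structure of $SO(2)\times\cdots\times SO(2)^{r_{n-1}}\cong(\Co^{\times})^{d}$ across $\Gamma_{n}^{c}$ then gives that $\fso(n)_{c}$ is a non-empty, smooth, irreducible affine variety of dimension $d$, completing the proof.
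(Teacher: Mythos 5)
Your proof is correct and follows essentially the same strategy as the paper: both construct the inverse of $\Gamma_{n}^{c}$ by successively stripping off cutoffs and using the morphisms $g_{i,i+1}$ of (\ref{eq:funmatrix}) together with Theorems \ref{thm:bdextension} and \ref{thm:extensionDB}. The only difference is presentational --- you package the argument as an induction on $n$ resting on the cutoff identity $Y_{n-1}=\Gamma_{n-1}^{c'}(z_{2},\ldots,z_{n-2})$, whereas the paper constructs the inverse $\Psi=(\psi_{2},\ldots,\psi_{n-1})$ directly by an internal recursion on the index $j$; the underlying computations are the same.
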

\begin{proof}
We show the map $\Gamma_{n}^{c}$ is an isomorphism by explicitly constructing an inverse.  Specifically, we show that there exist morphisms $\psi_{i}: \ofibre\to SO(2)^{r_{i}}$ for $2\leq i\leq n-1$ so that the morphisim defined by 
\begin{equation}\label{eq:inverse}
\Psi=(\psi_{2},\cdots , \psi_{n-1}): \ofibre\to SO(2)\times\cdots\times SO(2)^{r_{n-1}}  
\end{equation}
is an inverse to $\Gamma_{n}^{c}$.  The morphisms $\psi_{i}$ are constructed inductively.  

Given $x\in\mathfrak{so}(n)_{c},\, x_{3}\in\Xi^{2}_{c_{2},\, c_{3}}$.   We then define $\psi_{2}(x)=x_{3}$.  By (\ref{eq:gmatrix}) the element $g_{2,3}(\psi_{2}(x))\in SO(3)$ which conjugates $x_{3}$ into $c_{3}\cap \mathcal{D}_{3}$ depends regularly on $\psi_{2}(x)$ and thus on $x$.  Thus, the map 
$$
x\to (\Ad(g_{2,3}(\psi_{2}(x)))\cdot x)_{4}\in \Xi^{3}_{c_{3},\, c_{4}}
$$
is a morphism.  This defines $\psi_{3}(x)=(\Ad(g_{2,3}(\psi_{2}(x)))\cdot x)_{4}$.  Now, suppose that we have defined morphisms $\psi_{2},\cdots, \psi_{m}$ for $2\leq m\leq j-1$, with $\psi_{m}:\ofibre\to SO(2)^{r_{m}}$.  Then the $(j+1)\times (j+1)$ cutoff of the matrix 
$$
 \Ad( g_{j-1,j}(\psi_{j-1}(x)))\cdots \Ad(g_{2,3}(\psi_{2}(x))) \cdot x.
$$
is in the solution variety at level $j$, $\Xi^{j}_{c_{j},\, c_{j+1}}$.  (The elements $g_{m, m+1}(\psi_{m}(x))$ are defined by (\ref{eq:gmatrix}).)  Thus, the map $\psi_{j}:\ofibre\to SO(2)^{r_{j}}$, 
\begin{equation}\label{eq:psij}
\psi_{j}(x)= [ \Ad( g_{j-1,j}(\psi_{j-1}(x)))\cdots \Ad(g_{2,3}(\psi_{2}(x))) \cdot x]_{j+1}
\end{equation}
is a morphism.  

Now, we need to see that the map $\Psi$ is an inverse to $\Gamma_{n}^{c}$.  We first show that $\Gamma_{n}^{c}(\psi_{2}(x),\cdots ,\psi_{n-1}(x))=x$.  Consider equation (\ref{eq:psij}) with $j=n-1$,  $$\psi_{n-1}(x)=  \Ad( g_{n-2, n-1}(\psi_{n-2}(x)))\cdots \Ad(g_{2,3}(\psi_{2}(x))) \cdot x.$$  Now, using the definition of $\Gamma_{n}^{c}$ in (\ref{eq:orthodefn}), it is clear that $\Gamma_{n}^{c}(\psi_{2}(x),\cdots ,\psi_{n-1}(x))=x$. 

Finally, we show $\Psi(\Gamma_{n}^{c}(z_{2},\cdots, z_{n-1}))=(z_{2},\cdots , z_{n-1})$.  Consider the element
$$
\Ad(g_{j,j+1}(z_{j})^{-1}
 \cdots g_{n-2,n-1}(z_{n-2})^{-1}) z_{n-1},
$$
for $2\leq j\leq n-2$.  The $(j+1)\times (j+1)$ cutoff of this element is equal to $z_{j}\in\Xi^{j}_{c_{j},\, c_{j+1}}$.  Using this fact with $j=2$, we see $\psi_{2}(x)=z_{2}$.  We work inductively, as we did in defining the map $\Psi$.  We assume $\psi_{2}(x)=z_{2}, \cdots , \psi_{m}(x)=z_{m}$ for $2\leq m\leq j-1$.  By the definition of $\psi_{j}$ in (\ref{eq:psij}), $\psi_{j}(x)=[\Ad(g_{j,j+1}(z_{j})^{-1}\cdots g_{n-2,n-1}(z_{n-2})^{-1}) z_{n-1}]_{j+1}=z_{j}$.  From which we obtain easily $\psi_{n-1}(x)=z_{n-1}$.  Thus, we obtain $\Psi\circ\Gamma_{n}^{c}=id$.  This completes the proof that $\Gamma_{n}^{c}$ is an algebraic isomorphism.

\end{proof}

\begin{rem} 
Note that as an algebraic group, $SO(2)^{d}\simeq(\Co^{\times})^{d}$.  Thus, Theorem \ref{thm:ofibre} is the orthogonal analogue of Theorem 3.23 in [KW1].
\end{rem}

\begin{rem}
In the case of $\fso(n)$, it is not automatic that the fibre $\mathfrak{so}_{c}(n)$ is non-empty for $c\in\mathbb{C}^{d+r_{n}}\in\Omega_{n}$.  In the case of $\fgl(n)$, we know that all fibres are non-empty for $c\in\mathbb{C}^{{n+1\choose 2}}$ because the moment map in (\ref{eq:map}) is surjective by Theorem 2.3 [KW1]. 
\end{rem}

We can use the isomorphism $\Gamma_{n}^{c}$ to define an algebraic action of $SO(2)^{d}=(\Co^{\times})^{d}$ on $\ofibre$. Let $g=(z_{2}^{\prime},\cdots, z_{n-1}^{\prime})\in SO(2)^{d}$.  For $x\in\ofibre$, suppose $\Psi(x)=(\Gamma_{n}^{c})^{-1}(x)=(z_{2},\cdots, z_{n-1})$, then 
\begin{equation}\label{eq:oaction}
g\cdot x=\Gamma_{n}^{c}(z_{2}^{\prime} z_{2},\cdots, z_{n-1}^{\prime} z_{n-1}
).
\end{equation}
 	The above action of $SO(2)^{d}$ on $\mathfrak{so}(n)_{c}$ is a simply transitive algebraic group action on $\mathfrak{so}(n)_{c}$.   

 Using the definition of the matrices $g_{j, j+1}(z_{j})$ in (\ref{eq:gmatrix}) it is easy to see
  \begin{equation}\label{eq:actcomp}
 g(z_{j}^{\prime} z_{j})=g(z_{j}) (z_{j}^{\prime})^{-1}.
 \end{equation}
 By (\ref{eq:actcomp}) $(z_{2}^{\prime},\cdots , z_{n-1}^{\prime})\cdot x$ can be written as
 
 \begin{equation}\label{eq:oeasyact}
 \Ad(z_{2}^{\prime}g_{2,3}(z_{2})^{-1}z_{3}^{\prime}g_{3,4}(z_{3})^{-1}
      \cdots g_{n-2,n-1}^{-1}(z_{n-2})^{-1} z_{n-1}^{\prime}
      g_{n-2,n-1}(z_{n-2}) \cdots g_{2,3}(z_{2})) \cdot x.\end{equation}
   Let $h_{i}\in c_{i}\cap\mathcal{D}_{i}$ for $2\leq i\leq n-1$ be the $i\times i$ cutoff of the matrix in (\ref{eq:orthomatrix}).  As in the case of $\fgl(n)$ (cf. (\ref{eq:cact})), this action of $(\Co^{\times})^{d}$ is a sequence of conjugations by $Z_{SO(i)}(h_{i})$ starting with $Z_{SO(n-1)}(h_{n-1})$.  From equation (\ref{eq:orbit}), we expect this action to have the same orbits as the action of $A$.  The only difference with this action is that we first conjugate the cutoff into the Cartan $\fh$ before performing the conjugation by its centralizer, which makes the action easier to understand.  We can now prove Theorem \ref{thm:generics} in the orthogonal case.  
        
\begin{proof}[Proof of Theorem \ref{thm:generics} for $\fg_{n}=\fso(n)$]
The first step is to see that $\ofibre^{sreg}=\ofibre$.  We will then obtain that $\ofibre$ is one $A$-orbit from Theorems \ref{thm:ofibre} and \ref{thm:cpts}.  To show $\ofibre^{sreg}=\ofibre$, we work analytically.  By Theorem \ref{thm:ofibre}, $\ofibre$ is a non-singular affine variety of dimension $d$.  Thus, it has the structure of an analytic submanifold of $\fso(n)$ of dimension $d$ and the map $\Gamma_{n}^{c}$ is a biholomorphism.  Thus, the action of $SO(2)^{d}$ in (\ref{eq:oeasyact}) is holomorphic.  To show $\ofibre^{sreg}=\ofibre$, we compute the tangent space $T_{x}(\ofibre)$ analytically and show that it is equal to $V_{x}$, $V_{x}$ as in (\ref{eq:odist}).  Since $\ofibre$ is one free orbit under the action of $SO(2)^{d}$ in (\ref{eq:oeasyact}), we can compute $T_{x}(\ofibre)$ by differentiating the action in (\ref{eq:oeasyact}) at the identity.  To differentiate this action of $SO(2)^{d}$, we use the following coordinates for $SO(2)$ in a neighbourhood of the identity.  Let   
   \begin{equation}\label{eq:sin}
  \underline{z}=\left[\begin{array}{cc}
   \cos(z) & \sin(z)\\
  -\sin(z)& \cos(z) \end{array}\right]_{\mbox{,}}
\end{equation}
for $z\in U\subset\mathbb{C}$, where $U$ is a neighbourhood of the origin in $\Co$.  

 
 We use as a basis for Lie($SO(2)$), $\frac{\partial}{\partial z}|_{z=0}$.  We recall that we are thinking of $SO(2)^{r_{i}}=Z_{SO(i)}(h_{i})$, with $h_{i}$ the $i\times i$ cutoff of the matrix in (\ref{eq:orthomatrix}) as in Theorems \ref{thm:bdextension} and \ref{thm:extensionDB}.  We represent an element $z_{i}$ in a neighbourhood of the identity in $SO(2)^{r_{i}}$ as $z_{i}=(\underline{z_{i1}}, \cdots, \underline{z_{ir_{i}}})$ with $\underline{z_{ij}}$ given by (\ref{eq:sin}) for $j=1,\cdots, r_{i}$.   Then as an element of $Z_{SO(i)}(h_{i})\subset SO(i)$, $z_{i}$ is block diagonal with $z_{i}=\prod_{j=1}^{r_{i}}\underline{z_{ij}}$, if $i$ is even and $z_{i}= \prod_{j=1}^{r_{i}}\underline{z_{ij}}\times \{1\}$, if $i$ is odd.  Let $z=(z_{1},\cdots, z_{n-1})\in SO(2)\times\cdots\times SO(2)^{r_{n-1}}=SO(2)^{d}$.
  
We compute
$$
\frac{\partial}{\partial z_{ij}^{\prime}} |_{z^{\prime}=0}\,\Ad(z_{2}^{\prime}g_{2,3}^{-1}(z_{2})      \cdots g_{n-2,n-1}^{-1}(z_{n-2}) z_{n-1}^{\prime}\\
      g_{n-2,n-1}(z_{n-2}) \cdots g_{2,3}(z_{2})) \cdot x=
      $$
         \begin{equation}\label{eq:odiff}
      ad(g_{2,3}^{-1}(z_{2})\cdots g_{i-1,i}^{-1}(z_{i-1}) A_{ij} g_{i-1,i}(z_{i-1})\cdots g_{2,3}(z_{2}))\cdot x,
      \end{equation}
for $2\leq i\leq n-1$, $1\leq j\leq r_{i}$.  Here $A_{ij}\in\mathfrak{so}(i)\hookrightarrow\mathfrak{so}(n)$ is a block diagonal matrix with the $j$-th block given by the $2\times 2$ matrix $J=\left[\begin{array}{cc} 
0 &1\\
-1 & 0\end{array}\right]
$ and all other blocks $0$.  Let $\gamma_{i}=g_{i-1,i}(z_{i-1})\cdots g_{3,4}(z_{3}) g_{2,3}(z_{2})\in SO(i)$.  Then equation (\ref{eq:odiff}) implies
\begin{equation}\label{eq:otangent}
T_{x}(\ofibre)=span\{\partial_{x}^{[\gamma_{i}^{-1}A_{ij} \gamma_{i}, x]}|\, 2\leq i\leq n-1,\, 1\leq j\leq r_{i}\}.
\end{equation}
The element $\gamma_{i} \in SO(i)$ conjugates $x_{i}$ into $h_{i}\in\mathcal{D}_{i}\subset\fh_{i}^{reg}$, $\mathcal{D}_{i}$ as in (\ref{eq:fundom}).  Clearly, $\fz_{\fso(i)}(h_{i})$ has basis given by the matrices $A_{ij}$ for $1\leq j\leq r_{i}$.  Hence, the elements $\gamma_{i}^{-1}A_{ij} \gamma_{i}$, $1\leq j\leq r_{i}$ form a basis for $\fz_{\fso(i)}(x_{i})$.  Thus, (\ref{eq:otangent}) gives
$$
T_{x}(\mathfrak{so}(n)_{c})=span\{\partial_{x}^{[\fz_{\fso(i)}(x_{i}), x]} |\,2\leq i\leq n-1\}.  
$$

Now, for $x\in\fso(n)_{\Omega}$ we claim that this is the subspace $V_{x}$.  Indeed, recall equation (\ref{eq:odist}) 
$$
 V_{x}=span\{\partial_{x}^{[\nabla f_{i,j}(x_{i}),x]}|\, 2\leq i\leq n-1, 1\leq j\leq r_{i}\},
$$
where $f_{i,j}$, $1\leq j\leq r_{i}$ generate the ring of Ad-invariant polynomials on $\mathfrak{so}(i)$.  For $x\in\fso(n)_{\Omega}$, $x_{i}$ is regular for all $i$, and therefore the elements $\nabla f_{i,j}(x_{i}),\, 1\leq j\leq r_{i}$ form a basis for the centralizer of $x_{i}$ by Theorem \ref{thm:reg} for any $i$.  Thus, we have 

\begin{equation}\label{eq:integral}
T_{x}(\mathfrak{so}(n)_{c})=span\{\partial_{x}^{[\fz_{\fso(i)}, x]}|\, 2\leq i\leq n-1\}=V_{x}
\end{equation}
It follows from Theorem \ref{thm:ofibre} that $\dim T_{x}(\mathfrak{so}(n)_{c})=\dim \ofibre=\dim SO(2)^{d}=d$.  Thus, for $x\in\mathfrak{so}(n)_{c},\, \dim V_{x}=d$.  Thus, $\ofibre^{sreg}=\ofibre$ for $c\in\Omega_{n}$ by (\ref{eq:sreg}).
  

By Theorem \ref{thm:ofibre} $\ofibre=\ofibre^{sreg}$ is irreducible for $c\in\Omega_{n}$.  It follows immediately from Theorem \ref{thm:cpts} that $\ofibre$ is one $A$-orbit.  The last statement of the theorem follows from (\ref{eq:oaction}). 
\end{proof}


 

 \section{Summary of $A$-orbit structure of $\fgl(n)^{sreg}$}\label{s:summary}
 In this section, we briefly summarize without proof the main results of [Col1], which describe the $A$-orbit structure of all strongly regular elements in $\fgl(n)$.  For complete proofs, we refer the reader to [Col1] or [Col].  In section \ref{ss:moment}, we remarked that for any $c\in\Co^{d+r_{n}}=\Co^{\frac{n(n+1)}{2}}$, $\glsfibre$ is non-empty by Theorem 2.3 in [KW1].  In [Col1], we describe the $A$-orbit structure of $\fgl(n)^{sreg}$ by describing the action of $A$ on the fibres $\glsfibre$ for any $c\in \Co^{\frac{n(n+1)}{2}}$.  
 
	To state the main result, it is more convenient to use a different version of the moment map than the one given in (\ref{eq:map}).   Define a morphism $\Psi: \fgl(n)\to \Co^{\frac{n(n+1)}{2}}$ by 
\begin{equation}\label{eq:newmoment}
	\Psi(x)=(p_{1,1}(x_{1}), p_{2,1}(x_{2}),\cdots, p_{n,n}(x)),
	\end{equation}
	where $p_{i,j}(x_{i})$ is the coefficient of $t^{j-1}$ in the characteristic polynomial of $x_{i}$.  The collection of fibres of the map $\Psi$ is the same as that of the moment map $\Phi$ in (\ref{eq:map}).  It is also easy to see that the action of $A$ preserves the fibres of $\Psi$, and that Theorem \ref{thm:cpts} remains valid when the moment map $\Phi$ in (\ref{eq:map}) is replaced by the map $\Psi$.  For the remainder of the paper, we use the notation $\glfibre$ to denote $\Psi^{-1}(c)$ for $c\in \Co^{\frac{n(n+1)}{2}}$ and $\glsfibre$ to denote $\glfibre\cap\fgl(n)^{sreg}$. 
	
	To describe the fibres $\glfibre$, it is useful to adopt the following convention.  Let $c_{i}\in\C^{i}$ and consider $c=(c_{1}, c_{2},\cdots, c_{n})\in\C^{1}\times\C^{2}\times\cdots\times\C^{n}=\C^{\frac{n(n+1)}{2}}$.  Regard $c_{i}=(z_{1},\cdots, z_{i})$ as the coefficients of the degree $i$ monic polynomial 
 \begin{equation}\label{eq:polyci}
 p_{c_{i}}(t)=z_{1}+z_{2} t+\cdots + z_{i} t^{i-1} +t^{i}.
 \end{equation}
Then $x\in\glfibre$ if and only if $x_{i}$ has characteristic polynomial $p_{c_{i}}(t)$ for all $i$.  
 
The main result is the following, which differs substantially from the generic case studied in section \ref{s:generics}.
 
 \begin{thm}\label{thm:introgen}
 Suppose there are $0\leq j_{i}\leq i$ roots in common between the monic polynomials $p_{c_{i}}(t)$ and $p_{c_{i+1}}(t)$.  Then the number of $A$-orbits in $\glsfibre$ is exactly $2^{\sum_{i=1}^{n-1} j_{i}}.$  For $x\in\glsfibre$, let $Z_{GL(i)}(J_{i})$ denote the centralizer of the Jordan form $J_{i}$ of $x_{i}$ in $GL(i)$.  The orbits of $A$ on $\glsfibre$ are the orbits of a free algebraic action of the complex, commutative, connected algebraic group $Z=\Gprod$ on $\glsfibre$. 
\end{thm}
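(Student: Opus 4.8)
The plan is to recast the theorem as a non-generic elaboration of the solution-variety construction of Section~\ref{s:glngenerics}, using Theorem~\ref{thm:cpts} to convert a count of irreducible components into a count of $A$-orbits. First I would observe that if $x\in\glsfibre$ then each cutoff $x_i$ is regular by Proposition~\ref{prop:sreg}, so its Jordan form $J_i$ is a regular Jordan matrix with characteristic polynomial $p_{c_i}(t)$, depending only on $c$ and not on $x$; hence $Z=\Gprod$ is a well-defined abelian, connected (centralizers of regular elements in $GL(i)$ are connected by Kostant~[K]), $\binom{n}{2}=d$-dimensional complex algebraic group attached to $c$. Next, by Proposition~\ref{prop:constr} the $A$-orbit of $x$ is exactly $\{\,\Ad(g_1)\cdots\Ad(g_{n-1})\cdot x \mid g_i\in Z_{GL(i)}(x_i)\,\}$, so the substance of the theorem is (i) to promote the $x$-dependent groups $Z_{GL(i)}(x_i)$ to a single algebraic $Z$-action on all of $\glsfibre$ whose orbit through $x$ is $A\cdot x$, (ii) to prove that action free, and (iii) to count its orbits.

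For (i) I would build, level by level, the non-generic analogues of the solution varieties $\Xi^i_{c_i,c_{i+1}}$ and of the parametrizing map $\Gamma_n^c$. Fixing the regular Jordan form $J_i$ and the target characteristic polynomial $p_{c_{i+1}}(t)$, the local problem is to describe the bordered matrices $\bigl[\begin{smallmatrix}J_i&b\\ c^{t}&w\end{smallmatrix}\bigr]$ that are regular with characteristic polynomial $p_{c_{i+1}}(t)$, modulo conjugation by $Z_{GL(i)}(J_i)$. Decomposing $\Co^i$ into generalized eigenspaces of $J_i$, each eigenvalue $\lambda$ \emph{not} shared with $p_{c_{i+1}}(t)$ imposes a single non-vanishing pairing between the $\lambda$-components of $b$ and $c$, whereas each of the $j_i$ \emph{shared} eigenvalues (counted with the smaller of the two multiplicities, which by regularity are Jordan-block sizes) forces one of those two components to vanish together with a regularity constraint on the other — a binary datum, which for $n=2$ with a coincident eigenvalue is precisely the dichotomy ``$b=0,\ c\neq0$'' versus ``$c=0,\ b\neq0$'' of the $2\times2$ fibre. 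Running this through all levels $1\le i\le n-1$ yields, for each of the $2^{\sum_{i=1}^{n-1}j_i}$ choices $\tau$ of binary data, an explicit isomorphism $\Gamma^{c,\tau}\colon Z\xrightarrow{\ \sim\ }(\fgl(n)^{sreg})_{c,\tau}$ onto a locally closed, irreducible, $d$-dimensional piece of $\glsfibre$, obtained by successively conjugating the cutoffs into their Jordan forms exactly as $\Gamma_n^c$ conjugated them into diagonal form in Sections~\ref{s:glngenerics} and \ref{s:proof}. Transporting left translation on $Z$ through $\Gamma^{c,\tau}$ produces the required algebraic $Z$-action on that piece, and because each flow generating $A$ is conjugation by $\exp(-t\,\nabla f_{i,j}(x_i))\in Z_{GL(i)}(x_i)$ (cf.\ \eqref{eq:flows}, \eqref{eq:orbit}) while, conversely, $Z_{GL(i)}(x_i)$ is swept out by such exponentials since $x_i$ is regular (Theorem~\ref{thm:reg} and connectedness), the $Z$-orbit through $x$ coincides with $A\cdot x$.

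For (ii) I would argue freeness by induction on $n$ directly from strong regularity. If $g=(g_1,\dots,g_{n-1})\in Z$ fixes $x$, then taking $(n-1)\times(n-1)$ cutoffs and using $\Ad(g_{n-1})x_{n-1}=x_{n-1}$ shows that $(g_1,\dots,g_{n-2})$ fixes $x_{n-1}$, which lies in its own strongly regular fibre; the inductive hypothesis gives $g_1=\dots=g_{n-2}=1$, hence $g_{n-1}\in Z_{GL(n-1)}(x_{n-1})\cap Z_{GL(n)}(x)$, so its logarithm lies in $\fz_{\fg_{n-1}}(x_{n-1})\cap\fz_{\fg_{n}}(x)$, which is $0$ by condition~(b) of Proposition~\ref{prop:sreg}; thus $g_{n-1}=1$. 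Finally, since the binary datum of $x$ is intrinsic (the vanishing pattern of the eigenspace components of the borders is $Z_{GL(i)}(J_i)$-invariant), the pieces $(\fgl(n)^{sreg})_{c,\tau}$ are pairwise disjoint and exhaust $\glsfibre$; each is irreducible of dimension $d=\dim\glsfibre$, so they are exactly the irreducible components of $\glsfibre$, which by Theorem~\ref{thm:cpts} are exactly its $A$-orbits. Combined with (i) this identifies the $A$-orbits with the $Z$-orbits and gives their number as $2^{\sum_{i=1}^{n-1}j_i}$.

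The step I expect to be the genuine obstacle is the local eigenspace analysis in (i): proving that, up to $Z_{GL(i)}(J_i)$-conjugacy, the regular bordered extensions of $J_i$ with prescribed characteristic polynomial $p_{c_{i+1}}(t)$ form exactly $2^{j_i}$ torsors under $Z_{GL(i+1)}(J_{i+1})$, and that these binary labels are compatible along the tower so that they partition $\glsfibre$ into locally closed pieces of pure dimension $d$. This requires controlling, simultaneously, the Jordan structure of an eigenvalue shared between consecutive cutoffs, the regularity constraint on the extension, and the strong-regularity condition~(b); once it is in place, the freeness induction and the passage from components to $A$-orbits via Theorem~\ref{thm:cpts} are essentially formal.
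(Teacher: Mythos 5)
The paper does not actually prove Theorem~\ref{thm:introgen}: Section~\ref{s:summary} is an unproved summary of [Col1], so there is no in-paper argument to compare against.  The paper does tell us, in the remark closing subsection~\ref{s:glngenerics}, that the solution-variety parametrization $\Gamma_n^c$ is emphasized precisely because it ``generalizes to describe less generic orbits of the group $A$ (see [Col1]),'' and that is exactly the route you take: build level-by-level solution varieties, promote them to an isomorphism $\Gamma^{c,\tau}$ from a $d$-dimensional commutative group onto each irreducible component of $\glsfibre$, and convert components to $A$-orbits via Theorem~\ref{thm:cpts}.  So your overall strategy is the one the paper points to.

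However, there is a concrete error in the local analysis you identify as the crux.  You gloss $j_i$ as the number of shared eigenvalues ``counted with the smaller of the two multiplicities,'' i.e.\ as $\deg\gcd(p_{c_i},p_{c_{i+1}})$, and your eigenspace argument produces one binary datum per root so counted.  This is incompatible with Corollary~\ref{thm:nil}: for the nilfibre $p_{c_i}(t)=t^i$, the gcd-degree reading gives $\sum_i j_i=\binom{n}{2}$ and hence $2^{\binom{n}{2}}$ orbits, while the paper asserts $2^{n-1}$.  The correct reading is that $j_i$ is the number of \emph{distinct} common roots, and the bordered-Jordan-block analysis must yield a single binary datum per distinct shared eigenvalue, no matter how large the shared Jordan block is.  This is not a cosmetic slip: executed as written, your step~(i) would produce strictly more than $2^{\sum j_i}$ locally closed pieces whenever a shared eigenvalue has Jordan block size $\geq 2$ on both sides, and the whole count collapses.  (A direct computation for $J=\bigl(\begin{smallmatrix}0&1\\0&0\end{smallmatrix}\bigr)$ bordered to a regular nilpotent $3\times3$ matrix already shows the vanishing locus $\{c_1 b_2 = 0,\ c_1 b_1 + c_2 b_2 = 0\}$ intersected with the regularity and strong-regularity conditions decomposes into exactly two $Z_{GL(2)}(J)\times Z_{GL(3)}(J')$-torsors, not four.)

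A secondary point: your freeness argument in step~(ii) only shows the stabilizer of $x$ in $Z$ has trivial Lie algebra (via condition~(b) of Proposition~\ref{prop:sreg}), hence is discrete.  Since the torus factors of $\Gprod$ have plenty of finite subgroups, discreteness alone does not give freeness.  The mechanism that actually forces freeness is the one you already built into step~(i): once $\Gamma^{c,\tau}\colon Z\to(\fgl(n)^{sreg})_{c,\tau}$ is an honest isomorphism of varieties --- established, as in the paper's Theorem~\ref{thm:ofibre}, by writing down an inverse $\Psi$ by successively conjugating cutoffs into Jordan form --- the transported $Z$-action is left translation and hence tautologically free.  You should either drop the Lie-algebra induction or recast it as part of the proof that $\Gamma^{c,\tau}$ is injective, which is where it belongs.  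As it stands the separate ``freeness by induction'' step does not close.
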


\begin{rem}
 A very similar result is reached in recent work of Roger Bielawski and Victor Pidstrygach in [BP], which gives interesting geometric interpretations of the work in [KW1] and [KW2].  In [BP], the authors define an action of a group of symplectomorphisms on a space of rational maps of fixed degree from the Riemann sphere into the flag manifold for $GL(n+1)$.   For rational maps of a certain degree, this group is isomorphic to $\Co^{\frac{(n+1)(n)}{2}}$.  They then use information about the orbit structure of this group to get information about the orbit structure of $A$ on $\fgl(n)$.  They also obtain the result that there are $2^{\sum_{i=1}^{n-1} j_{i}}$ orbits in $\glsfibre$.  Our work differs from that of [BP] in that we explicitly list the $A$-orbits in $\glsfibre$ and obtain an algebraic action of $\Gprod$ on $\glsfibre$ whose orbits are the same as those of $A$.
\end{rem}

 The nilfibre $\glsnil$ has particularly interesting structure.  By definition of the map $\Psi$ in (\ref{eq:newmoment}), $x\in\glnil$ if and only if $x_{i}\in\fgl(i)$ is nilpotent for all $i$.  Such matrices have been extensively studied by [Ov] and [PS].  The $A$-orbit structure of $\glsnil$ is a special case of Theorem \ref{thm:introgen}.
 \begin{cor}\label{thm:nil}
On $\glsnil$ the orbits of the group $A$ are given by the orbits of a free algebraic action of the connected, abelian algebraic group $\Gprod\simeq (\Co^{\times})^{n-1}\times \Co^{{n\choose 2}-n+1}$.  There are exactly $2^{n-1}$ $A$-orbits in $\glsnil$.  
\end{cor}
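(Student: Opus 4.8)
The plan is to obtain the corollary as the specialization of Theorem~\ref{thm:introgen} to the nilfibre $c=0$. First I would record what the nilfibre is in terms of the data of Theorem~\ref{thm:introgen}: by the definition of $\Psi$ in (\ref{eq:newmoment}) and the convention (\ref{eq:polyci}), an element $x$ lies in $\glnil=\Psi^{-1}(0)$ exactly when $x_i$ is nilpotent for every $i$, i.e. when $p_{c_i}(t)=t^{i}$ for all $i$. For each $i$ the monic polynomials $p_{c_i}(t)=t^{i}$ and $p_{c_{i+1}}(t)=t^{i+1}$ have exactly one root in common, namely $0$; thus in the notation of Theorem~\ref{thm:introgen} we have $j_i=1$ for $1\le i\le n-1$, and the number of $A$-orbits in $\glsnil$ is $2^{\sum_{i=1}^{n-1}j_i}=2^{n-1}$.

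Next I would identify the algebraic group $Z=\Gprod$ that governs the $A$-action in Theorem~\ref{thm:introgen}. If $x\in\glsnil$ then $x$ is strongly regular, so by Proposition~\ref{prop:sreg} each cutoff $x_i$ is regular in $\fgl(i)$; being also nilpotent, $x_i$ is a regular nilpotent element, whence its Jordan form $J_i$ is a single Jordan block of size $i$. Consequently $Z_{GL(i)}(J_i)$ is the group of units of the commutative algebra $\Co[J_i]\simeq\Co[t]/(t^{i})$, which as an algebraic group is $\Co^{\times}\times\Co^{i-1}$ (the constant coefficient must be invertible, the remaining $i-1$ coefficients are arbitrary). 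Taking the product over $1\le i\le n-1$ and using $\sum_{i=1}^{n-1}(i-1)={n-1\choose 2}={n\choose 2}-(n-1)$ gives $Z\simeq(\Co^{\times})^{n-1}\times\Co^{{n\choose 2}-n+1}$, which is connected and abelian as a product of such groups.

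Finally, Theorem~\ref{thm:introgen} asserts that the $A$-orbits in $\glsnil$ are precisely the orbits of a free algebraic action of $Z$ on $\glsnil$; combining this with the two computations above yields every assertion of the corollary.

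I do not anticipate a genuine obstacle, since the statement is a direct specialization of Theorem~\ref{thm:introgen}, whose proof (carried out in [Col1]) is the substantive input. The only steps needing a moment's care are bookkeeping: checking that two adjacent nilpotent cutoffs share exactly one eigenvalue, so that $j_i=1$ for every $i$, and identifying the centralizer in $GL(i)$ of a regular nilpotent of $\fgl(i)$ with $\Co^{\times}\times\Co^{i-1}$.
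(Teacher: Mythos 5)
Your proposal is correct and follows exactly the route the paper intends: the corollary is obtained by specializing Theorem~\ref{thm:introgen} to the fibre over $c=0$, reading off $j_i=1$ from $p_{c_i}(t)=t^i$, and identifying $Z_{GL(i)}(J_i)$ with the unit group of $\Co[t]/(t^i)\simeq\Co^\times\times\Co^{i-1}$ since each $x_i$ is regular nilpotent. The bookkeeping $\sum_{i=1}^{n-1}(i-1)=\binom{n}{2}-(n-1)$ is right, so nothing is missing.
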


 $A$-orbits in $\glsnil$ determine a certain set of Borel subalgebras that contain the diagonal matrices.  Let $x\in\glsnil$, and let $A\cdot x$ denote its $A$-orbit.  Let $\overline{A\cdot x}$ denote either the Hausdorff or Zariski closure of $A\cdot x$.  These closures agree, since $A\cdot x$ is a constructible set by Proposition \ref{prop:constr} (see [M, pg 60]).  

\begin{thm}
Let $x\in\glsnil$.  Then $ \overline{A\cdot x}$ is the nilradical of a Borel subalgebra $\mathfrak{b}\subset \fgl(n)$ that contains the standard Cartan subalgebra of diagonal matrices in $\fgl(n)$.  
\end{thm}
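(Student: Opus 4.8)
The plan is to reduce the statement to the finitely many strongly regular $A$-orbits in $\glsnil$ counted by Corollary \ref{thm:nil}, exhibit a convenient representative of each, and compute the closure of its $A$-orbit directly. For a permutation $\pi=(\pi_1,\dots,\pi_n)$ of $\{1,\dots,n\}$ put $x^{\pi}=\sum_{k=1}^{n-1}E_{\pi_k,\pi_{k+1}}$ (the ``path matrix'' of $\pi$). If $\pi$ is obtained by inserting $2,3,\dots,n$ one at a time at one of the two ends of a growing word — there are exactly $2^{n-1}$ such $\pi$ — then each truncation $\pi^{(i)}$ of $\pi$ to $\{1,\dots,i\}$ is again such a built permutation (no index exceeding $i$ ever sits between two indices $\le i$ in $\pi$), the cutoff $x^{\pi}_i$ is the path matrix of $\pi^{(i)}$, a regular nilpotent of $\fgl(i)$, and a short check with centralizers shows $\fz_{\fgl(i)}(x^{\pi}_i)=\Co[x^{\pi}_i]$ meets $\fz_{\fgl(i+1)}(x^{\pi}_{i+1})=\Co[x^{\pi}_{i+1}]$ only in $0$; so $x^{\pi}\in\glsnil$. (These representatives are the degenerate limit of the ``solution variety'' picture of Theorem \ref{thm:introgen}: extending a regular nilpotent $x_i$ over the zero level set by $x_{i+1}=\left[\begin{smallmatrix}x_i & u\\ v^{T}&0\end{smallmatrix}\right]$ forces $v^{T}x_i^{k}u=0$ for $0\le k\le i-1$, and regularity splits this into the two families $v=0$ and $u=0$, i.e.\ attaching the new index to one of the two ends of the Jordan string.)

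The main claim is that $\overline{A\cdot x^{\pi}}=\mathfrak n_{\pi}:=\mathrm{span}\{E_{\pi_j,\pi_k}\mid 1\le j<k\le n\}$, which is the nilradical of the Borel $\mathfrak b_{\pi}=\dot{w}_{\pi}\,\mathfrak b^{+}\,\dot{w}_{\pi}^{-1}=\fh\oplus\mathfrak n_{\pi}$ ($w_{\pi}(k)=\pi_k$, $\fh$ the diagonal Cartan, which $\mathfrak b_{\pi}$ contains), so the theorem follows once this is established: the $\mathfrak n_{\pi}$ are pairwise distinct, hence the $A\cdot x^{\pi}$ are pairwise distinct, and by Corollary \ref{thm:nil} there are only $2^{n-1}$ $A$-orbits in $\glsnil$, so the $A\cdot x^{\pi}$ exhaust them. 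To prove the claim I would use Proposition \ref{prop:constr}: $A\cdot x^{\pi}=\Ad(Z_{G_1}(x^{\pi}_1))\cdots\Ad(Z_{G_{n-1}}(x^{\pi}_{n-1}))\cdot x^{\pi}$, so it suffices that each $\Ad(Z_{GL(i)}(x^{\pi}_i))$ stabilizes $\mathfrak n_{\pi}$. Since $\Co[x^{\pi}_i]\subseteq\Co\,\mathrm{Id}_i\oplus\mathfrak n_{\pi^{(i)}}$, the connected group $Z_{GL(i)}(x^{\pi}_i)$ lies in the Borel $B_{\pi^{(i)}}$ of $GL(i)\hookrightarrow GL(n)$; and under the $\fgl(i)$-block decomposition of $\fgl(n)$ the space $\mathfrak n_{\pi}$ splits as $\mathfrak n_{\pi^{(i)}}$ (in the $i\times i$ block), a sum of full columns and full rows of the off-diagonal blocks — here one uses that, by the end-insertion rule, all of $\{1,\dots,q-1\}$ lies on one side of $q$ in $\pi$, so for each ``outer'' column/row the corresponding part of $\mathfrak n_{\pi}$ is either full or empty — and a piece of the bottom-right block on which $GL(i)$ acts trivially; each summand is $\Ad(B_{\pi^{(i)}})$-stable. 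As $x^{\pi}\in\mathfrak n_{\pi}$ this gives $A\cdot x^{\pi}\subseteq\mathfrak n_{\pi}$; since $A\cdot x^{\pi}$ is irreducible of dimension $d=\binom n2=\dim\mathfrak n_{\pi}$ (Theorem \ref{thm:cpts}) and $\mathfrak n_{\pi}$ is irreducible, equality follows.

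The step I expect to be the main obstacle is the bookkeeping in the second paragraph together with the combinatorics underlying the first: identifying the path-matrix representatives, verifying they are strongly regular and pairwise non-conjugate, and establishing that precisely $2^{n-1}$ of the $n!$ Borels $\mathfrak b_w\supseteq\fh$ occur (the ``end-insertion'' $w$, not all of $S_n$) so that the count agrees with Corollary \ref{thm:nil}. The case $n=2$ is a clean check: $\glsnil$ consists of the nonzero scalar multiples of $E_{12}$ and of $E_{21}$, and their closures $\Co E_{12}$, $\Co E_{21}$ are the nilradicals of the two Borels of $\fgl(2)$ containing the diagonal torus. One can shorten parts of the argument by first observing that $\overline{A\cdot x}$ is an $\fh$-stable cone — the diagonal torus and the scalars preserve $\glsnil$ and normalize the $A$-action (sending $A$-orbits to $A$-orbits), hence fix each of the finitely many orbits — so that its linear span is automatically a sum of root spaces; it then remains only to identify that span with a $\binom n2$-dimensional $\mathfrak n_w$, which is again where the explicit computation enters.
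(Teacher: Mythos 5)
The paper itself gives no proof of this theorem --- Section 4 explicitly states that the results there are summarized without proof and defers to [Col1] and [Col] --- so there is no in-paper argument to compare against. Assessed on its own, your argument is correct and uses only what the paper actually proves: Proposition \ref{prop:constr} (the constructible description of $A\cdot x$ as a product of centralizer conjugations), Theorem \ref{thm:cpts} (each $A$-orbit in $\glsnil$ is irreducible of dimension $d=\binom n2$), and the count $2^{n-1}$ from Corollary \ref{thm:nil}. Your path matrices $x^{\pi}$ for the $2^{n-1}$ end-insertion permutations $\pi$ do lie in $\glsnil$, and the resulting $\overline{A\cdot x^{\pi}}=\mathfrak n_{\pi}$ are pairwise distinct nilradicals of Borels containing $\fh$; this is consistent with Example \ref{ex:nilexample}, where $\fm=\mathfrak n_{\pi}$ for $\pi=(3,2,1,4)$. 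Two remarks that would tighten the write-up. First, the ``short check'' of condition (b) of Proposition \ref{prop:sreg} for $x^{\pi}$ is worth recording: for a right-end insertion, $x^{\pi}_{i+1}=\left[\begin{smallmatrix}x^{\pi}_i&u\\0&0\end{smallmatrix}\right]$ with $u$ a cyclic vector for the regular nilpotent $x^{\pi}_i$; if $p(x^{\pi}_i)=q(x^{\pi}_{i+1})$ then the $(i+1,i+1)$ entry forces $q(0)=0$, the top-right $i\times 1$ block forces $\sum_{m\ge 0} b_{m+1}(x^{\pi}_i)^{m}u=0$ (where $q(t)=\sum b_m t^m$), and cyclicity of $u$ kills all $b_m$, hence also $p$; the left-end case is the transpose. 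Second, your block-by-block verification that $\Ad(Z_{GL(i)}(x^{\pi}_i))$ stabilizes $\mathfrak n_{\pi}$ can be bypassed entirely: since $\fz_{\fgl(i)}(x^{\pi}_i)=\Co[x^{\pi}_i]\subset \Co\,\mathrm{Id}_i\oplus\mathfrak n_{\pi^{(i)}}\subset \fh\oplus\mathfrak n_{\pi}$ and $Z_{GL(i)}(x^{\pi}_i)$ is connected, it sits inside the Borel subgroup $B_{\pi}$ of $GL(n)$ with Lie algebra $\fh\oplus\mathfrak n_{\pi}$, and a Borel normalizes its own nilradical.
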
 
\begin{rem}
The strictly lower traingular matrices $\mathfrak{n}^{-}$ and the strictly upper triangular matrices are closures of $A$-orbits in $\glsnil$.  
\end{rem}

Thus, $\overline{A\cdot x}$ is conjugate to $\mathfrak{n}^{-}$ by a unique element of the Weyl group $\sigma\in\mathcal{S}_{n}$, where $\mathcal{S}_{n}$ denotes the symmetric group on $n$ letters.  The permutation $\sigma$ is of the form $\sigma=\sigma_{1}\cdots \sigma_{n-1}$ with $\sigma_{i}=w_{0}^{i}\text{ or } id_{i}$, where $w_{0}^{i}$ is the long element of $\mathcal{S}_{i+1}$ and $id_{i}$ is the identity permutation in $\mathcal{S}_{i+1}$.  For a given nilradical $\overline{A\cdot x}$, $\sigma$ can be determined uniquely using the more detailed description of $\overline{A\cdot x}$ found in [Col1].  We illustrate this with an example.  

\begin{exam}\label{ex:nilexample}
 There is an $A$-orbit in $\fgl(4)_{0}^{sreg}$ whose closure is the nilradical

\begin{equation}\label{eq:nilexample}
\fm=\left[\begin{array}{cccc}
0 & 0 & 0 & a_{1}\\
a_{2} & 0 & 0 & a_{3}\\
a_{4} & a_{5} & 0 & a_{6}\\
0 & 0 & 0 &0\end{array}\right]_{\mbox{\large ,}}
\end{equation}
with $a_{i}\in\Co$ for $1\leq i\leq 6$.  For this example, it is easy to check that the permutation $\sigma=(13)(14)(23)=(1432)$, which is the product of the long elements of $\mathcal{S}_{3}$ and $\mathcal{S}_{4}$, conjugates the strictly lower triangular matrices in $\fgl(4)$ into $\fm$.  
\end{exam}

Theorem \ref{thm:introgen} lets us identify exactly where the action of the group $A$ is transitive on $\glsfibre$.  By Theorem \ref{thm:cpts}, this is equivalent to finding the values of $c\in\Co^{\frac{n(n+1)}{2}}$ for which $\glsfibre$ is irreducible.  Let $\Theta_{n}$ be the set of $c\in\mathbb{C}^{\frac{n(n+1)}{2}}$ such that the monic polynomials $p_{c_{i}}(t)$ and $p_{c_{i+1}}(t)$ are relatively prime (see (\ref{eq:polyci})).  From Remark 2.16 in [KW1], it follows that $\Theta_{n}\subset\Co^{\frac{n(n+1)}{2}}$ is Zariski principal open. 

\begin{cor}\label{c:generic}
The action of $A$ is transitive on $\glsfibre$ if and only if $c\in\Theta_{n}$. 
\end{cor}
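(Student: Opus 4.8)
The plan is to read off the corollary directly from Theorem \ref{thm:introgen} and Theorem \ref{thm:cpts}, so the argument is essentially a one-line counting statement. First I would note that Theorem \ref{thm:cpts} remains valid with the moment map $\Phi$ replaced by $\Psi$ (as remarked just before the statement of Theorem \ref{thm:introgen}), so the irreducible components of $\glsfibre$ are precisely the $A$-orbits it contains, and each has dimension $d$. Hence $A$ acts transitively on $\glsfibre$ if and only if $\glsfibre$ is a single $A$-orbit, i.e. if and only if the number of $A$-orbits in $\glsfibre$ equals $1$.

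Next, for $1\leq i\leq n-1$ let $j_i$, with $0\leq j_i\leq i$, be the number of common roots of the monic polynomials $p_{c_i}(t)$ and $p_{c_{i+1}}(t)$ of \eqref{eq:polyci}. By Theorem \ref{thm:introgen} the number of $A$-orbits in $\glsfibre$ is $2^{\sum_{i=1}^{n-1} j_i}$. Since each $j_i$ is a non-negative integer, $2^{\sum_{i=1}^{n-1} j_i}=1$ holds if and only if $\sum_{i=1}^{n-1} j_i=0$, equivalently $j_i=0$ for every $i$, equivalently $p_{c_i}(t)$ and $p_{c_{i+1}}(t)$ share no root for every $i$. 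By the definition of $\Theta_n$ this last condition is exactly $c\in\Theta_n$. Combining the two steps yields: $A$ is transitive on $\glsfibre$ $\Longleftrightarrow$ there is exactly one $A$-orbit $\Longleftrightarrow$ $2^{\sum_{i=1}^{n-1} j_i}=1$ $\Longleftrightarrow$ $c\in\Theta_n$.

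There is no substantive obstacle at this stage: all of the real content sits in Theorem \ref{thm:introgen}, whose proof is deferred to [Col1], and in Theorem \ref{thm:cpts}; granted these, the corollary is pure bookkeeping. The only point worth a remark is that one should know $\glsfibre\neq\emptyset$ so that transitivity is not a vacuous assertion, and this is immediate since $2^{\sum_{i=1}^{n-1} j_i}\geq 1$ (alternatively, by Theorem 2.3 in [KW1]).
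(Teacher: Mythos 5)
Your argument is correct and is exactly the route the paper has in mind: the paragraph preceding the corollary already states that by Theorem \ref{thm:cpts} (valid for $\Psi$ as noted) transitivity is equivalent to irreducibility of $\glsfibre$, and Theorem \ref{thm:introgen} gives the orbit count $2^{\sum j_i}$, which equals $1$ precisely when all $j_i=0$, i.e. $c\in\Theta_n$. Nothing to add.
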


  This allows us to find the maximal set of strongly regular elements for which the action of $A$ is transitive on the fibres of the map $\Psi$ in (\ref{eq:newmoment}) over this set.  We can describe the set as follows.  Let

$$
\fgl(n)_{\Theta}=\{x\in\fgl(n) | \;x_{i}\text{ is regular},\; \sigma(x_{i-1})\cap\sigma(x_{i})=\emptyset,\, 2\leq i\leq n-1\},
$$
where $\sigma(x_{i})$ denotes the spectrum of $x_{i}\in\fg_{i}$.  

\begin{thm}
Let $\Psi:\fgl(n)\to\Co^{\frac{n(n+1)}{2}}$ be the map defined in (\ref{eq:newmoment}).  Then $\Psi^{-1}(\Theta_{n})\cap\fgl(n)^{sreg}=\fgl(n)_{\Theta}$.  Thus, the elements of $\fgl(n)_{\Theta}$ are strongly regular.  Moreover, $\fgl(n)_{\Theta}$ is the maximal set of strongly regular elements for which the action of $A$ is transitive on the fibres of $\Psi$.
\end{thm}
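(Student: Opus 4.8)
The plan is to prove the set equality $\Psi^{-1}(\Theta_n)\cap\fgl(n)^{sreg}=\fgl(n)_\Theta$ first, and then read off the remaining two assertions. Two translations make the equality transparent. By the definition of $\Theta_n$, the condition $\Psi(x)\in\Theta_n$ says exactly that the characteristic polynomials of consecutive cutoffs $x_i$ and $x_{i+1}$ are relatively prime, i.e. $\sigma(x_i)\cap\sigma(x_{i+1})=\emptyset$; and by Proposition \ref{prop:sreg}, $x\in\fgl(n)^{sreg}$ iff every cutoff $x_i$ is regular and $\fz_{\fgl(i)}(x_i)\cap\fz_{\fgl(i+1)}(x_{i+1})=0$ for all $i$. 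Granting these, the inclusion $\subseteq$ is immediate: a strongly regular $x$ with $\Psi(x)\in\Theta_n$ has all cutoffs regular (Proposition \ref{prop:sreg}) and consecutive cutoffs with disjoint spectra, which is precisely the defining condition of $\fgl(n)_\Theta$. The content lies in the reverse inclusion, which reduces to the following lemma: \emph{if $x_i\in\fgl(i)$ is the $i\times i$ cutoff of $x_{i+1}\in\fgl(i+1)$, both $x_i$ and $x_{i+1}$ are regular, and $\sigma(x_i)\cap\sigma(x_{i+1})=\emptyset$, then $\fz_{\fgl(i)}(x_i)\cap\fz_{\fgl(i+1)}(x_{i+1})=0$.}

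To prove the lemma, write $x_{i+1}=\begin{pmatrix}x_i & v\\ w^{T}& a\end{pmatrix}$ with $v,w\in\Co^{i}$ and $a\in\Co$. The Schur complement formula gives $\det(t-x_{i+1})=(t-a)\det(t-x_i)-w^{T}\,\mathrm{adj}(t-x_i)\,v$, so evaluating at any $\mu\in\sigma(x_i)$ yields $\det(\mu-x_{i+1})=-\,w^{T}\,\mathrm{adj}(\mu-x_i)\,v$, which is nonzero since $\mu\notin\sigma(x_{i+1})$. As $x_i$ is regular, $\ker(\mu-x_i)$ is one-dimensional, so $\mathrm{adj}(\mu-x_i)$ has rank one; writing it as $\xi_\mu\eta_\mu^{T}$ with $\xi_\mu$ spanning $\ker(\mu-x_i)$ and $\eta_\mu$ spanning the left kernel of $\mu-x_i$, we obtain $(\eta_\mu^{T}v)(w^{T}\xi_\mu)\neq0$, hence $\eta_\mu^{T}v\neq0$ for every $\mu\in\sigma(x_i)$, equivalently $v\notin\mathrm{Im}(x_i-\mu)$ for every $\mu$; since $x_i$ is regular this says $v$ is a cyclic vector for $x_i$, so $\Co[x_i]v=\Co^{i}$. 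Now let $z=\begin{pmatrix}z'&0\\0&0\end{pmatrix}$ belong to $\fz_{\fgl(i)}(x_i)\cap\fz_{\fgl(i+1)}(x_{i+1})$; expanding $[z,x_{i+1}]=0$ in blocks gives $[z',x_i]=0$ and $z'v=0$, so $z'$ commutes with $x_i$ and annihilates $\Co[x_i]v=\Co^{i}$, forcing $z'=0$. This proves the lemma.

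Given the lemma, the reverse inclusion follows at once: for $x\in\fgl(n)_\Theta$, all cutoffs are regular and consecutive spectra are disjoint, hence $\Psi(x)\in\Theta_n$, and by Proposition \ref{prop:sreg} together with the lemma applied at each level $1\le i\le n-1$, $x\in\fgl(n)^{sreg}$. This establishes $\Psi^{-1}(\Theta_n)\cap\fgl(n)^{sreg}=\fgl(n)_\Theta$; in particular $\fgl(n)_\Theta\subseteq\fgl(n)^{sreg}$, which is the second assertion. For maximality, Corollary \ref{c:generic} says $A$ acts transitively on $\fgl(n)_c^{sreg}$ exactly when $c\in\Theta_n$, so the largest union of fibres of $\Psi$ (intersected with $\fgl(n)^{sreg}$) on which $A$ is transitive is $\bigcup_{c\in\Theta_n}\fgl(n)_c^{sreg}=\Psi^{-1}(\Theta_n)\cap\fgl(n)^{sreg}=\fgl(n)_\Theta$. (If one prefers to speak of the full fibres $\fgl(n)_c$, one adds that for $c\in\Theta_n$ the same Schur-complement computation shows inductively that $v$ is $x_i$-cyclic, whence $\Co[x_{i+1}]e_{i+1}=\Co\,e_{i+1}+\Co[x_i]v=\Co^{i+1}$ and each $x_{i+1}$ is regular, so $\fgl(n)_c=\fgl(n)_c^{sreg}$.)

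The one genuinely nonroutine ingredient is the linear-algebra step inside the lemma: the rank-one structure of $\mathrm{adj}(\mu-x_i)$ for a regular $x_i$, and the conclusion that disjointness of spectra forces the off-diagonal block $v$ to be a cyclic vector for $x_i$. Everything else is bookkeeping against Proposition \ref{prop:sreg} and Corollary \ref{c:generic}; the only point to watch is that the index ranges in the definitions of $\Theta_n$ and $\fgl(n)_\Theta$ be matched so that the pair $(x_{n-1},x_n)$ is accounted for on both sides.
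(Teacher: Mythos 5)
The paper states this theorem without proof, deferring to [Col1] and [Col] for the argument, so there is no in-paper proof to compare against; I will assess the proposal on its own terms.

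Your proof is correct. The translations are the right ones: $\Psi(x)\in\Theta_n$ means exactly that consecutive cutoffs have disjoint spectra, and Proposition \ref{prop:sreg} reduces strong regularity to every cutoff being regular plus trivial intersection of consecutive centralizers, so the content of the set equality is entirely in the lemma you isolate. Your Schur-complement computation is sound: for $\mu\in\sigma(x_i)$, $\det(\mu-x_{i+1})=-w^{T}\,\mathrm{adj}(\mu-x_i)\,v\neq 0$; regularity of $x_i$ forces $\mathrm{adj}(\mu-x_i)$ to have rank one with column space $\ker(\mu-x_i)$ and row space the left kernel, so $\eta_\mu^{T}v\neq 0$ for every $\mu$, i.e.\ $v\notin\mathrm{Im}(x_i-\mu)$ for every eigenvalue $\mu$, which for a regular $x_i$ is exactly cyclicity of $v$; and then a block computation of $[z,x_{i+1}]=0$ forces $z'$ to commute with $x_i$ and kill $v$, hence kill $\Co[x_i]v=\Co^{i}$, giving $z'=0$. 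The rest (maximality via Corollary \ref{c:generic}, and the parenthetical observation that for $c\in\Theta_n$ one in fact has $\fgl(n)_c=\fgl(n)_c^{sreg}$ by showing inductively that $e_{i+1}$ is cyclic for $x_{i+1}$) is correct bookkeeping. You are also right to flag the index-range ambiguity: the paper's displayed definition of $\fgl(n)_\Theta$ writes $2\le i\le n-1$, which omits the pair $(x_{n-1},x_n)$ and would make the stated equality false; the intended range is $2\le i\le n$ (matching the Section 3 definition of $\fgno$), as your argument requires.
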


\end{document}